\newtheorem{thm}{Theorem}[section]
\newtheorem{lemma}[thm]{Lemma}
\newtheorem{prop}[thm]{Proposition}
\newtheorem{cor}[thm]{Corollary}
\theoremstyle{definition}
\newtheorem{remark}{Remark}
\newcommand{\cA}{\mathcal{A}}
\newcommand{\cC}{\mathcal{C}}
\newcommand{\cL}{\mathcal{L}}
\newcommand{\cF}{\mathcal{F}}
\newcommand{\cE}{\mathcal{E}}
\newcommand{\cS}{\mathcal{S}}
\newcommand{\cT}{\mathcal{T}}
\newcommand{\cI}{\mathcal{I}}
\newcommand{\cW}{\mathcal{W}}
\newcommand{\N}{\mathbb{N}}
\newcommand{\R}{\mathbb{R}}
\newcommand{\Z}{\mathbb{Z}}
\newcommand{\E}{\mathbb{E}}
\renewcommand{\Pr}{\mathbb{P}}
\newcommand{\1}{\mathbf{1}}
\newcommand{\zero}{\mathbf{0}}
\DeclareMathOperator\diam{diam}
\DeclareMathOperator\dist{dist}
\newcommand{\iid}{i.i.d.}
\newcommand{\ffiid}{ffiid}
\newcommand{\fvffiid}{fv-ffiid}
\title{Finitary codings for spatial mixing Markov random fields}
\date{\today}
\author{Yinon Spinka}
\address{Tel Aviv University.
    School of Mathematical Sciences.
    Tel Aviv, 69978, Israel.}
    \email{yinonspi@post.tau.ac.il}
\thanks{Research supported by Israeli Science Foundation grant 861/15, the European Research Council starting grant 678520 (LocalOrder), and the Adams Fellowship Program of the Israel Academy of Sciences and Humanities}
\begin{document}

\begin{abstract}
	It has been shown by van den Berg and Steif~\cite{van1999existence} that the sub-critical and critical Ising model on~$\Z^d$ is a finitary factor of an \iid\ process (\ffiid), whereas the super-critical model is not. In fact, they showed that the latter is a general phenomenon in that a phase transition presents an obstruction for being \ffiid. The question remained whether this is the only such obstruction. We make progress on this, showing that certain spatial mixing conditions (notions of weak dependence on boundary conditions, not to be confused with other notions of mixing in ergodic theory) imply \ffiid.
	Our main result is that weak spatial mixing implies \ffiid\ with power-law tails for the coding radius, and that strong spatial mixing implies \ffiid\ with exponential tails for the coding radius. The weak spatial mixing condition can be relaxed to a condition which is satisfied by some critical two-dimensional models. Using a result of the author~\cite{spinka2018finitary}, we deduce that strong spatial mixing also implies \ffiid\ with stretched-exponential tails from a \emph{finite-valued} \iid\ process.
	
	We give several applications to models such as the Potts model, proper colorings, the hard-core model, the Widom--Rowlinson model and the beach model. For instance, for the ferromagnetic $q$-state Potts model on $\Z^d$ at inverse temperature~$\beta$, we show that it is \ffiid\ with exponential tails if $\beta$ is sufficiently small, it is \ffiid\ if $\beta < \beta_c(q,d)$, it is not \ffiid\ if $\beta > \beta_c(q,d)$ and, when $d=2$ and $\beta=\beta_c(q,d)$, it is \ffiid\ if and only if $q \le 4$.
\end{abstract}

\maketitle

\section{Introduction and Results}\label{sec:introduction}

Let $(S,\cS)$ and $(T,\cT)$ be two measurable spaces, and let $X=(X_v)_{v \in \Z^d}$ and $Y=(Y_v)_{v \in \Z^d}$ be $(S,\cS)$-valued and $(T,\cT)$-valued translation-invariant random fields (i.e., stationary $\Z^d$-processes) for some $d \ge 1$.
A \emph{coding} from $Y$ to $X$ is a measurable function $\varphi \colon T^{\Z^d} \to S^{\Z^d}$, which is \emph{translation-equivariant}, i.e., commutes with every translation of $\Z^d$, and which satisfies that $\varphi(Y)$ and $X$ are identical in distribution. Such a coding is also called a \emph{factor map} or \emph{homomorphism} from $Y$ to $X$, and when such a coding exists, we say that $X$ is a \emph{factor} of $Y$.

The \emph{coding radius} of $\varphi$ at a point $y \in T^{\Z^d}$, denoted by $R_\varphi(y)$, is the minimal integer $r \ge 0$ such that $\varphi(y')_\zero=\varphi(y)_\zero$ for all $y' \in T^{\Z^d}$ which coincide with $y$ on the ball of radius $r$ around the origin in the graph-distance, i.e., $y'_v=y_v$ for all $v \in \Z^d$ such that $\|v\|_1 \le r$. It may happen that no such~$r$ exists, in which case, $R_\varphi(y)=\infty$. Thus, associated to a coding is a random variable $R=R_\varphi(Y)$ which describes the coding radius. A coding is called \emph{finitary} if $R$ is almost surely finite. When there exists a finitary coding from $Y$ to $X$, we say that $X$ is a \emph{finitary factor} of $Y$. When $X$ is a finitary factor of an \iid\ (independent and identically distributed) process, we say it is \emph{\ffiid}. We say that a coding has \emph{exponential tails} if $\Pr(R \ge r) \le Ce^{-cr}$ for some $C,c>0$ and all $r>0$ and that it has \emph{power-law tails} if $\Pr(R \ge r) \le Cr^{-c}$ instead.

In this paper, we always take $S$ to be a non-empty finite set (and $\cS$ the power set of $S$) and $X$ to be a (finite-valued) Markov random field.
With the relevant definitions given below, we state our main result.

\begin{thm}\label{thm:main}
	Let $d \ge 2$ and let $X$ be a translation-invariant Markov random field on $\Z^d$.
	\begin{enumerate}[\quad~~(a)]
		\item\label{thm:main-a} If $X$ satisfies exponential weak spatial mixing, then it is \ffiid\ with power-law tails.
		\item\label{thm:main-b} If $X$ satisfies exponential strong spatial mixing, then it is \ffiid\ with exponential tails.
	\end{enumerate}
\end{thm}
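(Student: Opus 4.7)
The strategy is a multi-scale coupling-from-the-past argument: use the i.i.d.\ field $U=(U_v)_{v\in\Z^d}$ to simultaneously simulate $X$ inside a ball $B_r$ from every possible boundary condition on $\partial B_r$, and take the coding radius to be the first $r$ at which all boundary conditions yield the same value at the origin.

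For each $r\ge 1$, I would build (equivariantly, using only $U_{B_r}$) a \emph{grand coupling} $\Psi_r$ of the family of conditional laws $\{\mu^\tau_{B_r}:\tau\in S^{\partial B_r}\}$, so that each sample $\Psi_r(U,\tau)$ is distributed as $X_{B_r}$ given $X_{\partial B_r}=\tau$ while all $\tau$ share the same randomness $U_{B_r}$. One concrete realization is Propp--Wilson-style CFTP for the heat-bath Glauber dynamics on $B_r$ run from every initial configuration simultaneously; another is to process the sites of $B_r$ from $\partial B_r$ inward and, at each step, draw from a maximal coupling of the surviving conditional laws. Let
\[
R := \min\{r : \Psi_r(U,\tau)_\zero\text{ is the same for every }\tau\in S^{\partial B_r}\},
\]
define $\varphi(U)_\zero$ to equal that common value on $\{R<\infty\}$, and extend equivariantly to all of $\Z^d$. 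The equality in law $\varphi(U)\eqd X$ follows by conditioning on the actual boundary $\tau=X_{\partial B_r}$ and letting $r\to\infty$, since on $\{R\le r\}$ the origin coordinate $\Psi_r(U,\tau)_\zero$ is independent of $\tau$ and hence equals $X_\zero$ almost surely.

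The event $\{R>r\}$ forces two boundary conditions $\tau,\tau'$ to yield different values at the origin under $\Psi_r$, so $\Pr(R>r)$ is controlled by the coupling's disagreement-propagation probability together with combinatorial factors from the choice of $\tau,\tau'$. Under SSM the rate $e^{-c\,\dist(\zero,D)}$ with $D\subseteq\partial B_r$ the disagreement locus absorbs the $|\partial B_r|$ boundary sites via a site-by-site union bound and yields $\Pr(R>r)\le Ce^{-c'r}$, which is part \textit{(b)}. Under WSM only the uniform-in-boundary estimate $e^{-cr}$ is available, and the combinatorial factor $|S|^{|\partial B_r|}$ cannot be absorbed directly; to circumvent this I would instead iterate the construction over dyadic scales $r_k=2^k$, resampling the annular buffers $B_{r_{k+1}}\setminus B_{r_k}$ from fresh i.i.d.\ bits, and combine the per-scale success probabilities by a Borel--Cantelli argument to obtain power-law tails $\Pr(R>r)\le Cr^{-c}$, which is part \textit{(a)}.

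The main obstacle will be constructing the grand coupling $\Psi_r$ for a \emph{non-monotone} MRF so that its origin-coalescence probability is actually governed by the spatial-mixing rate, since the standard sandwiching available for monotone models (Ising, Potts via random-cluster) is unavailable here. One must engineer a block-update or maximal-coupling scheme that respects the Markov structure and still enjoys the WSM/SSM-type contraction, then combine across scales without losing the rate — with the WSM case requiring the dyadic resampling trick to avoid the exponential combinatorial penalty, which presumably also explains why the argument extends to the weaker mixing condition mentioned in the abstract.
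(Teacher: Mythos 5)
Your scheme---build a grand coupling $\Psi_r$ on each ball $B_r$ and output the common value at the first $r$ where all boundary conditions coalesce at the origin---shares the paper's spirit (grand couplings of the conditional laws $(P^\tau_V)_\tau$ plus a coalescence radius), but as written it has two genuine gaps.

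First, the correctness argument is not as automatic as you suggest. To deduce $\varphi(U)\eqd X$ from ``$\Psi_r(U,X_{\partial B_r})_\zero \eqd X_\zero$ for each $r$,'' you need the couplings to be \emph{nested} in $r$: once $\Psi_r(U,\cdot)_\zero$ becomes constant it must stay that same constant for all $r'>r$, otherwise evaluating at the $U$-measurable stopping radius $R$ can bias the output. Indeed, your sentence ``on $\{R\le r\}$ the origin coordinate $\Psi_r(U,\tau)_\zero$ is independent of $\tau$'' is simply false without such nesting, since $R\le r$ only says coalescence happened at \emph{some} $r'\le r$. Neither of your candidate realizations of $\Psi_r$---fresh finite-volume CFTP on $B_r$ for each $r$, or an independent maximal-coupling sweep from $\partial B_r$ inward---is nested by construction. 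The paper sidesteps this by doing coupling-from-the-past \emph{in time} on the infinite lattice: $\omega^n := f_1\circ\cdots\circ f_n$ obeys $\omega^{n+1}=\omega^n\circ f_{n+1}$, so $\omega^n(\cdot)_v$ stabilizes once it becomes constant, which is exactly the nesting needed for Propositions~\ref{prop:coupling-from-past} and~\ref{prop:coding-radius}.

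Second, and more seriously, your SSM branch (``site-by-site union bound absorbs the $|\partial B_r|$ boundary sites'') does not go through, and you have already put your finger on why in your closing paragraph. SSM controls the disagreement probability under an \emph{optimal pairwise} coupling of $P^\tau_V$ and $P^{\tau'}_V$, but a grand coupling of \emph{all} boundary conditions need not simultaneously induce near-optimal couplings of every pair, nor of the marginals on a given sub-region; Remark~\ref{rem:optimal-couplings} gives an explicit finite example where no grand coupling can. So one cannot quote SSM, union-bound over single-site boundary flips, and conclude that the grand coupling coalesces at the origin with probability $1-Ce^{-c'r}$. The real content of Theorem~\ref{thm:grand-coupling-exists} is a construction of a grand coupling that \emph{is} uniformly contracting against every disagreement set $A\subset\partial V$ simultaneously; it needs a ratio form of SSM (Proposition~\ref{prop:SSM-implies-ratio-SSM}) and a carefully ordered sequential coupling over the faces of a geometric decomposition of the annulus $V\setminus U$, chosen so that the damage caused by any one boundary disagreement propagates to only $O(r^d)$ vertices (Proposition~\ref{prop:grand-coupling-exists}). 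Supplying that construction is the main missing step. Your dyadic-scale WSM idea is closer in spirit to Section~\ref{sec:weak-mixing}, where one only needs a uniformly positive coalescence probability at each exponentially growing scale, but it too rests on the CFTP nesting from the first point.
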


Let $X=(X_v)_{v \in \Z^d}$ be a random field.
We say that $X$ is a \emph{Markov random field} if the conditional distribution of its values on any finite set $V \subset \Z^d$, given its values on $V^c$, depends only on its values on the boundary $\partial V$, i.e., if $(X_v)_{v \in V}$ and $(X_v)_{v \in V^c}$ are conditionally independent given $(X_v)_{v \in \partial V}$, where $\partial V$ is the set of vertices at distance 1 from~$V$.
We say that $X$ is \emph{translation-invariant} if its distribution is not affected by translations, i.e., if $(X_{v+u})_{v \in V}$ has the same distribution as $X$ for all $u \in \Z^d$.

Let $X$ be a Markov random field.
Denote the \emph{topological support} of $X$ by
\begin{equation}\label{eq:def-Omega}
\Omega := \Big\{ x \in S^{\Z^d} : \Pr(X_V=x_V)>0\text{ for all finite }V \subset \Z^d \Big\} ,
\end{equation}
where we write $x_V := (x_v)_{v \in V}$ for the restriction of a configuration $x \in S^{\Z^d}$ to $V$.
Thus, $\Omega$ is the set of ``feasible'' configurations.
Note that $\Omega$ is a closed subset of $S^{\Z^d}$ in the product topology.
When $X$ is translation-invariant, $\Omega$ is a \emph{shift space} (a closed invariant subset of $S^{\Z^d}$).
For $V \subset \Z^d$ and $\tau \in S^{\partial V}$, denote
\[ \Omega_V := \big\{ \omega_V : \omega \in \Omega \big\} \qquad\text{and}\qquad \Omega_V^\tau := \big\{ \omega_V : \omega \in \Omega,~\omega_{\partial V} = \tau \big\} .\]
The \emph{specification} of $X$ is the collection $P$ of conditional finite-dimensional distributions, namely,
\[ P:=(P^\tau_V)_{V\subset \Z^d\text{ finite},\,\tau \in \Omega_{\partial V}} \qquad\text{and}\qquad P^\tau_V := \Pr\big(X_V \in \cdot \mid X_{\partial V} = \tau\big).\]
Note that, for $V \subset \Z^d$ finite and $\tau \in \Omega_{\partial V}$, the measure $P^\tau_V$ is well-defined and supported on $\Omega^\tau_V$.
We write $P^\tau_v$ for $P^\tau_{\{v\}}$, and when $U \subset V$, we write $P^\tau_{V,U}$ for the marginal of $P^\tau_V$ on $U$.

Usually a specification $\bar{P}$ is defined in and of itself for all boundary conditions arising from some closed subset $\bar{\Omega}$ of $S^{\Z^d}$, and one then says that $X$ is a Gibbs measure for $\bar{P}$ if $\Omega \subset \bar\Omega$ and $P^\tau_V=\bar{P}^\tau_V$ for all finite $V \subset \Z^d$ and $\tau \in \Omega_{\partial V}$ (i.e., $P$ agrees with $\bar{P}$ for all feasible boundary conditions). This point of view will not be important for us and so we chose only to define the ``specification of $X$''.
We also remark that while the specification of a translation-invariant Markov random field is always translation-invariant (in the obvious sense), the converse is not true -- there exist non-translation-invariant Markov random fields whose specifications are translation-invariant (e.g., the Dobrushin state for the low-temperature Ising model in three dimensions).

A Markov random field and its specification are said to satisfy \emph{weak spatial mixing} with rate function $\rho$ if, for any finite sets $U \subset V \subset \Z^d$,
\begin{equation}\label{eq:WSM-def}
\big\|P^\tau_{V,U} - P^{\tau'}_{V,U} \big\| \le |U| \cdot \rho(\dist(U,\partial V)) \qquad\text{for any }\tau,\tau' \in \Omega_{\partial V},\end{equation}
where $\|\cdot\|$ denotes \emph{total-variation distance} (half the $L^1$ norm) and $\dist(A,B)$ is the minimum graph-distance between a vertex in $A$ and a vertex in $B$.
We use the term \emph{rate function} to refer to any monotone function $\rho \colon \{1,2,\dots\} \to [0,\infty)$ which tends to~0.
We remark that the precise definition of weak spatial mixing varies throughout the literature; our definition closely follows the one in~\cite{weitz2004mixing} (see also~\cite{martinelli1999lectures}).
It is easy to verify that if a specification satisfies weak spatial mixing (with any rate function), then there is a unique Markov random field having that specification (i.e., it has a unique Gibbs measure).
We say that a specification satisfies \emph{exponential weak spatial mixing} if it satisfies weak spatial mixing with an exponentially decaying rate function, i.e., if $\rho(n) \le Ce^{-cn}$ for some $C,c>0$ and all $n \ge 1$.

A Markov random field and its specification are said to satisfy \emph{strong spatial mixing} with rate function $\rho$ if, for any finite sets $U \subset V \subset \Z^d$,
\begin{equation}\label{eq:SSM-def}
\big\|P^\tau_{V,U} - P^{\tau'}_{V,U} \big\| \le |U| \cdot \rho(\dist(U,\Sigma_{\tau,\tau'})) \qquad\text{for any }\tau,\tau' \in \Omega_{\partial V},
\end{equation}
where $\Sigma_{\tau,\tau'}$ is the subset of $\partial V$ on which $\tau$ and $\tau'$ disagree. Evidently, \eqref{eq:SSM-def} implies~\eqref{eq:WSM-def}. We say that a specification satisfies \emph{exponential strong spatial mixing} if it satisfies strong spatial mixing with an exponentially decaying rate function. 
We mention that the definition of strong spatial mixing may vary in the literature, especially in the class of $(U,V)$ for which~\eqref{eq:SSM-def} is required to hold (for example, $V$ may be restricted to be a cube). Though we state the strongest form for simplicity (requiring~\eqref{eq:SSM-def} to hold for arbitrary $(U,V)$), the proof of part~\eqref{thm:main-b} of Theorem~\ref{thm:main} uses~\eqref{eq:SSM-def} only for a restricted class of $(U,V)$.
This is particularly relevant in two-dimensions as it allows us to prove \ffiid\ with exponential tails under a weaker assumption (see Section~\ref{sec:discussion}).

The assumption of exponential weak spatial mixing in part~\eqref{thm:main-a} of Theorem~\ref{thm:main} can be substantially relaxed.
For finite sets $U \subset V \subset \Z^d$, define
\begin{equation}\label{eq:def-gamma}
	\gamma(V,U) := \sum_{\omega \in \Omega_U} \min_{\tau \in \Omega_{\partial V}} P^\tau_{V,U}(\omega) .
\end{equation}
As we will see in Section~\ref{sec:couplings}, $\gamma(V,U)$ is the probability under an optimal coupling that configurations sampled in $V$ with every possible boundary condition all coincide on $U$.
For $r>0$, denote
\[ \Lambda_r := [-r,r]^d \cap \Z^d .\]

\begin{thm}\label{thm:ffiid}
	Let $X$ be a translation-invariant Markov random field.
	If
	\begin{equation}\label{eq:positive-coupling-prob2}
		\limsup_{n \to \infty} \gamma(\Lambda_n,\Lambda_{\delta n}) > 0 \qquad\text{for some }0<\delta<1,
	\end{equation}
	then $X$ is \ffiid. If
	\begin{equation}\label{eq:positive-coupling-prob}
		\liminf_{n \to \infty} \gamma(\Lambda_n,\Lambda_{\delta n}) > 0 \qquad\text{for some }0<\delta<1,
	\end{equation}
	then $X$ is \ffiid\ with power-law tails.
\end{thm}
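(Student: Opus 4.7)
The plan is a multi-scale maximal-coupling construction, using independent iid randomness at each scale.

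First, recall from~\eqref{eq:def-gamma} that $\gamma(V,U)$ is exactly the probability of agreement on $U$ under the optimal coupling of $\{P^\tau_V\}_{\tau \in \Omega_{\partial V}}$. I would realize this coupling as a measurable function of iid Uniform$[0,1]$ variables indexed by $V$, such that both the agreement event and, on that event, the common value on $U$, depend only on the iid randomness (not on any particular $\tau$). Concretely, the construction assigns to each $\omega \in \Omega_U$ a disjoint iid-measurable event of probability $\min_\tau P^\tau_{V,U}(\omega)$; their union is the agreement event, of total probability $\gamma(V,U)$.

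Next, fix an increasing sequence $(n_k)$ with $\gamma(\Lambda_{n_k}, \Lambda_{\delta n_k}) \ge c > 0$---under the liminf hypothesis~\eqref{eq:positive-coupling-prob} take $n_k = n_0 \cdot 2^k$, under the limsup hypothesis~\eqref{eq:positive-coupling-prob2} take any such subsequence. Use independent iid uniform randomness $(U^{(k)}_v)_{v \in \Z^d,\, k \ge 1}$, one layer per scale. At each vertex $v$ and scale $k$, run the maximal coupling on $V = v+\Lambda_{n_k}$, $U = v+\Lambda_{\delta n_k}$ using layer-$k$ randomness in $v+\Lambda_{n_k}$, and call the agreement event $A^{(k)}_v$. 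The events $\{A^{(k)}_v\}_{k \ge 1}$ are independent across $k$ (disjoint layers) and each has probability $\ge c$, so $K(v) := \min\{k : A^{(k)}_v\}$ is a.s.\ finite with $\Pr(K(v) > k) \le (1-c)^k$. Define $\varphi(U)_v$ to be the common value at $v$ in the scale-$K(v)$ coupling. Then $\varphi$ is translation-equivariant and finitary, with coding radius at most $n_{K(v)}$. Under~\eqref{eq:positive-coupling-prob}, the exponential choice $n_k = n_0 \cdot 2^k$ converts the geometric tail of $K(v)$ into the claimed power-law tail $\Pr(R_\varphi > r) \le C r^{-\alpha}$ with $\alpha = \log_2(1/(1-c))$.

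The hard part is verifying $\varphi(U) \sim X$. The $\gamma$ hypothesis implies uniqueness of the translation-invariant Gibbs measure for $P$: applying the maximal coupling at scale $n_k$ yields $\|\mu|_{\Lambda_{\delta n_k}} - \mu'|_{\Lambda_{\delta n_k}}\| \le 1-c$ for any two translation-invariant Gibbs measures $\mu,\mu'$, which combined with iteration across nested scales (under liminf) or an ergodicity argument (under limsup, using that distinct ergodic measures have finite-window TV distance tending to $1$) forces $\mu = \mu'$. Given uniqueness, it suffices to show that the law of $\varphi(U)$ is itself a Gibbs measure for $P$. The natural route is to construct a joint coupling of $X$ and $U$ under which, at each $v$, the scale-$K(v)$ maximal coupling is run against the true boundary $X|_{\partial(v + \Lambda_{n_{K(v)}})}$; the agreement property then forces the common value at $v$ to equal $X_v$ almost surely. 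Ensuring that this joint law is consistent across all vertices and all (random) scales simultaneously---given that the boxes $v + \Lambda_{n_{K(v)}}$ overlap in random ways---is the delicate technical step.
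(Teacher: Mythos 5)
There is a genuine gap, and it is located exactly at the step you flag as ``delicate'': the output of your construction does not have the law of $X$, and no joint coupling with $X$ can repair this, because already the marginal law of $\varphi(U)_{v+\Lambda_{\delta n_{K(v)}}}$ is wrong. Under the optimal coupling of $(P^\tau_{V,U})_{\tau \in \Omega_{\partial V}}$, the common configuration on $U$, \emph{conditioned on the agreement event}, is distributed proportionally to $\omega \mapsto \min_{\tau} P^\tau_{V,U}(\omega)$, i.e.\ it is the normalized minimum measure, not the Gibbs marginal $\Pr(X_U \in \cdot)$. Concretely: if you sample $X$ independently of the coupling randomness, then $\Pr(\text{common value}=\omega,\ \text{agreement}) = \min_\tau P^\tau_{V,U}(\omega)$, whereas $\Pr(\omega^{X_{\partial V}}_U = \omega) = \Pr(X_U=\omega)$; conditioning on agreement reweights toward configurations whose probability is insensitive to the boundary condition. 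So defining $\varphi(U)_v$ as ``the common value at the first successful scale'' produces a biased sample, and the bias does not vanish as $k \to \infty$ since $\gamma$ stays bounded away from $1$. (A secondary problem: at a fixed scale $k$, the boxes $v+\Lambda_{n_k}$ and $v'+\Lambda_{n_k}$ for nearby $v,v'$ overlap and share layer-$k$ randomness, so even if each marginal were correct, the joint law across vertices would be uncontrolled.)

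The missing idea is a mechanism that preserves the Gibbs distribution while the couplings do their work, which is what the paper supplies via coupling-from-the-past. In the paper's proof the agreement event of an optimal coupling on $\Lambda_{\ell_n}$ (optimal for the marginal on $\Lambda_{3\delta\ell_n}$) is used only as a \emph{certificate} that the backward composition $\omega^n = f_1\circ\cdots\circ f_n$ has ceased to depend on the starting configuration at the origin; the actual output value $\omega^{T_\zero}(\cdot)_\zero$ is obtained by then applying the earlier-indexed update maps $f_1,\dots,f_{n-1}$, each of which resamples blocks from the correct conditional distributions and hence preserves the law of $X$ (Lemma~\ref{lem:distribution-preserved} and Proposition~\ref{prop:coupling-from-past}). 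It is this subsequent ``washing'' by measure-preserving updates that removes the bias your construction retains. Your scale choices ($n_k$ of order $2^k$, geometric tails for $K(v)$, hence power-law tails for the radius) and your use of $\gamma$ are in the right spirit and mirror the paper's choice of $\ell_n$ growing exponentially with $p_n=\ell_n^{-d}$, but without the CFTP (or some other stationarity-preserving) backbone the distributional identity $\varphi(U)\eqd X$ fails.
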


We will show in Section~\ref{sec:weak-mixing} that exponential weak spatial mixing implies~\eqref{eq:positive-coupling-prob} (in fact, it implies that the limit in~\eqref{eq:positive-coupling-prob} tends to 1 for any $0<\delta<1$) so that Theorem~\ref{thm:ffiid} implies part~\eqref{thm:main-a} of Theorem~\ref{thm:main}.
Although Theorem~\ref{thm:ffiid} is applicable in any dimension, our only real application for it (i.e., when~\eqref{eq:positive-coupling-prob} holds, but exponential weak spatial mixing does not hold) is for the critical two-dimensional Potts model (see Corollary~\ref{cor:potts-coding} below).

The theorems presented above are concerned with sufficient conditions for being \ffiid.
In the other direction, van den Berg and Steif gave a necessary condition~\cite[Theorem~2.1]{van1999existence} by showing that a phase transition presents an obstruction for being \ffiid\ in systems with no hard constraints (i.e., when $\Omega=S^{\Z^d}$). That is, they showed that if there exists more than one translation-invariant Markov random field with a given specification in which no hard constraints are present, then neither random field is \ffiid\ (to be precise, the statement of their theorem is that neither field is a finitary factor of a finite-valued \iid\ process, but the same ideas also show that neither field is \ffiid).
We extend this to models with hard constraints, under a mild condition on the probabilities of feasible boundary conditions (which always holds for systems with no hard constraints).
A random field is \emph{ergodic} if it is translation-invariant and it has no non-trivial translation-invariant events, i.e., every such event occurs with probability 0 or 1.

\begin{thm}\label{thm:no-ffiid}
	Let $X$ and $X'$ be two distinct ergodic Markov random fields with the same specification. Suppose that, for some $\epsilon>0$,
	\begin{equation}\label{eq:no-ffiid-assumption}
	\inf_{T \subset \Omega_{\partial \Lambda_n}:~\Pr(X'_{\partial \Lambda_n} \in T) \ge 1-\epsilon}~ \Pr(X_{\partial \Lambda_n} \in T) \ge e^{-o(n^d)} \qquad\text{as }n \to \infty .
	\end{equation}
	Then $X$ is not \ffiid.
\end{thm}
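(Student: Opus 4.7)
Assume for contradiction that $X = \varphi(Y)$ is \ffiid, with $Y$ an \iid\ field and the coding radius $R_\varphi$ almost surely finite. Since $X \ne X'$ as laws on $S^{\Z^d}$ and both are translation-invariant, there exist a finite set $V \subset \Z^d$, a set $A \subseteq S^V$, and $\delta > 0$ with
\[ p := \Pr(X_V \in A), \quad p' := \Pr(X'_V \in A), \quad p' - p \ge 3\delta. \]
For each $n$, define
\[ E_n := \Big\{ x \in S^{\Z^d} : \sum_{u:\, V+u \subset \Lambda_n} \1[x_{V+u} \in A] \ge (p+2\delta)|\Lambda_n| \Big\}, \]
which is measurable with respect to $X_{\Lambda_n}$. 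Birkhoff's pointwise ergodic theorem, applied to the ergodic field $X'$, gives $\Pr(X' \in E_n) \to 1$ as $n \to \infty$.

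Next I would transfer this typicality from $X'$ to $X$ using the common specification. Let $\epsilon$ be as in~\eqref{eq:no-ffiid-assumption}, pick $\epsilon_1 \le \epsilon^2$, and choose $n$ so that $\Pr(X' \notin E_n) \le \epsilon_1$. The Markov property gives $\Pr(X' \in E_n \mid X'_{\partial \Lambda_n} = \tau) = P^\tau_{\Lambda_n}(E_n)$ for every $\tau \in \Omega_{\partial \Lambda_n}$; setting $T_n := \{\tau : P^\tau_{\Lambda_n}(E_n) \ge 1 - \sqrt{\epsilon_1}\}$, Markov's inequality yields $\Pr(X'_{\partial \Lambda_n} \in T_n) \ge 1 - \sqrt{\epsilon_1} \ge 1 - \epsilon$. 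Hypothesis~\eqref{eq:no-ffiid-assumption} then supplies $\Pr(X_{\partial \Lambda_n} \in T_n) \ge e^{-o(n^d)}$, and reapplying the Markov property to the law of $X$,
\[ \Pr(X \in E_n) \ge \sum_{\tau \in T_n} \Pr(X_{\partial \Lambda_n} = \tau)\, P^\tau_{\Lambda_n}(E_n) \ge (1 - \sqrt{\epsilon_1})\, e^{-o(n^d)}. \]

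To close the argument I would show that the \ffiid\ structure forces $\Pr(X \in E_n) \le e^{-cn^d}$ for some fixed $c > 0$, contradicting the previous bound. The strategy is to localize the code: fix $L$ with $\Pr(R_\varphi > L) < \delta/(2|V|)$, and let $R_\varphi^{(v)}(y)$ denote the coding radius for determining $\varphi(y)_v$. A short check shows that $\{R_\varphi^{(v)}(y) \le L\}$ is a cylinder event in $y_{v+\Lambda_L}$, and that on this event $\varphi(y)_v$ is a deterministic function of $y_{v+\Lambda_L}$. Hence the truncated indicator $h_L(y) := \1[\varphi(y)_V \in A,\ R_\varphi^{(v)}(y) \le L\ \text{for all}\ v \in V]$ is $\sigma(y_{V+\Lambda_L})$-measurable, and similarly $r_L(y) := \1[R_\varphi(y) > L]$ is $\sigma(y_{\Lambda_L})$-measurable. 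Writing $S_n(X) := \sum_u \1[X_{V+u} \in A]$ as the sum of translates of $h_L$ over $u$ with $V+u \subset \Lambda_n$, plus an error bounded by $|V|$ times the analogous sum of translates of $r_L$, each of the two resulting sums is a bounded function of the \iid\ field $Y$ with per-coordinate bounded differences of order $O_{L,|V|}(1)$. McDiarmid's inequality therefore yields $\Pr(|S_n - \E S_n| \ge \delta|\Lambda_n|) \le e^{-cn^d}$ for some $c = c(L, |V|, \delta) > 0$, and, combined with $\E S_n = p|\Lambda_n|(1 + o(1))$, this produces the desired upper bound on $\Pr(X \in E_n)$. The main obstacle is precisely this last exponential concentration step: it is not automatic from mere almost-sure finiteness of $R_\varphi$, but rests on the (routine but essential) observation that $\{R_\varphi \le L\}$ is a local cylinder event, which reduces the question to concentration for bounded-range functions of \iid\ variables.
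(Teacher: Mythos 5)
Your proposal is correct and follows the same overall contradiction structure as the paper: exponential concentration of the empirical frequency for the \ffiid\ side, transferred via the common specification and hypothesis~\eqref{eq:no-ffiid-assumption}, against the ergodic theorem for $X'$. The transfer step is carried out in mirror image --- the paper bounds $\Pr(X_{\partial\Lambda_n} \in T_n)$ from above by $Ce^{-cn^d}$ after defining $T_n$ via a Markov-inequality threshold on $P^\tau_{\Lambda_n}(Z_n \ge a)$, while you bound $\Pr(X \in E_n)$ from below by $(1-\sqrt{\epsilon_1})e^{-o(n^d)}$ after defining $T_n$ via a threshold close to $1$ --- but these are equivalent bookkeeping devices. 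The substantive difference is how the exponential concentration for the \ffiid\ field is obtained. The paper invokes the result that the pointwise ergodic theorem holds at exponential rate for finitary factors of \iid\ processes (citing Bosco--Garcia, Marton--Shields, and van den Berg--Steif; this goes through the blowing-up property). You instead derive it directly: you observe that $\{R_\varphi \le L\}$ is a cylinder event in $y_{B_L}$, truncate the code at a fixed radius $L$ chosen so $\Pr(R_\varphi > L) < \delta/(2|V|)$, and decompose $S_n$ into a bounded-range function of the \iid\ field plus an error sum controlled by translates of $r_L$, to which McDiarmid's inequality applies coordinate-by-coordinate (note the per-coordinate bounded differences are $O_L(1)$ regardless of whether the $Y_v$ themselves are bounded or finite-valued, so there is no issue there). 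This gives a self-contained, elementary proof of exactly the concentration estimate the paper cites as a black box, at the cost of being slightly longer; the cited blowing-up approach yields a stronger qualitative property, but for the contradiction at hand your direct argument suffices.
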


In our applications, we shall also be concerned with models in which the phase transition is of a different nature, where there is a single translation-invariant Markov random field with the given specification, but multiple periodic ones (as occurs, for instance, in the hard-core model at high fugacity). Thus, we also mention here that \emph{total ergodicity} (i.e., ergodicity with respect to \emph{every} translation) is a necessary condition for being \ffiid. This is due to the fact that an \iid\ process is totally ergodic and that factors preserve ergodicity with respect to any translation.

\begin{remark}
	The statements of our results mention nothing about the \iid\ process involved in the \ffiid\ conclusion.
	However, any result in this paper that shows that a translation-invariant Markov random field $X$ is \ffiid, actually yields a stronger property. Specifically, any coding $\varphi$ from $Y$ to $X$ that we construct has the property that $Y_v$ is a sequence of independent \emph{finite-valued} random variables $(Y_{v,n})_{n \in \N}$, and $\varphi$ is finitary in both the $\Z^d$ space (as in the usual definition of finitary coding) and in the $\N$ space, i.e., there exists an almost surely finite $\tilde{R}$ such that $\varphi(Y)_\zero$ is determined by $(Y_{v,n})_{|v| \le \tilde{R}, n \le \tilde{R}}$. Moreover, for the results whose conclusions are \ffiid\ with exponential tails, the random variables $Y_{v,n}$ are, in addition, identically distributed, and $\tilde{R}$ is shown to have exponential tails. This allows one to apply a result of the author~\cite[Proposition~9]{spinka2018finitary} to obtain that $X$ is also a finitary factor of a \emph{finite-valued} \iid\ process (abbreviated \fvffiid) with stretched-exponential tails, meaning that the coding radius $R$ satisfies $\Pr(R \ge r) \le Ce^{-r^c}$ for some $C,c>0$ and all $r>0$. In particular, every time we write ``\ffiid\ with exponential tails'', we could instead (or in addition) write ``\fvffiid\ with stretched-exponential tails''.
	We chose not to include these comments in the statements of our theorems as we preferred to keep these statements short and concise and did not want to lose sight of the main goal, which is showing that certain Markov random fields are \ffiid.
\end{remark}

\begin{remark}\label{rem:r-Markov}
	All our results extend to $r$-Markov random fields, which are random fields $X$ satisfying that, for any finite $V \subset \Z^d$, $X_V$ and $X_{V^c}$ are conditionally independent given $X_{\partial^r V}$, where $\partial^r V$ is the set of vertices $u \in \Z^d$ having $1 \le \dist(u,V) \le r$.
\end{remark}

\begin{remark}\label{rem:other-graphs}
	We chose to work with the graph $G=\Z^d$ and the group $\Gamma$ of its translations, though much of the proof works in greater generality.
	For instance, the proof shows that if one assumes (in addition to the assumptions stated in the theorems) that $X$ is invariant to all automorphisms of $\Z^d$ (not just translations), then the resulting coding commutes with all automorphisms.
	In a more general setting, $G$ is a transitive locally-finite graph, $\Gamma$ is a group acting transitively on $V(G)$ by automorphisms, $X$ is a $\Gamma$-invariant Markov random field, and a coding is a $\Gamma$-equivariant map.
	Our results (and proofs) readily extend to the triangular lattice, the hexagonal lattice, the line graph of $\Z^d$ and variants of these.
	However, as our proof relies on geometric considerations (namely in Section~\ref{sec:grand-coupling}), it is unclear how wide the class of graphs for which our results hold is.
	In particular, we do not know whether they hold for all transitive graphs of sub-exponential growth (part~\eqref{thm:main-a} of Theorem~\ref{thm:main} seems to easily extend to this class). We also mention that transitivity is not strictly necessary and the definitions and proofs could be adapted to work for, say, quasi-transitive actions on graphs similar to $\Z^d$ (e.g., when $G=\Z^d$ and $\Gamma$ is the group of parity-preserving translations).
\end{remark}

\subsection{Applications}
Many classical discrete spin systems on $\Z^d$ satisfy spatial mixing properties (for appropriate model parameters). In some of the examples below, we refer to Dobrushin's uniqueness condition~\cite{Dobrushin1968TheDe}, to van den Berg and Maes's disagreement percolation condition~\cite{van1994disagreement} (in which case, we denote by $p_c(\Z^d)$ the critical probability for site-percolation on $\Z^d$) and to H{\"a}ggstr{\"o}m and Steif's high noise condition~\cite{haggstrom2000propp}, and use the fact each implies exponential strong spatial mixing. See Section~\ref{sec:discussion} for a discussion about these conditions. We write $Z^\tau_V$ below for a normalizing constant which makes $P^\tau_V$ a probability measure.

\subsubsection{\bf Proper colorings}
Let $q \ge 3$ and set $S:=\{1,\dots,q\}$.
A \emph{proper $q$-coloring} is a configuration $x \in S^{\Z^d}$ satisfying that $x_u \neq x_v$ for any adjacent vertices $u$ and $v$. Let $\Omega$ be the set of proper $q$-colorings of $\Z^d$ and let the specification be uniform, i.e., $P^\tau_V$ is the uniform distribution on $\Omega^\tau_V$, the set of proper $q$-colorings of $V$ which are compatible with the boundary condition $\tau$. It is well-known (e.g., by Dobrushin's uniqueness condition or by disagreement percolation) that there is a unique Gibbs measure for proper $q$-colorings when $q>4d$, and that this measure satisfies exponential strong spatial mixing.
In~\cite{goldberg2005strong}, it is shown that exponential strong spatial mixing holds in any triangle-free graph of maximum degree $\Delta \ge 3$ when $q >\alpha\Delta - \gamma$, where
\begin{equation}\label{eq:hard-core-params}
\alpha\text{ is the solution to }\alpha^\alpha=e \qquad\text{and}\qquad \gamma := \frac{4\alpha^3-6\alpha^2-3\alpha+4}{2(\alpha^2-1)} .
\end{equation}
(so $\alpha \approx 1.763$ and $\gamma \approx 0.47$). It is also shown there that, for $\Z^3$, $q \ge 10$ suffices. For $\Z^2$, it known that $q \ge 6$ suffices~\cite{achlioptas2005rapid,goldberg2006improved}.
On the other hand, for any $q \ge 3$, it is known that there are multiple maximum-entropy Gibbs measures for proper $q$-colorings of $\Z^d$ when the dimension $d$ is sufficiently large ($d \ge Cq^{10}\log^3 q$ suffices, with $C$ a universal constant)~\cite{peled2018rigidity}, and that these are not totally ergodic (they are mixtures of 2-periodic extreme Gibbs measures), so that, in particular, they are not \ffiid.
Thus, we obtain the following corollary.

\begin{cor}\label{cor:proper-colorings}
For $d \ge 2$ and $q>2\alpha d - \gamma$, with $\alpha$ and $\gamma$ given by~\eqref{eq:hard-core-params}, the unique Gibbs measure for proper $q$-colorings of $\Z^d$ is \ffiid\ with exponential tails. Moreover, this also holds for 6-colorings in two dimensions and 10-colorings in three dimensions. In addition, for any $q \ge 3$, when $d$ is sufficiently large, no maximum-entropy Gibbs measure for proper $q$-colorings of $\Z^d$ is \ffiid.
\end{cor}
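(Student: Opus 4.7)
The plan is to apply Theorem~\ref{thm:main}\eqref{thm:main-b} for all three positive statements and to use the necessity of total ergodicity for the negative one.

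For the first and second assertions I will verify exponential strong spatial mixing (ESSM) of the uniform specification in each regime by quoting the cited literature. The lattice $\Z^d$ is bipartite, hence triangle-free, with maximum degree $\Delta=2d$; Goldberg--Martin--Paterson~\cite{goldberg2005strong} give ESSM on any triangle-free graph of maximum degree $\Delta\ge 3$ whenever $q>\alpha\Delta-\gamma$, and substituting $\Delta=2d$ yields exactly the threshold $q>2\alpha d-\gamma$. The two-dimensional improvement ($q\ge 6$) follows from \cite{achlioptas2005rapid,goldberg2006improved}, and the three-dimensional one ($q\ge 10$) from~\cite{goldberg2005strong}. Since ESSM implies weak spatial mixing and hence uniqueness of the Gibbs measure, and the specification is translation-invariant, the unique measure is translation-invariant as well; Theorem~\ref{thm:main}\eqref{thm:main-b} then produces a finitary coding with exponential tails.

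For the negative assertion I would use the fact, recorded right after Theorem~\ref{thm:no-ffiid}, that total ergodicity (ergodicity under \emph{every} translation) is a necessary condition for being \ffiid. By~\cite{peled2018rigidity}, for any $q\ge 3$ there exists $d_0=d_0(q)$ such that for $d\ge d_0$ every maximum-entropy Gibbs measure for proper $q$-colorings of $\Z^d$ is a non-trivial convex combination of 2-periodic extreme Gibbs measures indexed by the even/odd sublattices of the bipartition. Such a mixture carries a non-trivial $(2\Z)^d$-invariant event that is swapped by any odd unit translation, so it fails to be ergodic under that translation, hence is not totally ergodic, hence is not \ffiid.

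The main obstacle is not really conceptual but bookkeeping: confirming that the Goldberg--Martin--Paterson threshold plugs in cleanly as $q>2\alpha d-\gamma$ for $\Z^d$, and checking that the machinery of Theorem~\ref{thm:main}\eqref{thm:main-b} applies even in the hard-constraint setting where the topological support $\Omega\subsetneq S^{\Z^d}$; both are immediate from the definitions in the introduction. Everything else is a direct invocation of previously established results.
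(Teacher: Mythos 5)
Your proposal follows the same route as the paper: the first two assertions are exactly the combination of the ESSM results of Goldberg--Martin--Paterson (plugging $\Delta=2d$) and the two- and three-dimensional improvements with Theorem~\ref{thm:main}\eqref{thm:main-b}, and the negative assertion is exactly the appeal to~\cite{peled2018rigidity} plus the observation that total ergodicity is necessary for being \ffiid. The only small caveat is that a $2$-periodic extreme maximum-entropy Gibbs measure is not itself a \emph{non-trivial} mixture; for those the obstruction is simply the failure of translation-invariance, which your argument should mention alongside the failure of total ergodicity for the translation-invariant mixtures.
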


\begin{remark}
	The first part of the corollary extends a previous result of the author~\cite{spinka2018finitary}, where \ffiid\ with exponential tails was shown when $q \ge 4d(d+1)$.
\end{remark}

\subsubsection{\bf The Potts model}\label{sec:Potts-model}
Let $q \ge 2$ and set $S := \{1,\dots,q\}$.
Given a parameter $\beta \in \R$, called the \emph{inverse temperature}, the specification is defined by
\begin{equation}\label{eq:Potts-spec}
P^\tau_V(\omega) := \frac{1}{Z^\tau_V} \cdot e^{\beta H^\tau_V(\omega)} , \qquad\text{where }H^\tau_V(\omega) := \sum_{\substack{\{u,v\} \in E(\Z^d)\\\{u,v\} \cap V \neq \emptyset}} \1_{\{\omega_u=\omega_v\}} .
\end{equation}
When $\beta>0$, neighboring spins tend to take the same value and the model is said to be \emph{ferromagnetic}. When $\beta<0$, neighboring spins tend to take different values and the model is said to be \emph{anti-ferromagnetic}. In particular, the case of $\beta=-\infty$ reduces to that of proper $q$-colorings.

For the ferromagnetic Potts model, it is well-known~(see, e.g., \cite[Theorem~3.2]{georgii2001random}) that there exists a critical value $\beta_c=\beta_c(q,d) \in (0,\infty)$ such that there is a unique Gibbs measure at inverse temperature $0<\beta<\beta_c$ and multiple ergodic Gibbs measures at inverse temperature $\beta>\beta_c$.
Using the random-cluster representation of the Potts model and the sharp phase transition in the random-cluster model~\cite{duminil2017sharp}, it follows that at inverse temperature $\beta<\beta_c$, the unique Gibbs measure satisfies exponential weak spatial mixing.
In two dimensions, more is known: there is a unique Gibbs measure at the critical $\beta=\beta_c$ if and only if $q \le 4$~\cite{duminil2016discontinuity,duminil2017continuity}, there is a RSW property at criticality when $q \le 4$~\cite{duminil2017continuity}, and exponential weak spatial mixing implies exponential strong spatial mixing for squares (see Section~\ref{sec:discussion}). Putting these results together allows us to show the following.

\begin{cor}\label{cor:potts-coding}
	Let $d \ge 2$ and $q \ge 2$. There exists $\beta_0(q,d) \ge 1/2d$ satisfying $\beta_0(q,d)=\beta_c(q,d)$ if either $q=2$ or $d=2$, such that the following holds.
	Let $\mu$ be a Gibbs measure for the ferromagnetic $q$-state Potts model on $\Z^d$ at inverse temperature $\beta>0$.
	\begin{itemize}
		\item If $\beta<\beta_0(q,d)$ then $\mu$ is \ffiid\ with exponential tails.
		\item If $\beta<\beta_c(q,d)$ then $\mu$ is \ffiid\ with power-law tails.
		\item If $\beta>\beta_c(q,d)$ then $\mu$ is not \ffiid.
		\item If $\beta=\beta_c(q,d)$ and $d=2$, then $\mu$ is \ffiid\ if and only if $q \le 4$.
	\end{itemize}
\end{cor}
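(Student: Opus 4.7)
My plan is to dispatch the four bullets of the corollary by invoking the three main theorems of the paper (Theorems~\ref{thm:main}, \ref{thm:ffiid}, and~\ref{thm:no-ffiid}), combined with standard structural results for the ferromagnetic Potts model and its random-cluster representation, partitioning according to the regime of $\beta$.

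For the second bullet ($\beta<\beta_c$) I would first recall the sharpness of the random-cluster phase transition (Duminil-Copin--Tassion), which yields exponential decay of connectivities at every $\beta<\beta_c$. Transferring this to the Potts model via the Edwards--Sokal coupling produces exponential weak spatial mixing in the sense of~\eqref{eq:WSM-def}, and Theorem~\ref{thm:main}\eqref{thm:main-a} converts this into \ffiid\ with power-law tails. For the first bullet I define $\beta_0(q,d)$ to be the supremum of $\beta>0$ for which the Potts specification satisfies exponential strong spatial mixing, and apply Theorem~\ref{thm:main}\eqref{thm:main-b}. The bound $\beta_0\ge 1/2d$ follows from standard small-$\beta$ sufficient conditions such as Bernoulli-domination of the random-cluster measure when $1-e^{-\beta}<p_c(\Z^d)$, or Dobrushin's uniqueness criterion, either of which is known to imply exponential SSM. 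For the Ising case $q=2$, $\beta_0=\beta_c$ is the classical theorem that exponential SSM holds throughout the uniqueness regime. For $d=2$ and general $q$, I would combine the second bullet with the planar upgrade from exponential WSM to exponential SSM on squares (referenced in Section~\ref{sec:discussion}); since the proof of Theorem~\ref{thm:main}\eqref{thm:main-b} uses only this restricted form of SSM, this gives $\beta_0=\beta_c$ in that case as well.

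For the third bullet ($\beta>\beta_c$) and the $q>4$ sub-case of the fourth bullet, I would invoke Theorem~\ref{thm:no-ffiid}. In the former regime, wired boundary conditions of distinct colors give distinct ergodic Gibbs measures; in the latter, the discontinuity theorem of Duminil-Copin--Gagnebin--Harel--Manolescu--Tassion produces multiple ergodic Gibbs measures at $\beta_c$ when $q>4$. The technical hypothesis~\eqref{eq:no-ffiid-assumption} is immediate for Potts: since there are no hard constraints we have $\Omega=S^{\Z^d}$, every single-site conditional probability is uniformly bounded below by $e^{-2d\beta}/q$, and so every $\tau\in S^{\partial\Lambda_n}$ has $\Pr(X_{\partial\Lambda_n}=\tau)\ge e^{-Cn^{d-1}}=e^{-o(n^d)}$, which is much stronger than what is needed.

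The remaining and most delicate case is the other half of the fourth bullet: $d=2$, $\beta=\beta_c$, $q\le 4$, where the Gibbs measure is unique but exponential weak spatial mixing fails. Here I would appeal to Theorem~\ref{thm:ffiid} and verify its weaker hypothesis~\eqref{eq:positive-coupling-prob2} by a random-cluster circuit argument. The RSW property at criticality for $q\le 4$ (Duminil-Copin--Sidoravicius--Tassion), together with planar duality, guarantees that, with probability bounded below uniformly in $n$ and in the random-cluster boundary induced by any Potts boundary $\tau$, there is a closed random-cluster circuit inside the annulus $\Lambda_n\setminus\Lambda_{\delta n}$. Such a circuit severs every random-cluster cluster meeting $\Lambda_{\delta n}$ from $\partial\Lambda_n$, so by Edwards--Sokal the Potts spins on $\Lambda_{\delta n}$ can be resampled from a common distribution that does not depend on $\tau$, producing a grand coupling whose coincidence on $\Lambda_{\delta n}$ witnesses $\gamma(\Lambda_n,\Lambda_{\delta n})\ge c>0$ in the sense of~\eqref{eq:def-gamma}. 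I expect this to be the main obstacle: one has to translate arbitrary Potts boundary conditions $\tau$ into random-cluster boundary conditions on $\partial\Lambda_n$, exploit FKG monotonicity to reduce the closed-circuit estimate to a worst-case boundary, and verify that the induced coupling genuinely attains the supremum implicit in $\gamma$.
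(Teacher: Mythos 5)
Your plan tracks the paper's own proof closely, and most bullets are handled the same way: the second bullet via sharpness of the random-cluster phase transition and the Edwards--Sokal coupling, the third bullet and the $q>4$ sub-case of the fourth via Theorem~\ref{thm:no-ffiid}, and the $q\le 4$ critical 2D case via RSW and Theorem~\ref{thm:ffiid}. Your closed-circuit argument is the planar dual of the paper's statement that $\Lambda_{n/4}$ is disconnected from $\partial\Lambda_n$ in $\eta^1$ with probability bounded away from zero, and the difficulty you flag (reducing arbitrary Potts boundary conditions to a worst case) is exactly what the Strassen coupling $\eta^\tau\subset\eta^1$ coming from FKG and the decreasingness of $E^\tau$ accomplishes; the paper sets this up explicitly at the start of the proof.

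The genuine gap is in your treatment of $q=2$ for $d\ge 3$. You assert that ``$\beta_0=\beta_c$ is the classical theorem that exponential SSM holds throughout the uniqueness regime.'' That is a 2D statement (Martinelli--Olivieri--Schonmann); for $d\ge 3$, exponential strong spatial mixing in the sense of~\eqref{eq:SSM-def} (or even just for cubes) for the Ising model at \emph{all} $\beta<\beta_c$ is not established, so your definition of $\beta_0$ as ``the supremum of $\beta$ with exponential SSM'' does not reach $\beta_c$ by any known theorem. The paper instead cites van den Berg--Steif, whose result for monotone specifications without hard constraints gives \ffiid\ with exponential tails under exponential \emph{weak} spatial mixing alone (via Martinelli--Olivieri's relaxation bound for attractive Glauber dynamics), bypassing SSM entirely; exponential WSM for Ising at $\beta<\beta_c$ is available in all dimensions. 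A smaller issue: it is not obvious that the set of $\beta$ satisfying exponential SSM is an interval, and for $d=2$ you silently replace SSM by ``SSM for squares''; the paper avoids these issues by merely asserting the existence of a suitable $\beta_0$ rather than defining it as a supremum.
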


\begin{proof}
	Some of the proofs here rely on the well-known connection between the Potts model and the random-cluster model via the Edwards--Sokal coupling, which we now explain (see, e.g., \cite{grimmett2006random} for definitions and properties of the latter model and coupling). Let $V \subset \Z^d$ be finite and let $G$ be the graph with vertex-set $V \cup \partial V$ and edge-set $\{ \{u,v\} \in E(\Z^d) : \{u,v\} \cap V \neq \emptyset \}$.	
	We denote by $\phi_{V,p,q}$ the random-cluster measure (with the parameter $p$ chosen in correspondence to $\beta$, namely, $p := 1 - e^{-\beta}$) on $G$ with wired boundary conditions (meaning that clusters intersecting $\partial V$ do not contribute a factor of $q$ to the weight of a configuration). An element $\eta$ in the configuration space of $\phi_{V,p,q}$ is a spanning subgraph of $G$.
	For a boundary condition $\tau \in \{1,\dots,q\}^{\partial V}$, let $E^\tau$ be the event that the clusters of $u$ and $v$ in $\eta$ are disjoint for any $u,v \in \partial V$ such that $\tau_u \neq \tau_v$. We may obtain a sample $\omega^\tau$ from $P^\tau_V$ by first sampling $\eta^\tau$ from $\phi_{V,p,q}(\cdot \mid E^\tau)$ and then independently assigning each cluster of $\eta^\tau$ a uniform value in $\{1,\dots,q\}$ if it does not intersect $\partial V$, or the deterministic value $\tau_v$ if it contains some $v \in \partial V$. Moreover, since $E^\tau$ is a decreasing event for any $\tau$, the FKG property of $\phi_{V,p,q}$~\cite[Theorem~3.8]{grimmett2006random} implies that $\eta^\tau$ is stochastically dominated by $\eta^1$ for any $\tau$. Thus, by Strassen's theorem~\cite{strassen1965existence}, we may couple all $(\eta^\tau)_\tau$ so that $\eta^\tau \subset \eta^1$ almost surely for all $\tau$. By carrying out the above construction simultaneously for every $\tau$ (namely, by assigning the same value to a cluster of $\eta^\tau$ and a cluster of $\eta^{\tau'}$ whenever the two clusters are identical and do not intersect $\partial V$), we may further couple $(\omega^\tau)_\tau$ on the same probability space as $(\eta^\tau)_\tau$, so that $(\omega^\tau_U)_\tau$ all coincide on the event that $U$ is not connected to $\partial V$ in $\eta^1$ (i.e., the event that no cluster of $\eta^1$ intersects both $U$ and $\partial V$).
	
	We proceed to prove the corollary.
	The second part will follow from part~\eqref{thm:main-a} of Theorem~\ref{thm:main}, once we verify that $\mu$ satisfies exponential weak spatial mixing. Indeed, by the above coupling, we have that $\|P^\tau_{V,U}-P^{\tau'}_{V,U}\|$ is bounded above by the probability that $U$ is connected to $\partial V$ in~$\eta^1$. Since $p<p_c(q,d)$, the probability that any given $u \in U$ is connected to $\partial V$ in $\eta^1$ decays exponentially fast in $\dist(U,\partial V)$~\cite[Theorem~1.2]{duminil2017sharp}, thus implying exponential weak spatial mixing.
		
	The third part follows from Theorem~\ref{thm:no-ffiid} and the definition of $\beta_c(q,d)$.
	
	The last part follows using the results in~\cite{duminil2016discontinuity,duminil2017continuity}.
	Namely, when $q>4$, there are multiple ergodic Gibbs measures~\cite[Theorem~1.1]{duminil2016discontinuity} so that $\mu$ is not \ffiid\ by Theorem~\ref{thm:no-ffiid}. When $q \le 4$, by the Russo--Seymour--Welsh estimates~\cite[Theorems~3 and~4]{duminil2017continuity} and the FKG property, the probability that $\Lambda_{n/4}$ is connected to $\partial \Lambda_n$ in $\eta^1$ is at most $c$ for some constant $0<c<1$. From this and the coupling above (with $U:=\Lambda_{n/4}$ and $V:=\Lambda_n$), it is easy to see that $\gamma(\Lambda_n,\Lambda_{n/4}) \ge 1-c$ so that Theorem~\ref{thm:ffiid} implies that $\mu$ is \ffiid.
	
	It remains to prove the first part of the corollary. By Dobrushin's uniqueness condition and a direct computation, $\mu$ is \ffiid\ with exponential tails when $\beta < 1/2d$. This shows that $\beta_0(q,d)$ can be chosen to satisfy $\beta_0(q,d) \ge 1/2d$.
	The fact that we may take $\beta_0(q,d)=\beta_c(q,d)$ when $q=2$ follows from the results in~\cite{van1999existence} for the Ising model. The fact that we may also take $\beta_0(q,d)=\beta_c(q,d)$ when $d=2$ is due to the fact that, in two dimensions, exponential weak spatial mixing implies exponential strong spatial mixing for squares~\cite{martinelli19942,alexander2004mixing} and this is the only assumption needed for the proof of part~\eqref{thm:main-b} of Theorem~\ref{thm:main} (see Section~\ref{sec:discussion}).
\end{proof}

\begin{remark}
	The result in the case of the Ising model ($q=2$) was already known~\cite{van1999existence}. In fact, since the Ising model is known to undergo a continuous phase transition in any dimension $d \ge 2$~\cite{yang1952spontaneous,aizenman1986critical,aizenman2015random}, the unique Gibbs measure at the critical point $\beta=\beta_c(2,d)$ is also \ffiid.
	It is still unknown whether this unique Gibbs measure is \fvffiid, though it is known that it is not \ffiid\ with finite expected coding volume~\cite{van1999existence}.
\end{remark}

\begin{remark}
	Using the Edwards--Sokal coupling~\cite{swendsen1987nonuniversal,edwards1988generalization}, it is a simple consequence of Corollary~\ref{cor:potts-coding} that the (wired/free) random-cluster measure $\phi_{p,q}$ on $\Z^d$ with $q \in \N$ is \ffiid\ when $p < p_c(q,d)$ (with exponential tails when $p$ is sufficiently small, $q=2$ or $d=2$), and, when $d=2$ and $p=p_c(q,d)$, is \ffiid\ if and only if $q \le 4$.
	Indeed, under this coupling, given the Potts configuration, the state of the edges in the random-cluster configuration are independent.
	In two dimensions, it follows by duality that $\phi_{p,q}$ is \ffiid\ when $p>p_c(q,d)$. In particular, $\phi_{p,q}$ is \ffiid\ for all $p$ when $d=2$ and $q \le 4$.
	
	In a work with Matan Harel~\cite{harel2018finitary}, we study finitary codings for the random-cluster model and show that $\phi_{p,q}$ is \ffiid\ with exponential tails whenever $p<p_c(q,d)$. In particular, it follows that the unique Gibbs measure for the ferromagnetic Potts model is \ffiid\ with exponential tails whenever $\beta<\beta_c(q,d)$.
\end{remark}

The picture for the anti-ferromagnetic Potts model is less complete.
Dobrushin's uniqueness condition implies that there is a unique Gibbs measure for the anti-ferromagnetic $q$-state Potts model on $\Z^d$ at inverse temperature $\beta<0$ whenever $\beta>-c_q/d$ or $q>4d$~\cite{salas1997absence}, where $c_q>0$ is a constant depending only on $q$, and that this Gibbs measure satisfies exponential strong spatial mixing (see also~\cite{yin2015spatial}). Thus, in this regime, the unique Gibbs measure is \ffiid\ with exponential tails.
In~\cite{goldberg2006improved}, the two-dimensional model is investigated and various sufficient conditions are given for exponential strong spatial mixing (thus implying \ffiid\ with exponential tails), but we do not list those here.
In the other direction, for every $q \ge 3$ and $\beta_0<0$, there exists $d_0(q,\beta_0)$ such that there are multiple ergodic Gibbs measures at inverse temperature $\beta \le \beta_0$ in dimension $d \ge d_0(q,\beta)$~\cite{peled2017condition}, and it follows from Theorem~\ref{thm:no-ffiid} that they cannot be \ffiid.

\begin{remark}
The anti-ferromagnetic Ising model ($q=2$, $\beta<0$) is an interesting special case. On the one hand, it is well-known that, on $\Z^d$ (or any other bipartite graph), it is equivalent to the ferromagnetic Ising model (so that, in particular, $-\beta_c(2,d)$ is the critical inverse temperature for the existence of multiple Gibbs measures for the anti-ferromagnetic model). This is because ``flipping'' the spins on one of the sublattices has the effect of changing the sign of $\beta$ in~\eqref{eq:Potts-spec}.
It would therefore seem that one could transform a finitary coding for the ferromagnetic Ising model at inverse temperature $\beta>0$ into a finitary coding for the anti-ferromagnetic Ising model at inverse temperature $-\beta<0$. However, the operation of ``flipping a sublattice'' cannot be carried out in a translation-equivariant manner, so that this does not provide a recipe for constructing a finitary coding for the anti-ferromagnetic model.
Nevertheless, the anti-ferromagnetic model does indeed satisfy the analogous properties as the ferromagnetic model: \ffiid\ with exponential tails when $0>\beta>-\beta_c(2,d)$, \ffiid\ when $\beta=-\beta_c(2,d)$ and not \ffiid\ when $\beta<-\beta_c(2,d)$.	
This is due to the fact that the anti-ferromagnetic Ising model is anti-monotone (see Section~\ref{sec:discussion}; see also Remark~\ref{rem:hard-core-anti-monotone} where this is explained in more detail for the hard-core model).
\end{remark}

\subsubsection{\bf The hard-core model}\label{sec:hard-core-model}
We set $S:=\{0,1\}$ and identify configurations in $S^{\Z^d}$ with subsets of $\Z^d$.
Let $\Omega$ be the collection of independent sets in $\Z^d$, where an \emph{independent set} is a subset of vertices that contains no two adjacent vertices.
Given a parameter $\lambda>0$, called the \emph{fugacity} or \emph{activity}, the specification is defined by
\[ P^\tau_V(\omega) := \frac{\1_{\{\omega \cup \tau \text{ is an independent set}\}}}{Z^\tau_V} \cdot \lambda^{|\omega|} .\]
Dobrushin's uniqueness condition (or, alternatively, the high noise condition) implies that there is a unique Gibbs measure for the hard-core model when $\lambda<\frac{1}{2d-1}$, and that this Gibbs measure satisfies exponential strong spatial mixing. The disagreement percolation condition broadens this regime to $\lambda<\frac{p_c(\Z^d)}{1-p_c(\Z^d)}$ (see~\cite{van1994percolation}). It has further been shown by Weitz~\cite{weitz2006counting} that this continues to hold when $\lambda<\lambda_c(2d-1)$, where
\[ \lambda_c(\Delta) := \frac{\Delta^\Delta}{(\Delta-1)^{\Delta+1}} \]
is the critical fugacity for the hard-core model on the regular tree of degree $\Delta+1$.
Moreover, it is shown in~\cite{sinclair2017spatial} that exponential strong spatial mixing further holds when $\lambda<\lambda_c(\mu_d)$, where $\mu_d$ is the connective constant of $\Z^d$ (this improves Weitz's result since $\mu_d<2d-1$ and $\lambda_c(\Delta)$ is decreasing in $\Delta$). We refer the reader to~\cite{sinclair2017spatial}, where the known bounds on the connective constants of several lattices are conveniently summarized in a table.
For $\Z^2$, the authors of~\cite{sinclair2017spatial} also show (by a different means than improving the connective constant of $\Z^2$) that exponential strong spatial mixing holds when $\lambda \le 2.538$.
On the other hand, for $d \ge 2$ and $\lambda > Cd^{-1/3}\log^2 d$, with $C$ a universal constant, it is known that there are multiple Gibbs measures for the hard-core model on $\Z^d$ at fugacity $\lambda$~\cite{peled2014odd}. 
In fact, in this regime, there are two 2-periodic extreme Gibbs measures, $\mu^\text{e}$ and $\mu^\text{o}$, characterized by unequal densities on the even and odd sublattices (when $\lambda > Cd^{-1/4}\log^2 d$, it has been shown that these are the only periodic extreme Gibbs measures~\cite{peled2017condition}), and the translation-invariant Gibbs measure $\tfrac12(\mu^\text{e}+\mu^\text{o})$ is not totally ergodic and hence not \ffiid.
Since the existence of a safe symbol ensures that~\eqref{eq:no-ffiid-assumption} holds, Theorem~\ref{thm:no-ffiid} implies that no Gibbs measure can be \ffiid\ in this regime.
Thus, we obtain the following corollary.

\begin{cor}\label{cor:hard-core}
	For $d \ge 2$ and $\lambda<\lambda_c(\mu_d)$, the unique Gibbs measure for the hard-core model on $\Z^d$ at fugacity $\lambda$ is \ffiid\ with exponential tails. Moreover, in the case $d=2$, this also holds when $\lambda \le 2.538$. In addition, there exists a universal constant $C$ such that for $d \ge 2$ and $\lambda > Cd^{-1/3}\log^2 d$, no Gibbs measure for the hard-core model on $\Z^d$ at fugacity $\lambda$ is \ffiid.
\end{cor}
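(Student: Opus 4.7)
My plan is to treat the three claims of the corollary separately, each time combining one of the main theorems of this paper with an input from the literature. For the two positive claims (\ffiid\ with exponential tails when $\lambda<\lambda_c(\mu_d)$, and when $d=2$ and $\lambda\le 2.538$), I would invoke part~\eqref{thm:main-b} of Theorem~\ref{thm:main} directly: the hypothesis of exponential strong spatial mixing was established by Weitz~\cite{weitz2006counting} for $\lambda<\lambda_c(2d-1)$, sharpened by Sinclair--Srivastava--Thurley~\cite{sinclair2017spatial} to $\lambda<\lambda_c(\mu_d)$ using the connective constant, with the dimension-two bound $\lambda\le 2.538$ also coming from~\cite{sinclair2017spatial}.

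For the negative claim, let $\lambda>Cd^{-1/3}\log^2 d$ with $C$ the universal constant from~\cite{peled2014odd}, which guarantees two $2$-periodic extreme Gibbs measures $\mu^{\text{e}}$ and $\mu^{\text{o}}$ differing in their densities on the even and odd sublattices. Given any Gibbs measure $X$, if $X$ is not translation-invariant it is not \ffiid\ by definition, so I may assume it is translation-invariant. Set $X':=\tfrac12(\mu^{\text{e}}+\mu^{\text{o}})$. A first check is that $X'$ is $\Z^d$-ergodic: for any $\Z^d$-invariant event $A$ and any $v\in\Z^d$ with $\|v\|_1$ odd, translation by $v$ maps $\mu^{\text{e}}$ to $\mu^{\text{o}}$ but leaves $A$ fixed, so $\mu^{\text{o}}(A)=\mu^{\text{e}}(A)$, and this common value lies in $\{0,1\}$ by triviality of the tail $\sigma$-algebra of the extreme Gibbs measure $\mu^{\text{e}}$. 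However, $X'$ fails to be totally ergodic: the shift by $(2,0,\ldots,0)$ leaves each of $\mu^{\text{e}}$, $\mu^{\text{o}}$ separately invariant, giving a non-trivial decomposition of $X'$ into components invariant under that single shift. By the remark following Theorem~\ref{thm:no-ffiid}, $X'$ is therefore not \ffiid; if $X=X'$ this completes the argument, and otherwise---assuming $X$ is $\Z^d$-ergodic, which I may since \ffiid\ implies ergodicity---I would apply Theorem~\ref{thm:no-ffiid} to the pair of distinct ergodic Markov random fields $(X, X')$.

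The step I expect to require the most care is the verification of hypothesis~\eqref{eq:no-ffiid-assumption} of Theorem~\ref{thm:no-ffiid}, and here the safe symbol of the hard-core model is decisive. The all-zero boundary $\zero\in\Omega_{\partial\Lambda_n}$ is always feasible, and a direct partition-function estimate yields $\Pr(\mu_{\partial\Lambda_n}=\zero)\ge (1+\lambda)^{-|\partial\Lambda_n|}=e^{-O(n^{d-1})}=e^{-o(n^d)}$ for any hard-core Gibbs measure $\mu$. The standard safe-symbol argument from~\cite{van1999existence} then shows that any $T\subset\Omega_{\partial\Lambda_n}$ with $\Pr(X'_{\partial\Lambda_n}\in T)\ge 1-\epsilon$ satisfies $\Pr(X_{\partial\Lambda_n}\in T)\ge e^{-o(n^d)}$, verifying~\eqref{eq:no-ffiid-assumption} and completing the proof.
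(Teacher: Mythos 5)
Your proposal is correct and follows essentially the same route as the paper: the positive statements are exactly Theorem~\ref{thm:main}\eqref{thm:main-b} combined with the exponential strong spatial mixing results of~\cite{weitz2006counting} and~\cite{sinclair2017spatial}, and the negative statement combines the failure of total ergodicity of $\tfrac12(\mu^{\text{e}}+\mu^{\text{o}})$ with Theorem~\ref{thm:no-ffiid}, whose hypothesis~\eqref{eq:no-ffiid-assumption} is verified via the safe symbol (i.e., via~\eqref{eq:soft-constraint-lower-bound} for every feasible $\tau$, not just $\tau=\zero$ as in your displayed estimate --- but you correctly defer that to the standard safe-symbol argument). The only material you add beyond the paper's write-up is the explicit check that the mixture is $\Z^d$-ergodic and the reduction to translation-invariant ergodic $X$, both of which the paper leaves implicit.
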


\begin{remark}\label{rem:hard-core-anti-monotone}
At any fugacity, the hard-core model is anti-monotone (see Section~\ref{sec:discussion} for the definition; see also~\cite{van1994percolation,haggstrom1998exact}).
Using that $\Z^d$ is bipartite and that the hard-core model has only nearest-neighbor interactions, allows to take advantage of the anti-monotonicity is a strong manner.
Define a partial order $\preceq$ on $\{0,1\}^{\Z^d}$ in which $\omega \preceq \omega'$ if and only if $\omega_v \le \omega'_v$ for all even $v$ and $\omega_v \ge \omega'_v$ for all odd $v$ (where $v\in \Z^d$ is said to be \emph{even} or \emph{odd} according to the sum of its coordinates).
Let $\omega^\text{e}$ and $\omega^\text{o}$ be unique minimal and maximal elements of $\{0,1\}^{\Z^d}$ with respect to this partial order and note that $\omega^\text{e},\omega^\text{o} \in \Omega$ (these elements correspond to the independent sets induced by the even and odd sublattices).
It follows (see, e.g., \cite[Lemma~3.2]{van1994percolation}) that $P^{\omega^\text{e}_{\partial V}}_V$ and $P^{\omega^\text{o}_{\partial V}}_V$ converge weakly as $V$ increases to $\Z^d$, giving rise to two Gibbs measures, $\mu^\text{e}$ and $\mu^\text{o}$, which are the smallest and largest Gibbs measures in the sense of stochastic domination (with respect to the partial order $\preceq$), and that there is a unique Gibbs measure if and only if $\mu^\text{e}=\mu^\text{o}$.
Thus, when there is more than one Gibbs measure, Theorem~\ref{thm:no-ffiid} implies that no Gibbs measure is \ffiid. On the other hand, when there is a unique Gibbs measure, the arguments in~\cite{van1999existence} (adapted to anti-monotone systems instead of monotone systems) yield that the unique Gibbs measure is \ffiid\ (see Section~\ref{sec:discussion}). We remark that, since the existence of multiple Gibbs measures in the hard-core model is not known to be monotonic with respect to the fugacity~\cite{brightwell1999nonmonotonic,haggstrom2002monotonicity}, we do not know that there exists a critical fugacity for the property of being \ffiid.
\end{remark}

\subsubsection{{\bf The Widom--Rowlinson model} with $q \ge 2$ types of particles}\label{sec:WR-model}
Set $S:=\{0,1,\dots,q\}$ and let $\Omega$ be the set of configurations $x \in S^{\Z^d}$ satisfying $x_u x_v=0$ or $x_u=x_v$ for any adjacent $u$ and~$v$.
Given a parameter $\lambda>0$, called the \emph{activity}, the specification is defined by
\[ P^\tau_V(\omega) := \frac{\1_{\Omega^\tau_V}(\omega)}{Z^\tau_V} \cdot \lambda^{\sum_{v \in V} \1_{\{\omega_v \neq 0 \}}} .\]
The high noise condition implies that there is a unique Gibbs measure when $q\lambda<\frac{1}{2d-1}$, and that this measure satisfies exponential strong spatial mixing.
Dobrushin's unique condition implies the same whenever $q\lambda<\min\{\frac{q}{2d-1},\frac{1}{2d-1-2d/q}\}$, and the disagreement percolation condition implies this whenever $q\lambda < \frac{p_c(\Z^d)}{1-p_c(\Z^d)}$. Thus, in these regimes, Theorem~\ref{thm:main} implies that the unique Gibbs measure is \ffiid\ with exponential tails. On the other hand, when $\lambda$ is sufficiently large as a function of the dimension ($\lambda > Cd^{-1/8}\log d$ suffices~\cite{peled2017condition}), it has been shown that there are multiple ergodic Gibbs measures~\cite{lebowitz1971phase,runnels1974phase,burton1995new,peled2017condition}, and since the existence of a safe symbol implies that~\eqref{eq:no-ffiid-assumption} holds for any two such measures (see Section~\ref{sec:discussion}), Theorem~\ref{thm:no-ffiid} implies that these Gibbs measures are not \ffiid. We remark that, in the special case of $q=2$, the model is monotone, so that whenever there is a unique Gibbs measure it is \ffiid\ (see Section~\ref{sec:discussion}).

\subsubsection{\bf The beach model}\label{sec:beach-model}
Set $S:=\{-2,-1,1,2\}$ and let $\Omega$ be the set of configurations $x \in S^{\Z^d}$ satisfying that $x_u x_v \ge -1$ for any adjacent $u$ and~$v$.
Given a parameter $\lambda>0$, called the \emph{activity}, the specification is defined by
\begin{equation}\label{eq:beach-spec}
P^\tau_V(\omega) := \frac{\1_{\Omega^\tau_V}(\omega)}{Z^\tau_V} \cdot \lambda^{\sum_{v \in V} |\omega_v|} .
\end{equation}
H{\"a}ggstr{\"o}m~\cite{haggstrom1996phase} showed that there exists a critical activity $\lambda_c(d) \in (0,\infty)$ such that there is a unique Gibbs measure for the beach model on $\Z^d$ at activity $\lambda<\lambda_c(d)$ and multiple ergodic Gibbs measures at activity $\lambda>\lambda_c(d)$. Moreover, there exists a universal constant $C$ such that
\[ \frac{2}{2d^2 + d - 1} \le \lambda_c(d) \le 1 + C d^{-1/4} \log^2 d .\]
The lower bound is shown in~\cite{haggstrom1996phase} and the upper bound follows from the results in~\cite{peled2017condition} (the upper bound $\lambda_c(d) \le 4e\cdot 28^d$ was first shown by Burton and Steif~\cite{burton1994non}).
As we show below, no Gibbs measure is \ffiid\ when $\lambda>\lambda_c(d)$.
On the other hand, showing that the unique Gibbs measure is \ffiid\ when $\lambda<\lambda_c(d)$ is not straightforward, even for small $\lambda$. This is because neither Dobrushin's uniqueness condition nor disagreement percolation can be directly applied to this model, as their assumptions do not hold for any $\lambda>0$. Nevertheless, we can show the following.

\begin{cor}\label{cor:beach-model}
	Let $d \ge 2$ and let $\lambda_c(d)$ be the critical activity for the beach model on $\Z^d$.
	Let $\lambda>0$ and let $X$ be a Gibbs measure for the beach model on $\Z^d$ at activity $\lambda$.
	If $\lambda<2/(2d^2+d-1)$ then $X$ is \ffiid\ with exponential tails, if $\lambda<\lambda_c(d)$ then $X$ is \ffiid, and if $\lambda>\lambda_c(d)$ then $X$ is not \ffiid.
\end{cor}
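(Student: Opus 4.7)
The plan is to treat the three activity regimes separately, invoking Theorem~\ref{thm:main}, Theorem~\ref{thm:ffiid} and Theorem~\ref{thm:no-ffiid} in turn. For the small-activity regime $\lambda<2/(2d^2+d-1)$, the strategy is to establish exponential strong spatial mixing and then apply part~\eqref{thm:main-b} of Theorem~\ref{thm:main}. As noted in the text preceding the corollary, neither Dobrushin's uniqueness condition nor disagreement percolation applies directly to the specification~\eqref{eq:beach-spec}, since a single $\pm 2$ spin on a boundary already forbids the opposite sign at each neighboring vertex and the single-site conditional total-variation distance between two boundary conditions differing at a single site can be $1$. The workaround is a disagreement coupling in which the $\pm 1$ states act as a buffer that absorbs disagreement without forwarding it: one couples two finite-volume beach samples with different boundary conditions site by site, tracking only the subset of vertices where agreement cannot be enforced, and shows by a direct computation on the single-site conditionals derived from~\eqref{eq:beach-spec} that the expected number of disagreement offspring of any disagreement site is at most $(2d^2+d-1)\lambda/2$. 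For $\lambda<2/(2d^2+d-1)$ this dominates the disagreement cluster by a subcritical branching random walk, so $\|P^\tau_{V,U}-P^{\tau'}_{V,U}\|$ decays exponentially in $\dist(U,\Sigma_{\tau,\tau'})$, giving exponential SSM.

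For the uniqueness regime $\lambda<\lambda_c(d)$, the strategy is to exploit the monotonicity of the beach model established in~\cite{haggstrom1996phase}. Equipping $S=\{-2,-1,1,2\}$ with the natural total order, the specification~\eqref{eq:beach-spec} satisfies an FKG lattice condition, and so $P^{+2}_{\Lambda_n}$ and $P^{-2}_{\Lambda_n}$ are the stochastically maximal and minimal finite-volume measures among $(P^\tau_{\Lambda_n})_\tau$. They converge weakly to extremal Gibbs measures $\mu^+$ and $\mu^-$, with $\mu^+=\mu^-$ precisely when $\lambda<\lambda_c(d)$. Via the standard grand monotone coupling, every $P^\tau_{\Lambda_n}$ is pointwise sandwiched between $P^{+2}_{\Lambda_n}$ and $P^{-2}_{\Lambda_n}$, so $\gamma(\Lambda_n,\Lambda_{\delta n})$ is at least the probability under this coupling that $P^{+2}_{\Lambda_n}$ and $P^{-2}_{\Lambda_n}$ agree at every vertex of $\Lambda_{\delta n}$; the identity $\mu^+=\mu^-$ forces this probability to tend to $1$ as $n\to\infty$ for every $\delta<1$. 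Condition~\eqref{eq:positive-coupling-prob} of Theorem~\ref{thm:ffiid} is then satisfied, yielding \ffiid.

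For the non-uniqueness regime $\lambda>\lambda_c(d)$, H{\"a}ggstr{\"o}m supplies distinct translation-invariant ergodic Gibbs measures $\mu^+\ne\mu^-$, so the plan is to apply Theorem~\ref{thm:no-ffiid} to $(X,X')=(\mu^+,\mu^-)$ and to the symmetric pair; non-extremal translation-invariant Gibbs measures are automatically not \ffiid\ since \ffiid\ forces ergodicity. The work lies in verifying~\eqref{eq:no-ffiid-assumption}. There is no safe symbol, but the chain $+2\mid+1\mid-1\mid-2$ is locally feasible at every edge, so $\pm 1$ states can form a contour shielding $\Lambda_n$ from the $+$-phase outside. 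Given $T\subset\Omega_{\partial\Lambda_n}$ with $\mu^-(T)\ge 1-\epsilon$, one considers under $\mu^+$ the event $E_n$ that a two-layer contour (an outer shell of $+1$'s adjacent to an inner shell of $-1$'s) occupies the thin annulus $\Lambda_{n+2}\setminus\Lambda_n$; a Peierls-style estimate using~\eqref{eq:beach-spec} gives $\mu^+(E_n)\ge e^{-Cn^{d-1}}=e^{-o(n^d)}$. Conditional on $E_n$, the law on $\Lambda_n$ is the beach measure with all-$(-1)$ inner boundary, and iterating the contour construction on nested annuli together with the monotonicity from the previous paragraph shows this conditional law is within $\epsilon$ of $\mu^-$ in total variation on $\partial\Lambda_n$. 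Hence $\mu^+(X_{\partial\Lambda_n}\in T)\ge\mu^+(E_n)\cdot(1-2\epsilon)\ge e^{-o(n^d)}$, verifying~\eqref{eq:no-ffiid-assumption}.

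The most delicate step is the last one: in the absence of a safe symbol, the contour has to be built from two interleaved shells of $\pm 1$'s and the closeness of the resulting conditional law to $\mu^-$ must be extracted from the FKG-type monotonicity of~\eqref{eq:beach-spec} together with the Peierls cost of the contour. All of the ingredients are available, but quantifying their combination is where the real work sits.
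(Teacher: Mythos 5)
Your proposal deviates from the paper's argument in all three regimes, and two of the deviations contain real gaps.

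\textbf{Part 1 (exponential tails for small $\lambda$).} You propose a direct disagreement coupling on the beach model itself, claiming that $\pm 1$ states ``absorb disagreement without forwarding it.'' This does not hold: in the beach model $\pm 1$ values \emph{do} constrain their neighbors (a $+1$ at $u$ forbids $-2$ at any neighbor $v$, since $x_u x_v\ge -1$), and one can exhibit boundary conditions $\tau,\tau'\in\Omega_{\partial v}$ differing at a single site (say $\tau_u=+2$, $\tau'_u=-2$, with $\tau_{u'}=\tau'_{u'}=+1$ at another neighbor $u'$) for which $\Omega^\tau_v=\{1,2\}$ while $\Omega^{\tau'}_v=\{-1\}$, so $\|P^\tau_v-P^{\tau'}_v\|=1$. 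Both Dobrushin's condition and disagreement percolation therefore fail on the beach specification for every $\lambda>0$, exactly as asserted just before the corollary, and your ``direct computation'' cannot produce a subcritical branching bound of the form $(2d^2+d-1)\lambda/2$. The paper's route is essentially forced: pass to $Z_v:=\text{sign}(X_v)$, which is a Gibbs measure for the site-centered ferromagnet (a $2$-Markov random field with no hard constraints), check Dobrushin's condition \emph{there} via~\cite{haggstrom1996phase} (this is where $2/(2d^2+d-1)$ comes from), apply Theorem~\ref{thm:main}\eqref{thm:main-b} with Remark~\ref{rem:r-Markov}, and recover $X$ as a finitary factor of $(Z,W)$ with coding radius $1$, where $W$ is an auxiliary Bernoulli field giving $|X_v|$.

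\textbf{Part 2 ($\lambda<\lambda_c(d)$).} You claim that $\mu^+=\mu^-$ ``forces'' $\gamma(\Lambda_n,\Lambda_{\delta n})\to 1$. That implication is false without quantitative decay of $\E[\omega^{+}_v-\omega^{-}_v]$ faster than $n^{-d}$: uniqueness alone only gives $\Pr(\omega^{+2}_v\ne\omega^{-2}_v)\to 0$ pointwise, and the union bound over $|\Lambda_{\delta n}|\asymp n^d$ sites need not go to $0$. (The critical 2D Ising case in the paper illustrates this precisely: there $\mu^+=\mu^-$, yet the paper only obtains $\liminf\gamma>0$ via RSW, not $\gamma\to 1$.) The paper bypasses Theorem~\ref{thm:ffiid} entirely here and simply cites the van den Berg--Steif result (discussed in Section~\ref{sec:discussion}) that for monotone specifications uniqueness implies \ffiid.

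\textbf{Part 3 ($\lambda>\lambda_c(d)$).} Your contour idea is in the right spirit but is more elaborate than needed and leaves the ``iterating the contour construction on nested annuli'' step unquantified. The paper proves the stronger bound~\eqref{eq:soft-constraint-lower-bound} directly: for any $\tau\in\Omega_U$, one can sandwich $\tau$ between a layer of $\pm 1$'s (using that $1\in\Omega^\eta_v$ iff $\eta_u\ne -2$ for all $u\in\partial v$, and symmetrically) to produce a feasible extension to $V:=U\cup\partial U\cup\partial(U\cup\partial U)$ for every outer boundary condition, giving $\Pr(X_U=\tau)\ge c^{(2d+1)^2|U|}$ with $c=\min\{1,\lambda\}/(2+\lambda)$. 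This yields~\eqref{eq:no-ffiid-assumption} at once, without any comparison to $\mu^-$ in total variation.
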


\begin{proof}
	The proof of the first part of the corollary is based on a correspondence between the beach model and the so-called site-centered ferromagnetic, which is an Ising-type model (with finite-range ferromagnetic interactions on more than just pairs of sites). We briefly describe some properties of this correspondence; see~\cite[Section~4]{haggstrom1996phase} for details and proofs (though the model is defined in~\cite{haggstrom1996phase} only for rational $\lambda$, the proofs there extend to any positive $\lambda$).
	Define a $\pm 1$-valued random field $Z$ by $Z_v := \text{sign}(X_v)$ so that $X = (Z_v |X_v|)_{v \in \Z^d}$. Then $Z$ is a Gibbs measure for the site-centered ferromagnet (one may take the induced specification of $Z$ as the definition of the site-centered ferromagnet).
	 Moreover, conditioned on $Z$, the random field $(|X_v|)_{v \in \Z^d}$ is an \iid\ process, where, for every $v$, we have that $|X_v|=1$ almost surely if $Z_u \neq Z_v$ for some $u \in \partial v$, and $|X_v|-1$ is a Bernoulli random variable with parameter $\lambda/(1+\lambda)$ otherwise.

When $\lambda < 2/(2d^2+d-1)$, it is shown in~\cite{haggstrom1996phase} (see the proof of Lemma~4.9 there) that $Z$ satisfies Dobrushin's uniqueness condition. Thus, $Z$ satisfies exponential strong spatial mixing so that, by Theorem~\ref{thm:main}, it is \ffiid\ with exponential tails (actually, $Z$ is a 2-Markov random field, but Theorem~\ref{thm:main} extends to this case; see Remark~\ref{rem:r-Markov}).
Let $W=(W_v)_{v \in \Z^d}$ be an \iid\ process, independent of $Z$, with $W_v$ distributed $\text{Ber}(\lambda/(1+\lambda))$.
Since $X$ is a finitary factor of $(Z,W)$ with (deterministic) coding radius~1, it easily follows that $X$ is \ffiid\ with exponential tails.

When $\lambda<\lambda_c(d)$, there is a unique Gibbs measure.
Since the beach model is monotone, it follows that $X$ is \ffiid\ (see Section~\ref{sec:discussion}).

When $\lambda > \lambda_c(d)$, there are two distinct ergodic Gibbs measures (characterized by an equal density of positive and negative values)~\cite{haggstrom1996phase,peled2017condition}. Thus, Theorem~\ref{thm:no-ffiid} will imply that $X$ is not \ffiid\ once we show that~\eqref{eq:no-ffiid-assumption} holds for any two ergodic Gibbs measures. In fact, we shall show that the stronger~\eqref{eq:soft-constraint-lower-bound} holds.
Specifically, we show that for any finite $U \subset \Z^d$ and any $\tau \in \Omega_U$,
\[ \Pr(X_U=\tau) \ge c^{(2d+1)^2|U|} \qquad\text{where }c := \min_{\tau \in \Omega_{\partial v}} \min_{s \in \Omega^\tau_v} P^\tau_v(s) = \frac{\min\{1,\lambda\}}{2+\lambda} .\]
In fact, this follows from~\cite[Proposition~4.9]{briceno2017topological} (perhaps with a different constant $c$), since $\Omega$ satisfies a property called topological strong spatial mixing. Nevertheless, we provide a short self-contained proof.
Denote $W := U \cup \partial U$ and $V := W \cup \partial W$. It suffices to show that, for any $\xi \in \Omega_{\partial V}$,
\[ \Pr(X_U=\tau \mid X_{\partial V}=\xi) \ge c^{|V|} \ge c^{(2d+1)^2|U|} .\]
This will follow if we show that there exists $\omega \in \Omega^\xi_V$ such that $\omega_U=\tau$, and that, for any $\omega \in \Omega^\xi_V$, $P^\xi_V(\omega) \ge c^{|V|}$. The latter easily follows from~\eqref{eq:beach-spec}, since $Z^\tau_V \le (\lambda(2+\lambda))^{|V|}$.
To see the former, first observe that, for any $\eta \in \Omega_{\partial v}$, we have $1 \in \Omega^\eta_v$ (we slightly abuse notation here by identifying $\Omega^\eta_v$ with a subset of $S$) if and only if $\eta_u \neq -2$ for all $u \in \partial v$. A similar property holds for $-1$. Thus, there exists $\tau' \in \Omega_W$ such that $\tau'_U=\tau$ and $\tau'_v \in \{-1,1\}$ for all $v \in \partial U$, and similarly, there exists $\xi' \in \Omega_{\partial V \cup \partial W}$ such that $\xi'_{\partial V}=\xi$ and $\xi'_v \in \{-1,1\}$ for all $v \in \partial W$. It follows that the concatenation of $\tau'$ and $\xi'$ is a feasible configuration on $V \cup \partial V$. That is, the element $\omega \in S^V$ defined by $\omega_W := \tau'$ and $\omega_{\partial W} = \xi'_{\partial W}$ belongs to $\Omega^\xi_V$.
\end{proof}

\begin{remark}
	In light of the correspondence established in~\cite{haggstrom1996phase} between the beach model and the site-centered ferromagnet (which is monotone and enjoys an additional monotonicity property with respect to $\lambda$), it seems plausible that one may be able to obtain a sharp transition for the existence of a finitary coding (in a similar manner as for the Ising model). Namely, that for $\lambda<\lambda_c(d)$, the unique Gibbs measure is \ffiid\ with exponential tails. We do not pursue this here.
\end{remark}

\subsubsection{\bf The monomer-dimer model}
This model consists of a random matching in $\Z^d$, i.e., a collection of disjoint edges (also called dimers), in which a matching $\omega$ in finite-volume is chosen with probability proportional to $\lambda$ to the power of the number of dimers. In particular, configurations in this model are elements of $\{0,1\}^{E(\Z^d)}$ (in fact, this model is precisely the hard-core model on the line graph of $\Z^d$). Strictly speaking, such models do not fall into the setting of our theorems, but the results extend to this case (see Remark~\ref{rem:other-graphs}) (alternatively, we could instead regard configurations as elements of, say, $\{0,1,\dots,2d\}^{\Z^d}$ by encoding the direction of the dimer incident to a vertex, where 0 means no dimer is present). Van den Berg~\cite{van1999absence} showed that this model satisfies exponential strong spatial mixing for any finite value of $\lambda>0$. It follows that the monomer-dimer model on $\Z^d$ is \ffiid\ with exponential tails for any $\lambda$.

\subsubsection{\bf Graph homomorphisms}\label{sec:homomorphism-example}
In~\cite{brightwell2000gibbs,briceno2017strong}, it is shown that a large class of nearest-neighbor spin systems satisfy exponential weak spatial mixing. Theorem~\ref{thm:ffiid} shows that the unique Gibbs measure in such a system is \ffiid.
We do not describe the general class of these systems (they are certain weighted homomorphisms to dismantlable graphs), but instead give one simple example in which weak spatial mixing is satisfied, but strong spatial mixing is not.
Let $\Omega$ be the set of all configurations $x \in \{0,1,2,3\}^{\Z^d}$ such that $\{x_u,x_v\} \notin \{ \{0,2\},\{1,3\},\{2,2\} \}$ for any adjacent $u$ and $v$, and let $P^\tau_V$ be the probability measure on $\Omega^\tau_V$ in which $P^\tau_V(\omega)$ is proportional to $\lambda^{\sum_{v \in V} \1_{\{\omega_v=0\}}}$. It is shown in~\cite[Proposition~9.2]{briceno2017strong} that this specification satisfies exponential weak spatial mixing when $\lambda$ is sufficiently large (as a function of $d$), but does not satisfy strong spatial mixing (with any rate function) for any $\lambda$.
Thus, for any $d \ge 2$, when $\lambda$ is sufficiently large, the unique Gibbs measure is \ffiid, but we do not know whether it is \ffiid\ with exponential tails.
On the other hand, using the results in~\cite{peled2017condition}, one may check that, for any fixed $\lambda \neq 1$, when $d$ is sufficiently large, no Gibbs measure is \ffiid; for $\lambda<1$, the reason being that there is a unique translation-invariant Gibbs measure and this measure is not totally ergodic (it is a mixture of two 2-periodic extreme Gibbs measures, characterized by having an unequal density of $\{0,2\}$ and $\{1,3\}$ on the even and odd sublattices), and for $\lambda>1$, the reason being that there are precisely two ergodic Gibbs measures (characterized by having a very high density of either $\{0,1\}$ or $\{0,3\}$) so that Theorem~\ref{thm:no-ffiid} applies (although $\Omega$ does not have a safe symbol and does not satisfy topological strong spatial mixing, one may check in a similar manner as in the proof of Corollary~\ref{cor:beach-model} that~\eqref{eq:soft-constraint-lower-bound} holds, so that assumption~\eqref{eq:no-ffiid-assumption} of the theorem is satisfied).

\subsection{Discussion}
\label{sec:discussion}

This work deals with a relation between finitary codings and spatial mixing properties. We give a short survey on each of these.

Finitary codings and finitary isomorphisms (invertible finitary codings, whose inverses are also finitary) have a long history (see, e.g., \cite{serafin2006finitary} for a survey), beginning with Meshalkin's example of finitarily isomorphic \iid\ processes~\cite{meshalkin1959case}, and culminating in the work of Keane and Smorodinsky~\cite{keane1979bernoulli} who showed that \iid\ processes of the same entropy are finitarily isomorphic, strengthening the celebrated Ornstein isomorphism theorem~\cite{ornstein1970bernoulli}.
The theory of finitary codings and finitary isomorphisms for one-dimensional processes is rather well established~\cite{keane1977class,keane1979bernoulli,keane1979finitary,akcoglu1979finitary,rudolph1981characterization,rudolph1982mixing,krieger1983finitary,fiebig1984return,schmidt1984invariants,del1990bernoulli,smorodinsky1992finitary,harvey2006universal,harvey2011invariant}.
Finitary codings of random fields in one or more dimensions have also been studied both in particular models such as the Ising model~\cite{van1999existence,spinka2018finitary}, proper colorings~\cite{timar2011invariant,angel2012stationary,holroyd2017one,holroyd2017finitary,holroyd2017mallows}, random walk in random scenery~\cite{steif2001thet,keane2003finitary,den2006random} and perfect matchings~\cite{soo2009translation,timar2009invariant,lyons2011perfect}, and in general settings~\cite{junco1980finitary,van1999existence,haggstrom2000propp,harvey2006universal,galves2008perfect,spinka2018finitary}.
While any finite-state mixing Markov chain in one dimension is isomorphic to an \iid\ process (and is a finitary factor of any \iid\ process with larger entropy~\cite{harvey2006universal}), this is not the case for Markov random fields in two or more dimensions~\cite{slawny1981ergodic}.
Moreover, there exists a Markov random field which is isomorphic to an \iid\ process, but which is not finitarily isomorphic to such a process. On the one hand, del Junco showed that there always exists a finitary coding from any ergodic Markov random field to any finite-valued \iid\ process with lower entropy~\cite{junco1980finitary}. On the other hand, however, there does exist a Markov random field which is a factor of an \iid\ process (and is thus isomorphic to an \iid\ process~\cite{katznelson1972commuting}), but which is not \ffiid. The plus-state of the Ising model above the critical inverse temperature is such an example~\cite{van1999existence}.

Spatial mixing properties have been of interest, among other reasons, because of their implications on uniqueness of Gibbs measures and on efficient approximation of counting problems. 
They have been studied both in particular models such as the Potts model~\cite{goldberg2006improved,mossel2008rapid,lubetzky2013cutoff,yin2015spatial}, proper colorings~\cite{goldberg2005strong,achlioptas2005rapid,jalsenius2009strong,gamarnik2015strong}, the hard-core (and monomer-dimer) model~\cite{van1999absence,weitz2006counting,sinclair2017spatial}, and in general settings~\cite{martinelli1994approach,alexander1998weak,brightwell2000gibbs,alexander2004mixing,dyer2004mixing,nair2007correlation,li2013correlation,marcus2013computing,lubetzky2014cutoff,briceno2017strong,blanca2018spatial}.
In two-dimensional models with no hard constraints, exponential weak spatial mixing implies exponential strong spatial mixing for squares~\cite{martinelli19942} (we elaborate on this below).
Exponential strong (and in some cases also weak) spatial mixing is intimately related to optimal temporal mixing of natural local dynamics (such as Glauber dynamics and various block dynamics)~\cite{stroock1992logarithmic,martinelli1994approach,martinelli1994approach2,martinelli1999lectures,dyer2004mixing,weitz2004mixing} and has even been used to establish cutoff, a sharp transition in the convergence to equilibrium~\cite{lubetzky2013cutoff,lubetzky2014cutoff}.

Rapid temporal mixing of local dynamics is closely related to perfect sampling (also called exact simulation), a subject which has received much attention (see, e.g., \cite{propp1998coupling,dimakos2001guide}).
A simple and useful method, called coupling-from-the-past, was devised by Propp and Wilson~\cite{propp1996exact} for perfectly sampling from the stationary distribution of a one-dimensional Markov chain on a finite state-space (e.g., a stochastic spin system on a finite graph).
This method works particularly well for monotone spin systems and even for anti-monotone ones~\cite{haggstrom1998exact}.
In the absence of monotonicity, auxiliary processes (these have been given several names, including bounding chains, dominating processes, super-PCA and envelope-PCA) have been used together with coupling-from-the-past to provide efficient perfect sampling algorithms~\cite{huber1998exact,haggstrom1999exact,
haggstrom2000propp,kendall2000perfect,
huber2004perfect,buvsic2013probabilistic}.
An extension of coupling-from-the-past to Markov random fields (nearest-neighbor spin systems on infinite graphs) was a basic ingredient underlying the constructions of finitary codings in~\cite{van1999existence,haggstrom2000propp,spinka2018finitary}, and is such in the present paper as well. In particular, these constructions also provide an algorithm for perfectly sampling the marginal $X_V$ of a Markov random field $X$ (for which the results apply) on any finite set $V \subset \Z^d$. This is somewhat surprising since, in such random fields, the value $X_v$ at any given site $v$ is usually correlated with the value at infinitely many other sites.
We mention that perfect sampling of non-Markov random fields (having infinite-range interactions) has also been studied~\cite{de2008exact,galves2008perfect,galves2010perfect,de2012developments}.

For (discrete nearest-neighbor) monotone spin systems, van den Berg and Steif~\cite{van1999existence} established Theorem~\ref{thm:main} under weaker assumptions (they did not formulate the results in the precise manner that we now describe, but these follow from their results).
Let us first explain what we mean by a monotone system.
A specification $P$ is said to be \emph{monotone} if there exists a linear ordering $\le$ on $S$ such that, for any vertex $v \in \Z^d$ and any boundary conditions $\tau,\tau' \in \Omega_{\partial v}$ such that $\tau \le \tau'$ pointwise (i.e., $\tau_u \le \tau'_u$ for all $u \in \partial v$), we have that $P^\tau_v$ is stochastically dominated by $P^{\tau'}_v$, i.e.,
\[ P^\tau_v(\omega_v \ge s) \le P^{\tau'}_v(\omega_v \ge s) \qquad\text{for all }s \in S .\]
Many well-known spin systems are monotone, including the Ising model (see Section~\ref{sec:Potts-model}), the Widom-Rowlinson model with two types of particles (see Section~\ref{sec:WR-model}) and the beach model (see Section~\ref{sec:beach-model}). 
Let us also mention a similar anti-monotonic property. A specification $P$ is said to be \emph{anti-monotone} if there exists a linear ordering $\le$ on $S$ such that $P^\tau_v$ is stochastically dominated by $P^{\tau'}_v$ whenever $\tau \ge \tau'$ pointwise. The hard-core model (see Section~\ref{sec:hard-core-model}) is an example of an anti-monotone spin system.
Van den Berg and Steif showed that when there is a unique Markov random field with a given monotone specification, this random field is \ffiid. Furthermore, using a result of Martinelli and Olivieri~\cite{martinelli1994approach}, they showed that, for monotone systems with no hard constraints, exponential weak spatial mixing implies \ffiid\ with exponential tails (they also showed that this implies \fvffiid, i.e., finitary factor of a finite-valued \iid\ process, which was improved to \fvffiid\ with stretch-exponential tails in~\cite{spinka2018finitary}).
An inspection of the proofs in~\cite{martinelli1994approach,van1999existence} reveals that these results carry over to anti-monotone systems (for this it is important that $\Z^d$ is bipartite and that the spin system has only nearest-neighbor interactions, rather than any finite-range interaction; see~\cite{haggstrom1998exact}).
Thus, for monotone and anti-monotone systems with no hard constraints, Theorem~\ref{thm:main} yields weaker results than those in~\cite{van1999existence}.
On the other hand, when exponential strong spatial mixing is satisfied, though Theorem~\ref{thm:main} does not yield a new result, we provide a self-contained proof (whereas the result in~\cite{van1999existence} appeals to~\cite{martinelli1994approach}).
Also, when~\eqref{eq:positive-coupling-prob} holds (e.g., for the critical two-dimensional Ising model), the result in~\cite{van1999existence} gives no information on the coding radius in general (though specifically for the Ising model, one could get such information by appealing to results about the mixing time~\cite{lubetzky2012critical}), whereas our proof yields a more explicit power-law bound on its tails.

H{\"a}ggstr{\"o}m and Steif~\cite{haggstrom2000propp} showed how one could drop the monotonicity assumption in exchange for a \emph{high noise} condition, which is, with our definitions,
\begin{equation}\label{eq:high-noise}
\gamma(\{v\},\{v\}) > 1-\frac{1}{2d} .
\end{equation}
The quantity $\gamma(\{v\},\{v\})$ is called the \emph{multigamma admissibility}, and, as mentioned, is the probability under an optimal coupling of $(P^\tau_v)_{\tau \in \Omega_{\partial v}}$ that the value at $v$ is the same for all boundary conditions (see Section~\ref{sec:couplings}).
The authors of~\cite{haggstrom2000propp} proved that any translation-invariant Markov random field satisfying~\eqref{eq:high-noise} is \ffiid\ with exponential tails (as in the case above, they also show \fvffiid\ and this was improved to \fvffiid\ with stretched-exponential tails in~\cite{spinka2018finitary}).

Let us further compare the high noise condition~\eqref{eq:high-noise} to two well-known weaker conditions existing in the literature, which are sufficient for exponential strong spatial mixing. These are Dobrushin's uniqueness condition~\cite{Dobrushin1968TheDe} and van den Berg and Maes's disagreement percolation condition~\cite{van1994disagreement} (we formulate these for translation-invariant Markov random fields, though both are more general).
As will immediately be evident, the main advantage of these conditions over the above high noise condition is that they involve comparisons between pairs of boundary conditions, whereas~\eqref{eq:high-noise} involves a simultaneous comparison between all boundary conditions. In particular, it is the case that each of the two conditions is implied by~\eqref{eq:high-noise}.
Recall that $\Sigma_{\tau,\tau'}$ is the set on which $\tau$ and $\tau'$ disagree.
The uniqueness condition of Dobrushin is
\begin{equation}\label{eq:dobrushin-uniqueness}
 \sum_{u \in \partial v} ~ \max_{\substack{\tau,\tau' \in \Omega_{\partial v}\\\Sigma_{\tau,\tau'}=\{u\}}} \|P^\tau_v - P^{\tau'}_v\| < 1 .
\end{equation}
Since $\|P^\tau_v - P^{\tau'}_v\| \le 1-\gamma(\{v\},\{v\})$ for any $\tau,\tau' \in \Omega_{\partial v}$, it is immediate that~\eqref{eq:high-noise} implies~\eqref{eq:dobrushin-uniqueness}.
It is also known that~\eqref{eq:dobrushin-uniqueness} implies exponential strong spatial mixing (see~\cite[Theorem~3.2]{weitz2005combinatorial} and~\cite[Theorem~3.3, Corollary~3.12, Theorem~4.6]{weitz2004mixing}).
One may interpret~\eqref{eq:dobrushin-uniqueness} as a bound on the total influence (of the neighbors a site) on a site.
Similar conditions have been studied~\cite{dobrushin1985constructive,dobrushin1985completely,stroock1992logarithmic,weitz2005combinatorial,weitz2004mixing}, allowing for other metrics, larger finite-size criteria (rather than a single-site criterion), and total influence \emph{of} a site (dual to the notion of total influence \emph{on} a site).
The disagreement percolation condition of van den Berg and Maes is, in its simplest form,
\begin{equation}\label{eq:disagreement-percolation}
 \|P^\tau_v - P^{\tau'}_v\| < p_c(\Z^d) \qquad\text{for all }\tau,\tau' \in \Omega_{\partial v} ,
\end{equation}
where $p_c(\Z^d)$ is the critical probability for site-percolation on $\Z^d$. Using that $p_c(\Z^d)>1/(2d-1)$ for $d \ge 2$, one sees that~\eqref{eq:high-noise} implies~\eqref{eq:disagreement-percolation}. Moreover, the proof in~\cite{van1994disagreement} shows that \eqref{eq:disagreement-percolation} implies exponential strong spatial mixing.
Neither~\eqref{eq:dobrushin-uniqueness} nor~\eqref{eq:disagreement-percolation} follow from one another, and there are situations in which either one provides better bounds than the other (see~\cite{van1994disagreement,georgii2001random} for a discussion and comparison between the two conditions).
As both~\eqref{eq:dobrushin-uniqueness} and~\eqref{eq:disagreement-percolation} imply exponential strong spatial mixing, by Theorem~\ref{thm:ffiid}, they are both sufficient conditions for a translation-invariant Markov random field to be \ffiid\ with exponential tails.

Strong spatial mixing, by definition, allows to couple any two measures $P^\tau_V$ and $P^{\tau'}_V$ so that, with high probability, they agree on the region of $V$ which is far away from the set $\Sigma_{\tau,\tau'}$ of disagreement between $\tau$ and $\tau'$. Such a coupling is ``optimal'' for the pair of boundary conditions $\{\tau,\tau'\}$.
The main challenge in the proof of part~\eqref{thm:main-b} of Theorem~\ref{thm:main} is to construct a coupling of the measures $(P^\tau_V)_\tau$, for all possible boundary conditions $\tau \in \Omega_{\partial V}$, which is simultaneously nearly optimal for any pair of boundary conditions (in fact, we need this for more than just pairs of boundary conditions). It is not immediately clear that this is at all possible (see Remark~\ref{rem:optimal-couplings} in Section~\ref{sec:couplings}; this is somewhat analogous to the difference between stochastic monotonicity and realizable monotonicity~\cite{fill2001stochastic}).
Although versions of the proof of Theorem~\ref{thm:contraction-implies-ffiid} have appeared before~\cite{haggstrom2000propp,spinka2018finitary} (the proof is based on a contracting property for an auxiliary ``bounding'' random field~\cite{haggstrom1999exact,diaconis1999iterated,haggstrom2000propp,huber2004perfect,spinka2018finitary}), the key ingredient, Theorem~\ref{thm:grand-coupling-exists}, which establishes the existence of such a coupling, seems to be new.

As mentioned above, in two-dimensional models with no hard constraints (i.e., when the topological support $\Omega$ is $S^{\Z^2}$), exponential weak spatial mixing implies exponential strong spatial mixing for squares~\cite{martinelli19942} (see also~\cite{alexander2004mixing}). In fact, for this conclusion, it even suffices to assume exponential weak spatial mixing only for subsets of a sufficiently large square (see the remark just after Theorem~1.1 in~\cite{martinelli19942}).
More precisely, for any exponentially decaying rate function $\rho$, there exists $L_0$ and an exponentially decaying rate function $\rho'$ for which, if~\eqref{eq:WSM-def} holds for all $(U,V)$ such that $V \subset \Lambda_{L_0}$, then~\eqref{eq:SSM-def} holds (with rate function $\rho'$) for all $(U,V)$ such that $V=\Lambda_L$ for some $L$.
An inspection of the proof of part~\eqref{thm:main-b} of Theorem~\ref{thm:main} for the two-dimensional case, shows that we only require exponential strong spatial mixing for squares (see Remark~\ref{rem:two-dimensional-proof}). Thus, we obtain that a translation-invariant Markov random field on $\Z^2$ that has no hard constraints and satisfies exponential weak spatial mixing (even just for subsets of a sufficiently large square) is \ffiid\ with exponential tails.

As we have also mentioned, van den Berg and Steif~\cite{van1999existence} showed that a phase transition presents an obstruction for being \ffiid\ in models with no hard constraints (their proof is based on the so-called blowing-up property, which was established for finitary factors of \iid\ processes by Marton and Shields~\cite{marton1994}). One explanation behind this phenomenon is that the ergodic theorem holds at an exponential rate for \iid\ processes, and \ffiid\ preserves this property, while a phase transition prohibits an exponential rate in the ergodic theorem, as the configuration may ``jump'' from one phase to another at a sub-exponential ``cost'' (such a transition comes with a boundary effect, but no cost in the bulk) (see~\cite{van1999existence} for a more detailed discussion).
Theorem~\ref{thm:no-ffiid} generalizes this phenomenon to models with hard constraints (the idea of the proof is based on the explanation above and on some remarks in~\cite{van1999existence}). Indeed, it is easy check that~\eqref{eq:no-ffiid-assumption} holds in the absence of hard constraints, since, in this case, $P^\tau_v(s)>0$ for any $\tau \in \Omega_{\partial v}$ and $s \in S$, implying the stronger property that, for some $c>0$,
\begin{equation}\label{eq:soft-constraint-lower-bound}
\Pr(X_{\partial \Lambda_n}=\tau) \ge e^{-cn^{d-1}} \qquad\text{for any }n \ge 1\text{ and any }\tau \in \Omega_{\partial \Lambda_n} .
\end{equation}
One may similarly check that~\eqref{eq:soft-constraint-lower-bound} holds whenever there is a so-called ``safe symbol'', that is, a value $s_0 \in S$ for which $P^\tau_v(s_0)>0$ for any $\tau \in \Omega_{\partial v}$. In fact, \eqref{eq:soft-constraint-lower-bound} holds in much greater generality, namely, whenever the topological support $\Omega$ satisfies topological strong spatial mixing~\cite[Proposition~4.9]{briceno2017topological} (see Section~\ref{sec:beach-model} for a hands-on proof in the case of the beach model). Moreover, \eqref{eq:soft-constraint-lower-bound} also holds in other natural situations in which there is neither a safe symbol nor topological strong spatial mixing (e.g., the example given in Section~\ref{sec:homomorphism-example}). Thus, for such models, Theorem~\ref{thm:no-ffiid} says that a phase transition (i.e., the existence of more than one ergodic Markov random field with the given specification) is an obstruction for being \ffiid.

\medbreak

We conclude with some open problems:
\begin{enumerate}
	\item Does exponential weak spatial mixing imply \ffiid\ with finite expected coding radius? (our proof gives a coding radius having heavy power-law tails)
	\item Does exponential weak spatial mixing imply \fvffiid? (we showed that it implies \ffiid)
	\item Does weak spatial mixing with \emph{some} rate function imply \ffiid?
	\item Does exponential strong spatial mixing imply \fvffiid\ with exponential tails? (we showed that it implies \ffiid\ with exponential tails and also \fvffiid\ with stretched-exponential tails)
	\item Does exponential strong spatial mixing imply the existence of a finitary coding from every finite-valued \iid\ process with larger entropy?
	\item Does \ffiid\ with exponential tails imply (exponential) weak spatial mixing?
	\item Can assumption~\eqref{eq:no-ffiid-assumption} be removed from Theorem~\ref{thm:no-ffiid} (instead assuming only that $X$ and $X'$ have the same topological support)?
\end{enumerate}

\subsection{Notation}
We consider $\Z^d$ as a graph in which two vertices $u$ and $v$ are adjacent if $|u-v|=1$, where $|v|=\|v\|_1:=|v_1|+\cdots+|v_d|$ denotes the $\ell_1$-norm.
For sets $U,V \subset \Z^d$, we write $\dist(U,V) := \min_{u \in U,v \in V} |u-v|$ and $\dist(u,V) := \dist(\{u\},V)$. We denote $\partial V := \{ u \in \Z^d : \dist(u,V)=1 \}$ and $\partial v := \partial \{v\}$.
We use $\zero$ to denote the origin $(0,\dots,0) \in \Z^d$.

\subsection{Organization}
In Section~\ref{sec:technique}, we provide the basic technique of proof used to construct the finitary codings. We then apply this in Section~\ref{sec:weak-mixing} to prove Theorem~\ref{thm:ffiid} and part~\eqref{thm:main-a} of Theorem~\ref{thm:main}. In Section~\ref{sec:strong-mixing}, we again apply this technique in order to prove part~\eqref{thm:main-b} of Theorem~\ref{thm:main}. Finally, in Section~\ref{sec:no-ffiid}, we prove Theorem~\ref{thm:no-ffiid}.

\subsection{Acknowledgments}
I would like to thank Raimundo Brice\~no, Nishant Chandgotia, Hugo Duminil-Copin and Jeff Steif for useful discussions. I am especially grateful to Matan Harel for many discussions throughout the preparation of this paper.

\section{Proof technique and overview}
\label{sec:technique}

There is a common thread lying at the base of the proofs of the two parts of Theorem~\ref{thm:main} (and also Theorem~\ref{thm:ffiid}), which is the use of block dynamics in conjunction with the method of coupling-from-the-past. We begin by explaining the dynamics in Section~\ref{sec:dynamics}. We then discuss couplings in Section~\ref{sec:couplings}, an important ingredient for the implementation of coupling-from-the-past. Next, in Section~\ref{sec:coupled-dynamics}, we give a formal definition of the dynamics, coupled from all starting configurations. Finally, we show in Section~\ref{sec:cftp} how these coupled dynamics can be used together with coupling-from-the-past to provide a recipe for constructing finitary codings, and we indicate in Section~\ref{sec:outline} how this method is later applied in Section~\ref{sec:weak-mixing} and Section~\ref{sec:strong-mixing} to obtain Theorem~\ref{thm:main} and Theorem~\ref{thm:ffiid}.

\subsection{Dynamics}
\label{sec:dynamics}
Our proofs rely on a dynamics called the \emph{heat-bath block dynamics}. On a finite graph, this is the following procedure:
\begin{itemize}
	\item Choose a block $\Delta$ (a set of vertices) at random according to some distribution.
	\item Resample the configuration on $\Delta$ according to the conditional distribution given by the current configuration on $\partial \Delta$.
\end{itemize}
On an infinite graph, we may follow a similar procedure, as long as we restrict ourselves to blocks $\Delta$ which have no infinite connected components. As we are considering Markov random fields, this restriction ensures that the resampling step is well-defined by the specification (the configuration is resampled independently in each component).

As we aim to construct a coding, our block distributions cannot be arbitrary and should satisfy certain properties such as translation invariance. We thus restrict ourselves specialized block distributions which arise from the following procedure:
\begin{itemize}
	\item Let $\Delta_0 \subset \Z^d$ be finite and contain $\zero$ -- this is the ``base'' block.
	\item Declare each vertex of $\Z^d$, independently, to be ``active'' with probability $p_0$.
	\item Declare each active vertex to be ``chosen'' if there are no other active vertices within distance $\diam(\Delta_0)+1$ from it.
	\item Let $\Delta$ be the union of the translates of $\Delta_0$ by the chosen vertices.
\end{itemize}
Note that the definition of ``chosen'' ensures that every connected component of $\Delta$ is almost surely a translate of $\Delta_0$.
It is instructive to note that the block distribution obtained is translation-invariant and is \ffiid\ (in fact, \fvffiid\ with bounded coding radius).

\subsection{Couplings}
\label{sec:couplings}

Let $\mu$ and $\nu$ be two probability measures on a common space $\cA$, which we assume to be finite for simplicity. A \emph{coupling} of $\mu$ and $\nu$ is a probability measure $\pi$ on $\cA^2$ which has marginals $\mu$ and $\nu$, i.e., $\pi(A \times \cA)=\mu(A)$ and $\pi(\cA \times A)=\nu(A)$ for any $A \subset \cA$.
The \emph{total-variation distance} between $\mu$ and $\nu$ is
\[ \|\mu-\nu\| := \max_{A \subset \cA} |\mu(A) - \nu(A)| = \tfrac12 \sum_{a \in \cA} |\mu(a)-\nu(a)| .\]
It is well-known that $\|\mu-\nu\|$ equals the probability that a sample from $\mu$ and~$\nu$ are different under an optimal coupling. That is, if $(Z,Z')$ is sampled from a coupling of $\mu$ and $\nu$, then
\[ \Pr(Z=Z') \le 1-\| \mu - \nu \| ,\]
and there exists a coupling for which equality holds. Noting that
\[ 1-\|\mu-\nu\|=\sum_{a \in \cA} \min\{\mu(a),\nu(a)\} ,\]
the above immediately generalizes to any number of distributions $\mu_1,\dots,\mu_k$ on $\cA$. Namely, if $(Z_1,\dots,Z_k)$ is sampled from a coupling of $\mu_1,\dots,\mu_k$, then
\begin{equation}\label{eq:def-optimal-coupling}
\Pr(Z_1=Z_2=\dots=Z_k) \le \sum_{a \in \cA} \min\{\mu_1(a),\dots,\mu_k(a)\} ,
\end{equation}
and there exists a coupling for which equality holds. A coupling for which there is equality in~\eqref{eq:def-optimal-coupling} is called an \emph{optimal coupling} of $\mu_1,\dots,\mu_k$.
When $k \ge 3$, we often refer to a coupling of $\mu_1,\dots,\mu_k$ as a \emph{grand coupling} in order to stress the fact that it is a coupling of multiple measures, and not just of two measures.

We shall be interested in grand couplings of $(P^\tau_V)_{\tau \in \Omega_{\partial V}}$ or $(P^\tau_{V,U})_{\tau \in \Omega_{\partial V}}$ for various finite sets $U \subset V \subset \Z^d$.
Note that such a grand coupling simultaneously couples samples of the Markov random field (on $V$ or $U$) under \emph{all} boundary conditions, and not just under two boundary conditions.
Recall the definition of $\gamma(V,U)$ from~\eqref{eq:def-gamma} and observe that a grand coupling $\pi$ of $(P^\tau_{V,U})_{\tau \in \Omega_{\partial V}}$ is an optimal coupling if and only if
\[ \pi\big(\omega^\tau = \omega^{\tau'}\text{ for all }\tau,\tau' \in \Omega_{\partial V}\big) = \gamma(V,U) .\]
Since any such coupling can be extended to a coupling of $(P^\tau_V)_{\tau \in \Omega_{\partial V}}$, it follows that there is also a grand coupling $\tilde{\pi}$ of $(P^\tau_V)_{\tau \in \Omega_{\partial V}}$ for which
\begin{equation}\label{eq:optimal-coupling}
\tilde\pi\big(\omega^\tau_U = \omega^{\tau'}_U\text{ for all }\tau,\tau' \in \Omega_{\partial V}\big) = \gamma(V,U) .
\end{equation}
We say that such a coupling is optimal for the marginals on $U$. In particular, such couplings exist.

\begin{remark}\label{rem:optimal-couplings}
A word of caution regarding the optimality of grand couplings: there typically does not exist an optimal coupling of $\mu_1,\dots,\mu_k$ which induces an optimal coupling of every subcollection $\mu_{i_1},\dots,\mu_{i_m}$. As an example, consider the case where $\mu_i$, for $1 \le i \le k$, is the uniform distribution on $\{1,2,\dots,k\} \setminus \{i\}$. Observe that any coupling of $\mu_1,\dots,\mu_k$ is optimal since the right-hand side of~\eqref{eq:def-optimal-coupling} is zero for this choice of $\mu_1,\dots,\mu_k$. Moreover, one may check that if $(Z_1,\dots,Z_k)$ is sampled from any coupling of $\mu_1,\dots,\mu_k$, then $\Pr(Z_i \neq Z_j) \ge 2/k$ for some $i \neq j$. In particular, since $\|\mu_i-\mu_j\|=1/(k-1)$ for $i \neq j$, there does not exist a grand coupling for which $\Pr(Z_i \neq Z_j) = \|Z_i - Z_j\|$ for all $i \neq j$. In fact, the same is true for any $2 \le m \le k-1$. Namely, for any grand coupling, $\Pr(Z_{i_1} = \dots = Z_{i_m}) \le 1-m/k$ for some $1 \le i_1<\dots<i_m \le k$, which is strictly less than $\sum_{j=1}^k \min \{\Pr(Z_{i_1}=j),\dots,\Pr(Z_{i_m}=j)\}=1-(m-1)/(k-1)$. Thus, no grand coupling induces an optimal coupling of $\mu_{i_1},\dots,\mu_{i_m}$ for all $1 \le i_1<\dots<i_m \le k$.

This example shows that it is not obvious that one can find a grand coupling of $(P^\tau_V)_{\tau \in \Omega_{\partial V}}$ which reflects ``good'' properties the subcollections $(P^\tau_V)_{\tau \in \cT}$, $\cT \subset \Omega_{\partial V}$. This will be a primary difficulty in the proof of part~\eqref{thm:main-b} of Theorem~\ref{thm:main} in Section~\ref{sec:strong-mixing}, where we will use strong spatial mixing, a property stated in terms of pairs of boundary conditions, to construct a grand coupling which has a similar property for all pairs (and other $\cT$) simultaneously.

Lastly, we mention that it has recently been shown~\cite{angel2019pairwise} that there always exists a grand coupling which induces \emph{nearly} optimal pairwise couplings, but that this is not sufficient for our purposes, as we also need the induced coupling to be nearly optimal for the marginals on arbitrary $U \subset V$.
\end{remark}

\subsection{The coupled dynamics}
\label{sec:coupled-dynamics}

We shall require a simultaneous coupling of the dynamics started from all possible starting states. Let us define these coupled dynamics formally.

The $n$-th step of the dynamics depends on three parameters: a number $p_n \in (0,1)$ indicating the probability to be active, a base block $\Delta_n \subset \Z^d$ and a grand coupling $\pi_n$ of $(P^\tau_{\Delta_n})_{\tau \in \Omega_{\partial\Delta_n}}$.
Given these parameters, we let $((A_n,B_n))_{n \ge 1}$ be a sequence of independent random variables, where, for each $n \ge 1$, $A_n$ and $B_n$ are independent, $A_n$ is a Bernoulli random variable with parameter $p_n$ and $B_n = (B_n^\tau)_{\tau \in \Omega_{\partial \Delta_n}}$ is a sample from $\pi_n$.
We place an independent copy of this sequence at each vertex of $\Z^d$ so as to obtain an \iid\ process
\[ Y=(Y_v)_{v \in \Z^d}, \qquad\text{where }Y_v := ((A_{v,n},B_{v,n}))_{n\ge 1} .\]
This will ultimately be the process from which we will obtain a finitary coding to our target Markov random field $X$.

We regard $A_{v,n}$ as indicating whether $v$ is active at time $n$. We then let $A'_{v,n}$ be the indicator of the event that $v$ is the only active vertex in a ball of radius
\begin{equation}\label{eq:r_n}
 r_n := \diam(\Delta_n)+1
\end{equation}
around $v$ at time~$n$, so that $A'_{v,n}$ indicates whether $v$ is chosen at time $n$.
Define
\begin{equation}\label{eq:U}
U_{v,n} := \Big\{ u \in v-\Delta_n : A'_{u,n}=1 \Big\} ,
\end{equation}
and observe that $|U_{v,n}| \le 1$ almost surely.
If $U_{v,n}=\emptyset$ then $v$ will not be updated at time $n$, while, if $U_{v,n}=\{u\}$, then $v$ will be updated as part of the update to the block centered at $u$.

This and the description in Section~\ref{sec:dynamics} then give rise to the random function
\[ f_n \colon \Omega \to \Omega \]
defined by
\begin{equation}\label{eq:dynamics}
f_n(\xi)_v :=
\begin{cases}
	\xi_v &\text{if }U_{v,n} = \emptyset\\
	B^\tau_{u,n}(v-u) &\text{if }U_{v,n}=\{u\}\text{ and }\tau=(\xi_{w+u})_{w \in \partial \Delta_n}
\end{cases} .
\end{equation}
It is immediate that $f_n$ is almost surely well-defined.
The function $f_n$ describes a coupling of one step of the dynamics (at time $n$) started from all possible starting states (see~\cite{diaconis1999iterated} for more on this point of view). We remark that these dynamics may be seen as a (non-time-homogeneous) probabilistic cellular atomaton (PCA).

\begin{lemma}\label{lem:distribution-preserved}
	For any $n \ge 1$, $f_n$ preserves the distribution of $X$.
	More precisely, if $X$ and $Y$ are independent, then $f_n(X)$ has the same distribution as $X$.
\end{lemma}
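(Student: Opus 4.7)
The plan is to condition on the randomness that determines the resampling blocks, and then invoke the Markov random field property of $X$ together with the marginal and independence properties of the grand coupling $\pi_n$.

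Concretely, write $C := \{u \in \Z^d : A'_{u,n}=1\}$ for the set of chosen centers and $\Delta := \bigcup_{u \in C} (u+\Delta_n)$ for the resampling region. I first need the geometric observation that, because two distinct centers $u, u' \in C$ are at graph-distance at least $r_n+1 = \diam(\Delta_n)+2$ (by the definition of ``chosen'' and~\eqref{eq:r_n}), and because $\zero \in \Delta_n$, the triangle inequality gives $\dist(u+\Delta_n,\,u'+\Delta_n) \ge 2$. Consequently the translates $\{u+\Delta_n\}_{u \in C}$ are the connected components of $\Delta$ and $\partial(u+\Delta_n) \subset \Delta^c$ for every $u \in C$.

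Next, I condition on the $\sigma$-algebra $\cG$ generated by $(A_{v,n})_{v \in \Z^d}$ together with $X_{\Delta^c}$. Under $\cG$ the set $C$, the region $\Delta$, and the boundary data $\tau_u := (X_{u+w})_{w \in \partial \Delta_n}$ for $u \in C$ are all determined. By the Markov random field property and the block separation noted above, the conditional law of $X_\Delta$ given $\cG$ factors as $\bigotimes_{u \in C} Q_u$, where $Q_u$ denotes the image of $P^{\tau_u}_{\Delta_n}$ under the translation $w \mapsto w+u$; here translation-invariance of the specification is used to identify the translated law with the conditional distribution of $X_{u+\Delta_n}$ given $X_{\partial(u+\Delta_n)}$.

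On the other hand, by~\eqref{eq:dynamics} we have $f_n(X)_{\Delta^c} = X_{\Delta^c}$, while for $u \in C$ the block $f_n(X)_{u+\Delta_n}$ equals the translate of $B^{\tau_u}_{u,n}$. Because $Y$ is independent of $X$ and the $Y_v$ are \iid, the family $(B_{u,n})_{u \in C}$ is mutually independent and independent of $X_{\Delta^c}$; since $\pi_n$ has marginals $(P^\tau_{\Delta_n})_\tau$, the conditional law of $B^{\tau_u}_{u,n}$ given $\cG$ is exactly $P^{\tau_u}_{\Delta_n}$. Thus the conditional law of $f_n(X)_\Delta$ given $\cG$ is the same product $\bigotimes_{u \in C} Q_u$. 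Since $f_n(X)$ and $X$ agree on $\Delta^c$ and have the same conditional distribution on $\Delta$, their unconditional laws agree, proving the lemma. I expect no real obstacle beyond careful translation bookkeeping; the only mildly delicate point is the block-separation claim, which is an elementary consequence of the ``chosen'' rule.
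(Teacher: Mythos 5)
The paper omits the proof of this lemma, saying only that it is ``standard (and straightforward)''; your argument is precisely the standard argument the authors have in mind, and it is correct. You condition on the activity variables and $X_{\Delta^c}$, use the geometric separation of chosen blocks to invoke the Markov property, and use independence of $Y$ from $X$ together with the marginal property of $\pi_n$ to match the conditional resampling law — exactly as intended. One tiny imprecision worth flagging: the translates $\{u+\Delta_n\}_{u\in C}$ need not be the connected components of $\Delta$, since the base block $\Delta_n$ is only a finite set containing $\zero$ and is not assumed connected; but this does not affect your argument, because what you actually need and correctly establish is that the translates are pairwise at graph-distance at least $2$, so that each $\partial(u+\Delta_n)$ lies in $\Delta^c$ and the conditional law of $X_\Delta$ given $\cG$ factors over the blocks $u+\Delta_n$.
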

\begin{proof}
The proof is standard (and straightforward) and thus omitted.
\end{proof}

\subsection{Constructing finitary codings using coupling-from-the-past}
\label{sec:cftp}

Let $\xi \in \Omega$ be the starting configuration.
The usual dynamics is loosely described by the evolution:
\[ \xi ~~\longrightarrow~~ f_1(\xi) ~~\longrightarrow~~ (f_2 \circ f_1)(\xi) ~~\longrightarrow~~ \cdots ~~\longrightarrow~~ (f_n \circ f_{n-1} \circ \cdots \circ f_2 \circ f_1)(\xi) ~~\longrightarrow~~ \cdots .\]
In other words, one starts with the configuration $\xi$ at time zero and then, by iteratively composing functions on the left, follows its trajectory forward in time until some positive time $n$.
The method of coupling-from-the-past suggests to reverse this process by starting with the configuration $\xi$ at some negative time and then following its trajectory (still forward in time) until time zero. Rather than using negative times, one may instead realize this idea by composing functions on the right. This leads us to consider the evolution:
\[  \cdots ~~\longleftarrow~~ (f_1 \circ f_2 \circ \cdots \circ f_{n-1} \circ f_n)(\xi) ~~\longleftarrow~~ \cdots ~~\longleftarrow~~ (f_1 \circ f_2)(\xi) ~~\longleftarrow~~ f_1(\xi) ~~\longleftarrow~~ \xi .\]
The difference between the two evolutions is subtle, but significant.
In the first process, when $(f_n \circ f_{n-1} \circ \cdots \circ f_2 \circ f_1)(\cdot)_v$ becomes a constant (but still random) function, even though the evolution ceases to depend on the starting configuration, it still continues to evolve in time. The main advantage of the second process is that once $(f_1 \circ f_2 \circ \cdots \circ f_{n-1} \circ f_n)(\cdot)_v$ becomes constant, the evolution not only becomes independent of the starting configuration, but also stabilizes.

The following proposition summarizes the usefulness of coupling-from-the-past for our purposes.
Denote
\[ \omega^n := f_1 \circ f_2 \circ \cdots \circ f_{n-1} \circ f_n \]
and
\begin{equation}\label{eq:coalescence-time}
T_v := \min \big\{ n \ge 0 : (\omega^n(\cdot))_v\text{ is a constant function}\big\}.
\end{equation}

\begin{prop}\label{prop:coupling-from-past}
	Suppose that $T_v$ is almost surely finite for all $v$. Then the random field
	\[ \omega^*=(\omega^{T_v}(\cdot)_v)_{v \in \Z^d} \]
	has the same distribution as $X$.
\end{prop}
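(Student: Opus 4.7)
My plan is to introduce an auxiliary sample $X$ of the target Markov random field, independent of $Y$, and to compare $\omega^*$ with the forward trajectories $\omega^n(X)$, which preserve the distribution of $X$ by Lemma~\ref{lem:distribution-preserved}.

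The first step is the key monotonicity observation about coalescence: once $\omega^{T_v}(\cdot)_v$ is a constant function, say equal to $c$, then for every $n \ge T_v$ the map $\omega^n(\cdot)_v$ is the same constant. Indeed, because the sequence is composed on the right, for $n \ge T_v$ we have $\omega^n = \omega^{T_v} \circ (f_{T_v+1} \circ \cdots \circ f_n)$, so for every $\xi \in \Omega$,
\[ \omega^n(\xi)_v = \omega^{T_v}\bigl((f_{T_v+1} \circ \cdots \circ f_n)(\xi)\bigr)_v = c = \omega^*_v . \]
In particular, $\omega^n(X)_v = \omega^*_v$ for every $n \ge T_v$, regardless of the starting configuration $X$.

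The second step is to show that $\omega^n(X)$ has the same law as $X$ for every $n$. Take $X$ independent of $Y$, and note that since $f_n$ depends only on $(A_{v,n},B_{v,n})_{v \in \Z^d}$, the random function $f_n$ is independent of both $X$ and of $f_1,\dots,f_{n-1}$. By Lemma~\ref{lem:distribution-preserved}, $f_n(X) \eqd X$, and $f_n(X)$ is independent of $f_1,\dots,f_{n-1}$ (as a function of $X$ and of the $n$-th layer of randomness, both of which are independent of the earlier layers). Iterating Lemma~\ref{lem:distribution-preserved} downward from $n$ to $1$ gives $\omega^n(X) = (f_1 \circ \cdots \circ f_n)(X) \eqd X$ for every $n \ge 1$.

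The third step combines the two. Fix a finite $V \subset \Z^d$ and let $T_V := \max_{v \in V} T_v$, which is almost surely finite by hypothesis. By the first step, for every $n \ge T_V$ we have $\omega^n(X)_V = \omega^*_V$, so $\omega^n(X)_V \to \omega^*_V$ almost surely as $n \to \infty$. Since $S^V$ is finite and $\omega^n(X) \eqd X$ for every $n$, for any $\eta \in S^V$ the limit
\[ \Pr(\omega^*_V = \eta) = \lim_{n \to \infty} \Pr(\omega^n(X)_V = \eta) = \Pr(X_V = \eta) \]
shows that $\omega^*_V \eqd X_V$. As this holds for every finite $V$, $\omega^*$ and $X$ have the same finite-dimensional distributions and hence the same distribution.

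I do not expect any serious obstacle here: the only care needed is in step two, where one must check that the independence structure of the $(A_{v,n},B_{v,n})$ really lets one apply Lemma~\ref{lem:distribution-preserved} inductively, and in interpreting the composition order so that coalescence persists once it occurs (this is precisely why coupling-from-the-past uses right-composition rather than the usual forward evolution).
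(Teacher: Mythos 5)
Your proof is correct and follows essentially the same route as the paper: introduce an independent sample $X$, observe that right-composition makes $\omega^n(X)_v$ stabilize at $\omega^*_v$ once $n \ge T_v$, note that $\omega^n(X) \eqd X$ by iterating Lemma~\ref{lem:distribution-preserved}, and pass to the limit over finite $V$. You simply spell out the two points the paper leaves implicit (the reason stabilization persists under further right-composition, and the inductive application of the lemma), which is reasonable but does not change the argument.
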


\begin{proof}
	Let $\xi$ be sampled from the distribution of $X$, independently of $Y$. Fix a finite $V \subset \Z^d$.	
	Since $\omega^t(\cdot)_v=\omega^*_v$ for all $t \ge T_v$, we have
	$\omega^t(\xi)_V = \omega^*_V$ for all $t \ge \max_{v \in V} T_v$.
	Since, by Lemma~\ref{lem:distribution-preserved}, $\omega^t(\xi)_V$ has the same distribution as $X_V$ for all $t \ge 0$, and since $\omega^t(\xi)_V=\omega^*_V$ for sufficiently large~$t$, we conclude that $\omega^*_V$ has the same distribution as $X_V$.
As $V$ was arbitrary, the proposition follows.
\end{proof}

\begin{prop}\label{prop:coding-radius}
Suppose that $T_v$ is almost surely finite for all $v$. Then $\omega^*$ defines a finitary coding from $Y$ to $X$ whose coding radius $R$ satisfies
\[ R \le 2(r_1 + r_2 + \cdots + r_{T_\zero}) .\]
\end{prop}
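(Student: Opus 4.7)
The plan is to verify three properties of $\omega^*$ in sequence: that it is well-defined as a measurable function of $Y$ alone (not of any specific starting configuration $\xi$), that this function is translation-equivariant with $\omega^*(Y) \eqd X$, and finally that its coding radius at $Y$ obeys the claimed bound. The first is immediate from the definition of $T_v$, which forces $\omega^{T_v}(\cdot)_v$ to be a constant function on $\Omega$, so $\omega^*_v$ is unambiguously a function of $Y$. Translation-equivariance follows from the stationarity of $Y$ together with the translation-equivariant nature of the update map~\eqref{eq:dynamics}, and the distributional identity $\omega^*(Y) \eqd X$ is exactly the content of Proposition~\ref{prop:coupling-from-past}.

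The substance of the proof is a ``light cone'' estimate for the composition $F_k := f_1 \circ f_2 \circ \cdots \circ f_k$. I would prove by induction on $k \ge 0$ the following claim: for every $v \in \Z^d$ and every $\xi \in \Omega$, the value $F_k(\xi)_v$ is determined by the restriction of $\xi$ to $\{w \in \Z^d : |w-v| \le r_1 + \cdots + r_k\}$ and, for each $1 \le j \le k$, by the restriction of $Y_{\cdot,j}$ to $\{w \in \Z^d : |w-v| \le r_1 + \cdots + r_{j-1} + 2r_j - 1\}$; both sets sit inside $\{w : |w-v| \le 2(r_1 + \cdots + r_k)\}$. The base case $k=0$ is trivial. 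For the inductive step one reads off from~\eqref{eq:dynamics} that each single step $f_n(\eta)_w$ depends on $\eta$ only within $\ell_1$-distance $r_n$ of $w$ (all queried positions lie in $\{w\} \cup (u + \partial \Delta_n)$ for some $u \in w - \Delta_n$, and a short triangle-inequality calculation using $\zero \in \Delta_n$ and $\diam(\Delta_n) = r_n - 1$ places this set within distance $r_n$ of $w$), and on $Y_{\cdot,n}$ only within distance $2r_n - 1$ of $w$ (the extra factor comes from the ``chosen'' indicator $A'_{u,n}$, whose evaluation queries the activity at every site within distance $r_n$ of some candidate $u \in w - \Delta_n$, and $u$ itself lies within distance $r_n - 1$ of $w$). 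Composing $F_{k-1}$ with $f_k$, the spatial ranges add up and the inductive step follows.

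Applying the claim at $v = \zero$ and $k = T_\zero$, the $\xi$-dependence is vacuous by the very definition of $T_\zero$, so $\omega^*_\zero$ is determined by $Y$ restricted to $\{w : |w| \le 2(r_1 + \cdots + r_{T_\zero})\}$. To conclude, I would match this to the formal definition of the coding radius: if $Y'$ coincides with $Y$ on that ball, the light-cone bound, applied to $Y'$, shows that the function $\omega^{T_\zero(Y)}(\cdot)_\zero$ computed from $Y'$ equals the same function computed from $Y$, which is the constant $\omega^*_\zero(Y)$. Hence $T_\zero(Y') \le T_\zero(Y)$, and the identity $F_m = F_n \circ f_{n+1} \circ \cdots \circ f_m$ shows that once $F_n(\cdot)_\zero$ is constant it stays the same constant for every $m \ge n$, yielding $\omega^*_\zero(Y') = \omega^*_\zero(Y)$, as required.

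The only step requiring real care is the bookkeeping in the inductive light-cone calculation: one must keep separate the locality radius for the dynamical input (namely $r_n$, from the boundary-data lookup $u + \partial \Delta_n$) from the locality radius for the driving noise $Y_{\cdot,n}$ (namely $2r_n - 1$, inflated because the ``chosen'' indicator examines activities around each candidate $u$ rather than around $w$ itself). This asymmetry is what produces the factor of $2$ in $2(r_1 + \cdots + r_{T_\zero})$; everything else in the argument is essentially unpacking definitions.
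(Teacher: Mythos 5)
Your proof is correct and follows the same route as the paper: an inductive light-cone estimate on the dependence region of $\omega^n(\cdot)_\zero$ in both $\xi$ and $Y$, invoked at $n = T_\zero$. Two places where your version is actually a bit more careful than the paper's. First, you track $\ell_1$-balls directly, which matches the $\ell_1$-definition of the coding radius and gives exactly the stated constant; the paper phrases the locality estimate in terms of the $\ell_\infty$-boxes $\Lambda_r$, which read literally would lose a factor of $d$ in the final bound. Second, you explicitly resolve the circularity coming from $T_\zero$ being itself a function of $Y$, via the two observations that $T_\zero(Y')\le T_\zero(Y)$ when $Y'$ agrees with $Y$ on the relevant ball and that $\omega^n(\cdot)_\zero$ remains the same constant for all $n\ge T_\zero$; the paper leaves this step implicit in the phrase ``the bound on $R$ follows.''
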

\begin{proof}
The fact that $\omega^*$ defines a coding from $Y$ to $X$ is a consequence of Proposition~\ref{prop:coupling-from-past} and the observation that $f_n$ is translation-equivariant by~\eqref{eq:dynamics}.
We proceed to bound the coding radius.
By~\eqref{eq:U} and~\eqref{eq:dynamics}, for any $n \ge 1$ and $v \in \Z^d$, $f_n(\xi)_v$ depends only on $\xi_{v+\Lambda_{2r_n}}$ and $(Y_{v+u,n})_{u \in \Lambda_{2r_n}}$. Thus, for any $k \ge 1$, $f_n(\xi)_{\Lambda_k}$ depends only on $\xi_{\Lambda_{k+2r_n}}$ and $(Y_{u,n})_{u \in \Lambda_{k+2r_n}}$. Proceeding by induction, we see that
\begin{equation}\label{eq:dependency}
\omega^n(\xi)_\zero\text{ depends only on }\xi_{\Lambda_{2r_1+\cdots+2r_n}}\text{ and }(Y_{u,i})_{u \in \Lambda_{2r_1+\cdots+2r_i}, 1 \le i \le n} .
\end{equation}
The bound on $R$ follows.
\end{proof}

\subsection{Outline of proof of theorems}
\label{sec:outline}

Proposition~\ref{prop:coding-radius} provides a recipe for constructing a finitary coding from $Y$ to $X$.
Thus, the method of proof of both parts of Theorem~\ref{thm:main} (and of Theorem~\ref{thm:ffiid}) consists of showing that a suitable choice of $(p_n,\Delta_n,\pi_n)_{n \ge 1}$ yields a coding whose coding radius has the desired properties (either power-law tails for part~\eqref{thm:main-a} of the theorem or exponential tails for part~\eqref{thm:main-b}).

For part~\eqref{thm:main-a} of the theorem, we shall choose a rapidly growing sequence of blocks $\Delta_n$. The probabilities $p_n$ will be chosen to be of order $|\Delta_n|^{-1}$ so that a vertex has a constant probability of being updated at any step. Moreover, the grand coupling $\pi_n$ will be chosen so that there is also a constant probability to reach the stopping time $T_\zero$ at each step -- the existence of which is made possible by the assumption~\eqref{eq:positive-coupling-prob}. This will give exponential tails for $T_\zero$, but since $r_n$ will grow exponentially fast, it will yield heavy power-law tails for the coding radius.

For part~\eqref{thm:main-b} of the theorem, we shall choose $\Delta_n$ to be a large, but fixed (i.e., independent of~$n$), box $\Lambda_{n_0}$. The probabilities $p_n$ will also be fixed, say to $1/2$. The choice of $n_0$, and of course the assumption of exponential strong spatial mixing, will guarantee the existence of a grand coupling with ``contracting'' properties, which will in turn ensure that the stopping time $T_\zero$ is reached exponentially fast, thus leading to exponential tails for the coding radius.

\section{Weak spatial mixing implies \ffiid\ with power-law tails}
\label{sec:weak-mixing}

In this section, we prove Theorem~\ref{thm:ffiid} and part~\eqref{thm:main-a} of Theorem~\ref{thm:main}. Let us first explain how Theorem~\ref{thm:ffiid} implies part~\eqref{thm:main-a} of Theorem~\ref{thm:main}.
For this, we require an additional notion of spatial mixing.
A Markov random field and its specification are said to satisfy \emph{ratio weak spatial mixing} with rate function $\rho$ if, for any finite sets $U \subset V \subset \Z^d$,
\begin{equation}\label{eq:ratio-WSM-def}
\max_{\omega \in \Omega_U} \left(1 - \frac{P^{\tau'}_{V,U}(\omega)}{P^\tau_{V,U}(\omega)} \right) \le |U| \cdot \rho(\dist(U,\partial V)) \qquad\text{for any }\tau,\tau' \in \Omega_{\partial V},
\end{equation}
where the maximum is only over those $\omega$ having $P^\tau_{V,U}(\omega)>0$.
It may strike the reader as strange that there are parenthesis in the left-hand side of~\eqref{eq:ratio-WSM-def} and not absolute value. However, the two are essentially equivalent, since when the right-hand side of~\eqref{eq:ratio-WSM-def} is, say, less than $1/2$, the parenthesis can be replaced with absolute value at the price of introducing a multiplicative factor of 2 in the right-hand side.
The difference between the two is then only apparent when the right-hand side is not small. In particular, the definition given in~\eqref{eq:ratio-WSM-def} does not convey any information when the right-hand side is greater than 1. For this reason, we find this definition more convenient.
Ratio weak spatial mixing is a priori stronger than weak spatial mixing, as~\eqref{eq:ratio-WSM-def} clearly implies~\eqref{eq:WSM-def}. Surprisingly, exponential weak spatial mixing is equivalent to exponential ratio weak spatial mixing (see~\cite[Theorem~3.3]{alexander1998weak} or Section~\ref{sec:ratio-SSM} below for a short self-contained proof of an analogous statement for strong spatial mixing).
Observe that exponential ratio weak spatial mixing easily implies that, for any fixed $0<\delta<1$, for sufficiently large $n$, we have
\begin{equation}\label{eq:delta-ratio-WSM-def}
\min_{\tau,\tau' \in \Omega_{\partial \Lambda_n}} \min_{\omega \in \Omega_{\Lambda_{\delta n}}} ~ \frac{P^{\tau'}_{\Lambda_n,\Lambda_{\delta n}}(\omega)}{P^\tau_{\Lambda_n,\Lambda_{\delta n}}(\omega)} \ge \delta ,\end{equation}
where the second minimum is over $\omega$ having $P^\tau_{\Lambda_n,\Lambda_{\delta n}}(\omega)>0$ (in fact, the left-hand side in~\eqref{eq:delta-ratio-WSM-def} tends to 1 exponentially fast as $n \to \infty$).
It is immediate that~\eqref{eq:delta-ratio-WSM-def} implies~\eqref{eq:positive-coupling-prob}.

We now turn to the proof of Theorem~\ref{thm:ffiid}.
Let $X$ be a translation-invariant Markov random field satisfying~\eqref{eq:positive-coupling-prob2}. It follows from~\eqref{eq:positive-coupling-prob2} that there exist $0<\delta,\epsilon<1/3$ and a sequence $(\ell_n)_{n \ge 1}$ of positive integers such that, for all $n \ge 1$,
\begin{equation}\label{eq:ell_n}
\delta \ell_{n+1} > 4d(\ell_1+\cdots+\ell_n)
\end{equation}
and
\begin{equation}\label{eq:RSW2}
\gamma(\Lambda_{\ell_n},\Lambda_{3\delta \ell_n}) > \epsilon.
\end{equation}
In addition, if~\eqref{eq:positive-coupling-prob} holds, then $(\ell_n)$ can be chosen to also satisfy
\begin{equation}\label{eq:ell_n_exp_growth}
\ell_n \le e^{cn} \qquad\text{for some }c>0\text{ and all }n \ge 1.
\end{equation}
Set $\Delta_n := \Lambda_{\ell_n}$ and $p_n := \ell_n^{-d}$. Recall~\eqref{eq:r_n} and note that $r_n = 2d\ell_n$.
Let $\pi_n$ be any grand coupling of $(P^\tau_{\Delta_n})_{\tau \in \Omega_{\partial \Delta_n}}$ which is optimal for the marginals on $\Lambda_{3\delta \ell_n}$, so that, by~\eqref{eq:RSW2} and~\eqref{eq:optimal-coupling},
\begin{equation}\label{eq:coupling}
\pi_n(\cE_n) > \epsilon , \qquad\text{where }\cE_n := \Big\{ \omega^\tau_{\Lambda_{3\delta \ell_n}}=\omega^{\tau'}_{\Lambda_{3\delta \ell_n}}\text{ for all }\tau,\tau' \in \Omega_{\partial \Delta_n} \Big\} .
\end{equation}

\begin{figure}
	\centering
	\begin{subfigure}[t]{.31\textwidth}
		\centering
		\includegraphics[scale=0.52]{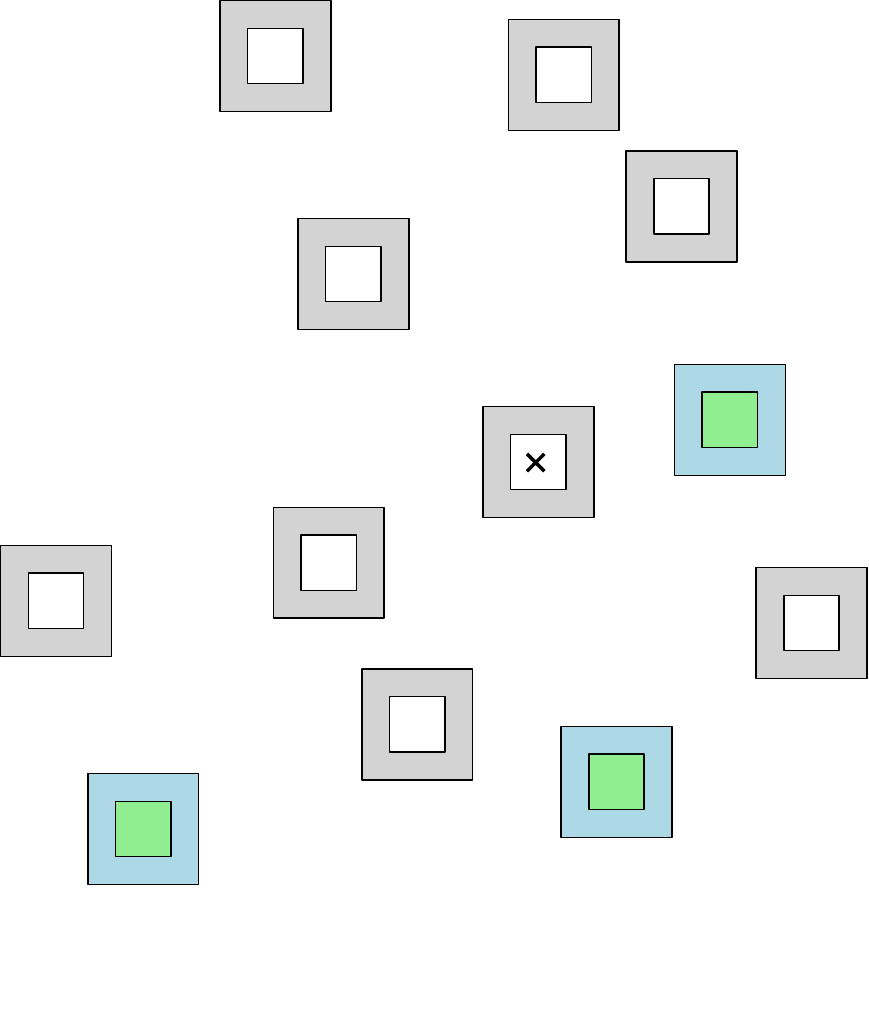}
		\caption{$n=1$.}
		\label{fig:dynamics1}
	\end{subfigure}%
	\begin{subfigure}{15pt}
		\quad
	\end{subfigure}%
	\begin{subfigure}[t]{.31\textwidth}
		\centering
		\includegraphics[scale=0.52]{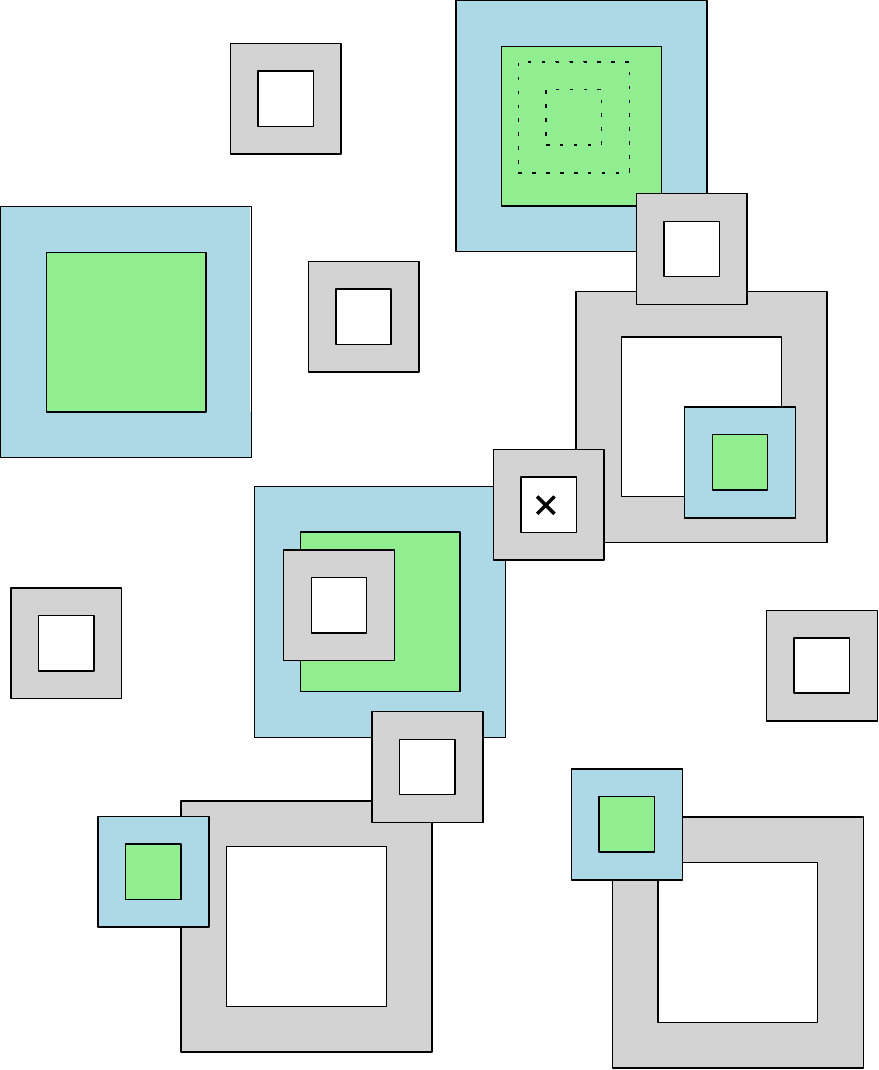}
		\caption{$n=2$.}
		\label{fig:dynamics2}
	\end{subfigure}%
	\begin{subfigure}{15pt}
		\quad
	\end{subfigure}%
	\begin{subfigure}[t]{.31\textwidth}
		\centering
		\includegraphics[scale=0.52]{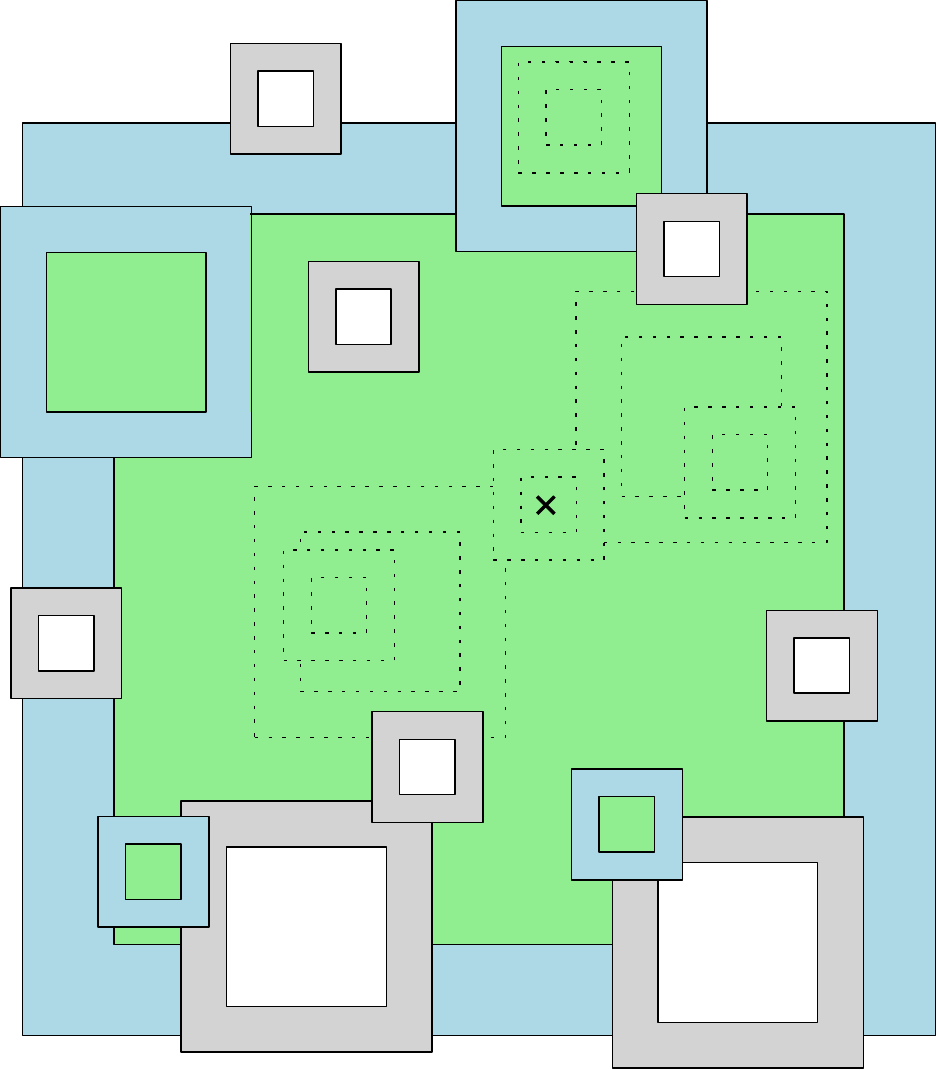}
		\caption{$n=3$.}
		\label{fig:dynamics3}
	\end{subfigure}
	\caption{At each step of the dynamics, the blocks of the chosen vertices (i.e., those $v$ having $A'_{v,n}=1$) are updated. The updated blocks $v+\Lambda_{\ell_n}$ and their inner regions $v+\Lambda_{3\delta \ell_n}$ are depicted (not drawn to scale). The blocks which successfully shield their inner regions from the outside boundary condition (i.e., those for which $B_{v,n} \in \cE_n$) are depicted in light blue and the ones which fail to do so in gray.
	The vertex $\zero$ is indicated by $\times$. The green shade indicates regions of vertices $v$ for which $T'_v \le n$, that is, vertices whose value at time 0 is guaranteed not to depend on the starting configuration at time $n$. In particular, $T'_\zero$ is the first time at which the origin becomes green, which is $T'_\zero=3$ in the depicted case.}
	\label{fig:dynamics}
\end{figure}

In light of Proposition~\ref{prop:coding-radius}, Theorem~\ref{thm:ffiid} boils down to establishing bounds on the tails of $T_v$. These are provided by the following two lemmas.
Define
\[ T'_v := \min \Big\{ n \ge 0 : U_{v,n}=\{u\}\text{ and }B_{u,n}\in\cE_n\text{ for some $u \in v+\Lambda_{\delta \ell_n}$} \Big\} .\]
In words, $T'_v$ is the first time at which $v$ is updated by the dynamics in such a way that (i) it falls deep within the inner region of the updated block and (ii) the updated block successfully couples its inner region (thus shielding its inner region from the boundary conditions). See Figure~\ref{fig:dynamics}.

\begin{lemma}
	$T'_v$ has exponential tails. In particular, $T'_v$ is almost surely finite.
\end{lemma}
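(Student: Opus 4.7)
The plan is to show that at each step $n$, the single-step event driving $T'_v$ to be reached occurs with probability bounded below by a positive constant independent of $n$, and that these single-step events are independent across $n$. This will immediately give a stochastic domination by a geometric random variable, hence exponential tails.

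The first observation I would make is that at most one vertex inside $v+\Lambda_{\delta\ell_n}$ can be chosen at step $n$. Indeed, any two vertices in $v+\Lambda_{\delta\ell_n}$ lie at $\ell_1$-distance at most $2d\delta\ell_n$, which is strictly less than $r_n=2d\ell_n+1$ (since $\delta<1/3<1$), and any two chosen vertices must be at distance greater than $r_n$ by definition of ``chosen''. So the random variable
\[ N_n := \big|\{u\in v+\Lambda_{\delta\ell_n} : A'_{u,n}=1\}\big| \]
takes values in $\{0,1\}$, and hence $\Pr(N_n=1)=\mathbb{E}[N_n]$.

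Next I would compute (or rather lower-bound) this expectation. By linearity and translation invariance,
\[ \mathbb{E}[N_n] = |\Lambda_{\delta\ell_n}|\cdot p_n\cdot(1-p_n)^{|\Lambda_{r_n}|-1}. \]
With $p_n=\ell_n^{-d}$ and $r_n=2d\ell_n+1$, we have $|\Lambda_{\delta\ell_n}|\cdot p_n$ bounded below by a constant of order $(2\delta)^d$, while $(1-p_n)^{|\Lambda_{r_n}|-1}$ is bounded below by a constant close to $e^{-(4d)^d}$ uniformly over $n$ (using $\ell_n\ge\ell_1$ and $\ell_n\to\infty$). Thus $\mathbb{E}[N_n]\ge c_0$ for some $c_0=c_0(d,\delta)>0$ independent of $n$, giving $\Pr(N_n=1)\ge c_0$.

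Now consider the event
\[ F_n := \big\{N_n=1\text{ and, denoting by }u_n\text{ the unique chosen vertex in }v+\Lambda_{\delta\ell_n},\ B_{u_n,n}\in\cE_n\big\}. \]
When $F_n$ occurs, one has $u_n\in v+\Lambda_{\delta\ell_n}\subset v-\Delta_n$ and $u_n$ is the unique chosen vertex in $v-\Delta_n$ (again because the diameter of $v-\Delta_n$ equals $2d\ell_n<r_n$), so $U_{v,n}=\{u_n\}$. Hence $F_n\subset\{T'_v\le n\}$. Since $A_{\cdot,n}$ and $B_{\cdot,n}$ are independent and the grand coupling $\pi_n$ is chosen to satisfy $\pi_n(\cE_n)>\epsilon$ (see~\eqref{eq:coupling}), we obtain $\Pr(F_n)\ge c_0\epsilon$.

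Finally, the events $(F_n)_{n\ge 1}$ are mutually independent, because $F_n$ is a measurable function of $(A_{w,n},B_{w,n})_{w\in\Z^d}$ and the random variables $(A_{\cdot,n},B_{\cdot,n})$ are independent across $n$ by construction of $Y$. Therefore
\[ \Pr(T'_v>n)\le\Pr\Big(\bigcap_{k=1}^n F_k^c\Big)=\prod_{k=1}^n\Pr(F_k^c)\le(1-c_0\epsilon)^n, \]
which is the desired exponential tail bound. The only slightly delicate point is the geometric comparison at the start that forces $N_n\in\{0,1\}$, which makes the first-moment estimate tight; everything else is routine Poisson-point-process style accounting.
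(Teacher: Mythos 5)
Your proof is correct and follows essentially the same route as the paper's: define the per-step success event, lower-bound its probability by a constant uniform in $n$ (using that the number of active/chosen vertices in the relevant boxes is binomial with mean $\Theta(1)$, together with the independence between the $A$'s and the $B$'s and the bound $\pi_n(\cE_n)>\epsilon$), and then exploit independence across $n$ to obtain geometric decay. Your observation that $N_n\in\{0,1\}$ so that $\Pr(N_n=1)=\E[N_n]$ is a slightly slicker way to package the lower bound than the paper's (which restricts to the sub-event of exactly one active vertex in $\Lambda_{\delta\ell_n}$ and none in a surrounding annulus); the only wrinkle is that the exponent in your expression for $\E[N_n]$ should be the size of the $\ell_1$-ball $\{w: 0<|w|\le r_n\}$ rather than $|\Lambda_{r_n}|-1$, but since $|\Lambda_{r_n}|$ overcounts and $1-p_n<1$, the inequality you actually need still holds.
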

\begin{proof}
	Fix $n \ge 1$. Let $\cF_n$ be the event that $U_{\zero,n}=\{u\}$ and $B_{u,n}\in\cE_n$ for some $u\in \Lambda_{\delta \ell_n}$. Then, using that $\{(A_{u,n},B_{u,n})\}_{u \in \Z^d}$ are \iid, that $A_{u,n}$ and $B_{u,n}$ are independent, and~\eqref{eq:U},
\begin{align*}
	\Pr(\cF_n) &\ge \Pr\big(B_{\zero,n}\in\cE_n\big) \cdot \Pr\Big(\big|\big\{u \in \Lambda_{\delta \ell_n} : A_{u,n}=1\big\}\big| = 1 \Big) \cdot \Pr\Big(\big\{u \in \Lambda_{\delta\ell_n+r_n} : A_{u,n}=1\big\} = \emptyset \Big) \\
	&= \pi_n(\cE_n) \cdot \Pr\Big(\text{Bin}\big(|\Lambda_{\delta \ell_n}|, \ell_n^{-d}\big) = 1\Big) \cdot \Pr\Big(\text{Bin}\big(|\Lambda_{\delta\ell_n+r_n}|, \ell_n^{-d}\big) = 0\Big) \ge c > 0 ,
\end{align*}
where we used~\eqref{eq:coupling} and the fact that $|\Lambda_{\delta \ell_n}|=\Theta(\ell_n^d)$ and $|\Lambda_{\delta \ell_n+r_n}|=\Theta(\ell_n^d)$.
Thus, by translation invariance, $\Pr(T'_v>n) = \Pr(T'_\zero>n) = \Pr(\cF_1^c) \cdots \Pr(\cF_n^c) \le (1-c)^n$.
\end{proof}

\begin{lemma}
$T_v \le T'_v$ almost surely.
\end{lemma}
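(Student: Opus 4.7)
The plan is to track, for each $n \ge 0$, a (random) ``dependence region'' $D^{(n)} \subseteq \Z^d$ with the property that $\omega^n(\xi)_v$ is determined by $\xi|_{D^{(n)}}$. Set $D^{(0)} := \{v\}$ and, using the identity $\omega^n = \omega^{n-1} \circ f_n$, define $D^{(n)}$ recursively from $D^{(n-1)}$ by replacing each $w \in D^{(n-1)}$ with the set of coordinates on which $f_n(\cdot)_w$ can depend. Inspecting~\eqref{eq:dynamics}, this replacement is $\{w\}$ if $U_{w,n} = \emptyset$; it is empty if $U_{w,n} = \{u\}$ with $B_{u,n} \in \cE_n$ and $w - u \in \Lambda_{3\delta\ell_n}$ (the boundary condition becomes irrelevant); and it is $u + \partial \Delta_n$ otherwise. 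The goal then reduces to showing $D^{(T'_v)} = \emptyset$, since this already forces $\omega^{T'_v}(\cdot)_v$ to be constant.

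The first step is an a priori geometric bound
\[ D^{(n)} ~\subseteq~ v + \Lambda_{4d(\ell_1 + \cdots + \ell_n)}, \]
established by induction. The recursive step only enlarges $D^{(n-1)}$ by replacing some $w$ by $u + \partial \Delta_n$ with $u \in w - \Delta_n = w - \Lambda_{\ell_n}$, and each such replacement adds vertices at graph distance at most $2d\ell_n + 1$ from $w$; summing, every vertex of $D^{(n)}$ lies at graph distance at most $(2d+1)(\ell_1 + \cdots + \ell_n) \le 4d(\ell_1 + \cdots + \ell_n)$ from $v$, which is contained in the $\ell_\infty$-box of the same radius. Combined with the gap condition~\eqref{eq:ell_n}, this yields $D^{(T'_v - 1)} \subseteq v + \Lambda_{\delta \ell_{T'_v}}$.

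It then remains to show that the single ``good'' event at time $n := T'_v$ wipes out the entire remaining dependence region. By definition of $T'_v$, there is $u \in v + \Lambda_{\delta \ell_n}$ with $U_{v,n} = \{u\}$ and $B_{u,n} \in \cE_n$. For any $w \in D^{(n-1)} \subseteq v + \Lambda_{\delta \ell_n}$ one has $\|w - u\|_\infty \le 2\delta \ell_n < \ell_n$, so $u \in w - \Delta_n$. If some active vertex $u' \neq u$ also lay in $w - \Delta_n$, it would satisfy
\[ \|u - u'\|_1 ~\le~ \|u - w\|_1 + \|w - u'\|_1 ~\le~ d\ell_n(1 + 2\delta) ~<~ 2d\ell_n = r_n, \]
contradicting the fact that $u$ is chosen. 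Hence $U_{w,n} = \{u\}$, and since $\|w - u\|_\infty \le 2\delta\ell_n < 3\delta\ell_n$, the event $B_{u,n} \in \cE_n$ forces $f_n(\cdot)_w$ to be constant. Therefore $D^{(T'_v)} = \emptyset$, which is exactly $T_v \le T'_v$.

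The main subtle point will be the combinatorial check in the last paragraph: the \emph{same} chosen vertex $u$ has to service every point of $D^{(T'_v - 1)}$. This is precisely the reason why~\eqref{eq:ell_n} is stated with the generous factor $4d$ on the right-hand side (keeping the dependence region well inside the inner $\delta\ell_n$-box) and why $\delta$ is taken strictly less than $1/3$ (so that $1 + 2\delta < 2$ ensures no competing chosen vertex can exist).
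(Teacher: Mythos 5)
Your proof is correct and follows essentially the same approach as the paper. You track a ``dependence region'' $D^{(n)}$ recursively, whereas the paper invokes the deterministic bound from~\eqref{eq:dependency} (established earlier in Proposition~\ref{prop:coding-radius}) to conclude that $\omega^{n-1}(\cdot)_\zero$ depends only on the restriction to $\Lambda_{\delta\ell_n}$; then both arguments observe that the single good block event at time $T'_v$ couples everything in that box. Your per-vertex check that $U_{w,n}=\{u\}$ for every $w$ in the dependence region is a slightly more explicit version of the paper's observation that $U_{v,n}=\{u\}$ for all $v \in u+\Lambda_{\ell_n}$. One small inaccuracy in your closing remark: the chosen-vertex uniqueness only needs $1+2\delta<2$, i.e.\ $\delta<1/2$; the constraint $\delta<1/3$ is instead what guarantees that the inner region $\Lambda_{3\delta\ell_n}$ fits inside the block $\Lambda_{\ell_n}$. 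This does not affect the validity of your argument, since $\delta<1/3<1/2$.
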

\begin{proof}
By translation invariance, it suffices to prove that $T_\zero \le T'_\zero$ almost surely.
By the definition of $T_\zero$, it suffices to show that $\omega^{T'_\zero}(\xi)_\zero$ does not depend on the starting state $\xi$.

By~\eqref{eq:dependency}, we have that $\omega^{n-1}(\xi)_\zero$ depends on $\xi$ only through $\xi_{\Lambda_{2r_1+\dots+2r_{n-1}}}$. By~\eqref{eq:ell_n} and since $r_n=2d\ell_n$, we have that $\omega^{n-1}(\xi)_\zero$ depends on $\xi$ only through $\xi_{\Lambda_{\delta \ell_n}}$.
Since $\omega^n(\xi)_\zero = \omega^{n-1}(f_n(\xi))_\zero$, we conclude that $\omega^n(\xi)_\zero$ depends on $\xi$ only through $f_n(\xi)_{\Lambda_{\delta \ell_n}}$.

Suppose now that $U_{\zero,n}=\{u\}$ and $B_{u,n}\in\cE_n$ for some $n \ge 1$ and $u\in \Lambda_{\delta \ell_n}$.
Observe that $U_{v,n} = \{u\}$ for all $v \in u+\Lambda_{\ell_n}$. Thus, by~\eqref{eq:dynamics}, $(f_n(\xi)_{w+u})_{w \in \Lambda_{\ell_n}} = B^\tau_{u,n}$ for some $\tau \in \Omega_{\partial \Lambda_{\ell_n}}$ which depends on $\xi$. But since $B_{u,n}\in\cE_n$, \eqref{eq:coupling} implies that $(B^\tau_{u,n})_{\Lambda_{3\delta \ell_n}}$ does not depend on $\tau$. Hence, $(f_n(\xi)_{w+u})_{w \in \Lambda_{3\delta \ell_n}}$, or equivalently, $f_n(\xi)_{u+\Lambda_{3\delta \ell_n}}$, does not depend on $\xi$. Since $u \in \Lambda_{\delta \ell_n}$, we have $\Lambda_{\delta \ell_n} \subset u+\Lambda_{3\delta \ell_n}$, and thus, $f_n(\xi)_{\Lambda_{\delta \ell_n}}$ does not depend on $\xi$.
We conclude that $\omega^{T'_\zero}(\xi)_\zero$ does not depend on $\xi$.
\end{proof}

\begin{proof}[Proof of Theorem~\ref{thm:ffiid}]
In light of Proposition~\ref{prop:coding-radius}, the first part of Theorem~\ref{thm:ffiid} follows immediately from the fact that $T_v$ is almost surely finite, as is shown by the above two lemmas. Similarly, for the second part of the theorem, it suffices to show that~\eqref{eq:positive-coupling-prob} implies that $r_1+r_2+\dots+r_{T_v}$ has power-law tails. Indeed, since the above two lemmas imply that $\Pr(T_v>t) \le Ce^{-ct}$ for some $C,c>0$ and all $t>0$, and since~\eqref{eq:ell_n} and~\eqref{eq:ell_n_exp_growth} imply that $r_1+r_2+\dots+r_n \le e^{c'n}$ for some $c'>0$ and all $n \ge 1$, we see that
\[ \Pr(r_1+r_2+\dots+r_{T_v} > t) \le \Pr\big(e^{c'T_v} > t\big) = \Pr\big(T_v > \tfrac{1}{c'} \log t\big) \le Ce^{-\tfrac{c}{c'} \log t} = C t^{-c/c'} . \qedhere \]
\end{proof}

\section{Strong spatial mixing implies \ffiid\ with exponential tails}
\label{sec:strong-mixing}

In this section, we prove part~\eqref{thm:main-b} of Theorem~\ref{thm:main}.
Let us begin with a simple observation.
For a finite set $V \subset \Z^d$ and a grand coupling $\pi$ of $(P^\tau_V)_{\tau \in \Omega_{\partial V}}$, define
\[ \kappa_\pi := \frac{1}{|V|} \sum_{v \in V} \sum_{s \in S} \pi\big(\omega^\tau_v = s\text{ for all }\tau \in \Omega_{\partial V} \big) .\]
One may regard $\kappa_\pi$ as the average ratio of vertices in $V$ that are successfully coupled (i.e., whose value does not depend on the boundary condition $\tau$) by $\pi$.
As such, recalling~\eqref{eq:optimal-coupling}, it is easy to see that every optimal grand coupling $\pi$ satisfies
\[ \kappa_\pi \ge \gamma(V,U) \cdot \frac{|U|}{|V|} \qquad\text{for any }U \subset V.\]
We aim to prove a statement of the form ``if there exists a grand coupling $\pi$ for which the average ratio of vertices in $V$ that are successfully coupled by $\pi$ is sufficiently close to 1, then $X$ is \ffiid\ with exponential tails''. In particular, we will show that if there exists a grand coupling $\pi$ for which
\begin{equation}\label{eq:kappa-condition}
\kappa_\pi > 1 - \frac{1}{|\partial V|} ,
\end{equation}
then $X$ is \ffiid\ with exponential tails (showing that the high noise condition~\eqref{eq:high-noise} can be somewhat relaxed).
Unfortunately, this result is not sufficient in order to deduce part~\eqref{thm:main-b} of Theorem~\ref{thm:main} as, under the assumption of exponential strong spatial mixing, we cannot guarantee that~\eqref{eq:kappa-condition} holds for some grand coupling. We thus require a more refined version of this, which we now formulate.

For a boundary condition $\tau \in \Omega_{\partial V}$ and a subset $A \subset \partial V$, define
\[ \lambda_\pi(\tau,A) := \frac{1}{|V|} \sum_{v \in V} \pi\Big(\omega^\tau_v \neq \omega^{\tau'}_v\text{ for some } \tau' \in \Omega_{\partial V}\text{ having }\tau_{\partial V \setminus A}=\tau'_{\partial V\setminus A} \Big) .\]
Note that $\lambda_\pi(\tau,A) \le \lambda_\pi(\tau,A')$ for any $\tau \in \Omega_{\partial V}$ and $A \subset A' \subset \partial V$, and that
\[ \lambda_\pi(\tau,\partial V) = 1 - \kappa_\pi \qquad\text{for any }\tau \in \Omega_{\partial V} .\]
One may regard the role of $A$ in $\lambda_\pi(\tau,A)$ as indicating which vertices on the boundary $\partial V$ are still uncoupled (i.e., whose values depend on the initial configuration in the coupled dynamics and are thus unknown), so that $\lambda_\pi(\tau,A)$ measures the average ratio of vertices in $V$ that are not successfully coupled by~$\pi$ (and hence, whose values are unknown after applying one step of the coupled dynamics), given that we know that the values of the vertices in $\partial V \setminus A$ are given by $\tau$.
Thus, $\lambda_\pi(\tau,A)$ allows us to relax~\eqref{eq:kappa-condition}, or equivalently, $\lambda_\pi(\tau,A) < 1/|\partial V|$, to a condition which takes into account the known/available information. When little or no information is available (i.e., when $A$ is large or even equal to $\partial V$), we shall require a much more modest bound on $\lambda_\pi(\tau,A)$. As more information becomes available (i.e., as $A$ becomes smaller), we shall require a better bound, which will match the original one precisely when $|A|=1$.

\begin{thm}\label{thm:contraction-implies-ffiid}
	Let $X$ be a translation-invariant Markov random field with specification $P$.
	If for some finite set $V\subset \Z^d$ and some grand coupling $\pi$ of $(P^\tau_V)_{\tau \in \Omega_{\partial V}}$,
	\begin{equation}\label{eq:good-coupling-prob2}
	\lambda_\pi(\tau,A) < \frac{|A|}{|\partial V|} \qquad\text{for all }\tau \in \Omega_{\partial V}\text{ and non-empty }A \subset \partial V,
	\end{equation}
	then $X$ is \ffiid\ with exponential tails.
\end{thm}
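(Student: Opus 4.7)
The plan is to apply the coupling-from-the-past framework of Section~\ref{sec:technique} with the time-homogeneous choice $\Delta_n := V$, $p_n := p$ for some fixed $p \in (0,1)$, and $\pi_n := \pi$ the given grand coupling. Then $r_n = \diam(V)+1$ is constant, so by Proposition~\ref{prop:coding-radius} it suffices to show that the coalescence time $T_\zero$ from~\eqref{eq:coalescence-time} has exponential tails.

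To control $T_\zero$, I introduce a bounding chain $H_n \colon \Z^d \to 2^S$ tracking, at each site $v$, the set of values that $\omega^n(\xi)_v$ can assume as $\xi$ ranges over $\Omega$. Initialize $H_0(v) := \Omega_v$; when a block $u+V$ is chosen at time $n$, update
\[ H_n(v) := \bigl\{ B^\tau_{u,n}(v-u) : \tau \in \Omega_{\partial V},~ \tau_x \in H_{n-1}(u+x)\text{ for all }x \in \partial V \bigr\} \quad\text{for } v \in u+V, \]
while $H_n(v) := H_{n-1}(v)$ otherwise. Let $U_n := \{v : |H_n(v)| > 1\}$. A simple induction gives $H_n(v) \supseteq \{\omega^n(\xi)_v : \xi \in \Omega\}$, so whenever $v \notin U_n$ the map $\omega^n(\cdot)_v$ is constant. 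Hence $T_\zero \le \min\{n : \zero \notin U_n\}$, and it suffices to show that $\alpha_n := \Pr(\zero \in U_n)$ decays exponentially.

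The central step is the recursion $\alpha_n \le (1 - c|V|q) \alpha_{n-1}$, where $q$ is the per-site probability of being chosen and $c > 0$ is the uniform gap $\lambda_\pi(\tau,A) \le (1-c) |A|/|\partial V|$ that~\eqref{eq:good-coupling-prob2} supplies by finiteness of the set of $(\tau,A)$. Condition on whether $\zero$ belongs to some chosen block at time $n$: almost surely at most one $u \in -V$ is chosen (by the spacing rule), so the event ``no chosen block containing $\zero$'' has probability $1 - |V|q$ and contributes $\alpha_{n-1}(1-|V|q)$ to $\alpha_n$. On the event that $u \in -V$ is chosen, $\zero \in U_n$ iff $\zero$ lies in the post-update uncoupled set $N_u \subseteq u+V$. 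With $\cA := \{x \in \partial V : u+x \in U_{n-1}\}$ for the current uncoupled boundary and $\tau_0 \in \Omega_{\partial V}$ any feasible extension of the coupled boundary off $\cA$, a pigeonhole observation shows that $N_u$ is contained in the set of $v \in u+V$ for which there exists $\tau' \in \Omega_{\partial V}$ with $\tau'_{\partial V \setminus \cA} = (\tau_0)_{\partial V \setminus \cA}$ and $B^{\tau_0}_{u,n}(v-u) \ne B^{\tau'}_{u,n}(v-u)$. By the very definition of $\lambda_\pi$, $\E[|N_u| \mid U_{n-1},\, u\text{ chosen}] \le |V| \lambda_\pi(\tau_0,\cA) \le (1-c) |V||\cA|/|\partial V|$. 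Summing $\Pr(\zero \in N_u, u\text{ chosen})$ over $u \in -V$, using independence of the time-$n$ ``chosen'' indicator from $U_{n-1}$ and $B_{u,n}$, and substituting $v := \zero - u$, one obtains $q \sum_{v \in V} \Pr(v \in N_\zero) = q \E[|N_\zero|] \le q(1-c)|V|\alpha_{n-1}$, the last inequality using $\E[|\cA|] = |\partial V|\alpha_{n-1}$ by translation invariance. Summing both cases gives $\alpha_n \le (1 - c|V|q)\alpha_{n-1}$, hence $\alpha_n \le (1-c|V|q)^n$, and Proposition~\ref{prop:coding-radius} yields exponential tails for $R$.

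The main obstacle is reconciling the ``averaged over $v \in V$'' contraction hypothesis~\eqref{eq:good-coupling-prob2} with the ``single-site'' quantity $\alpha_n$. The resolution is translation invariance: summing the event ``$\zero \in N_u$'' over block centers $u \in -V$ reassembles precisely the sum $\sum_{v \in V} \Pr(v \in N_\zero) = \E[|N_\zero|]$ that $\lambda_\pi$ directly bounds. A secondary issue is that the candidate boundary set $\prod_x H_{n-1}(u+x)$ need not lie inside $\Omega_{\partial V}$; this is handled by keeping only the genuinely feasible $\tau$, which still accommodates every realization of the forward dynamics.
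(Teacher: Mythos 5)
Your argument is essentially the paper's: run coupling-from-the-past with a fixed block $\Delta_n := V$ and fixed coupling $\pi_n := \pi$, and show that the per-site probability of being uncoupled contracts by a constant factor each step, the contraction coming from \eqref{eq:good-coupling-prob2} applied after summing over the block centers containing the origin and invoking translation invariance (the paper phrases this as $\E[\Psi_{n+1}] \le (1-\epsilon\beta)\E[\Psi_n]$ when proving the slightly more general Theorem~\ref{thm:finitary-coding-kappa}, which tracks the exact attainable set $Z_n(v)$ with a general potential $\psi$ rather than your bounding chain $H_n$ with $\psi_1$, but the mechanism is identical). One small slip to tidy: your bounding chain $H_n$ tracks the forward composition $f_n \circ \cdots \circ f_1$, while $T_\zero$ is defined through $\omega^n = f_1 \circ \cdots \circ f_n$, so the pathwise inequality $T_\zero \le \min\{n : \zero \notin U_n\}$ does not hold as written; you should instead use $\Pr(T_\zero > n) \le \Pr(\zero \in U_n)$, which follows because the two compositions are equal in distribution --- exactly the observation the paper makes explicitly before running the contraction.
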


One may regard a coupling satisfying~\eqref{eq:good-coupling-prob2} as a contraction, since it reduces the density of vertices whose values are unknown. Indeed, \eqref{eq:good-coupling-prob2} says that if we know the values of the vertices in $\partial V \setminus A$, then, on average, after applying one step of the coupled dynamics, the ratio of vertices in $V$ whose values are unknown is strictly less than it was on the boundary.

The following theorem implies that, under the assumption of exponential strong spatial mixing, one may always find a grand coupling satisfying~\eqref{eq:good-coupling-prob2}, at least when $V$ is a sufficiently large box.

\begin{thm}\label{thm:grand-coupling-exists}
	Let $P$ be the specification of a Markov random field satisfying exponential strong spatial mixing.
	Then there exists a grand coupling $\pi_n$ of $(P^\tau_{\Lambda_n})_{\tau \in \Omega_{\partial \Lambda_n}}$ such that
	\[ \lambda_{\pi_n}(\tau,A) \le \frac{C\log^d n}{n^d} \cdot |A| \qquad\text{for all }\tau \in \Omega_{\partial \Lambda_n}\text{ and }A \subset \partial \Lambda_n ,\]
	where $C>0$ depends on $P$, but does not depend on $n$.
\end{thm}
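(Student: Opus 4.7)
The plan is to build $\pi_n$ with a decomposable \emph{influence structure}: for each $v \in \Lambda_n$, a random subset $\mathcal{I}_v \subseteq \partial \Lambda_n$ (measurable with respect to the auxiliary randomness defining $\pi_n$) such that the value $\omega^\tau_v$ is determined by $\tau_{\mathcal{I}_v}$ together with that randomness. Granted this, whenever $\tau, \tau' \in \Omega_{\partial \Lambda_n}$ agree off $A$, the event $\omega^\tau_v \neq \omega^{\tau'}_v$ forces $\tau$ and $\tau'$ to disagree somewhere on $\mathcal{I}_v \cap A$, so a union bound gives
\[
\lambda_{\pi_n}(\tau,A) \;\le\; \frac{1}{|\Lambda_n|}\sum_{v \in \Lambda_n}\sum_{u \in A}\pi_n(u \in \mathcal{I}_v) \;=\; \frac{1}{|\Lambda_n|}\sum_{u \in A}\mathbb{E}\bigl|\{v \in \Lambda_n : u \in \mathcal{I}_v\}\bigr|.
\]
The theorem thus reduces to constructing a grand coupling in which each boundary vertex influences, in expectation, at most $C\log^d n$ interior vertices.

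To construct such a $\pi_n$ I would use a multi-scale ``shielding'' procedure. Fix a length $\ell = \Theta(\log n)$ chosen so that the exponential SSM rate satisfies $|\Lambda_\ell|\,\rho(\ell) \le n^{-C}$ for a large constant $C$. Recursively peel off a collar of width $\ell$: at the outer step, reveal the configuration on the inner shield annulus $\Lambda_n \setminus \Lambda_{n-\ell}$ using a grand coupling of $(P^\tau_{\mathrm{shield}})_{\tau \in \Omega_{\partial \Lambda_n}}$ that is optimal for the marginal on the inner face of the shield. Exponential SSM implies that this marginal is nearly independent of $\tau$, so the optimal grand coupling coalesces the inner face except on a sparse uncoalesced ``defect''; once the face is coalesced, the Markov property decouples $\Lambda_{n-\ell}$ from $\partial \Lambda_n$ and the inside can be sampled from the common conditional law and recursed upon. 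Define $\mathcal{I}_v$ inductively as the collection of boundary vertices that can be joined to $v$ through the defects of the cascade of shields. Each of the $O(n/\ell)$ shielding steps produces a defect of expected volume $O(\ell^d)$, but any given boundary vertex $u$ enters $\mathcal{I}_v$ only for those $v$ lying in the defect components reachable from $u$, and these defects shrink at a geometric rate dictated by the SSM decay. Summing across scales yields $\mathbb{E}|\{v : u \in \mathcal{I}_v\}| = O(\ell^d) = O(\log^d n)$, as required.

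The main obstacle is the one flagged in Remark~\ref{rem:optimal-couplings}: strong spatial mixing is fundamentally a pairwise statement, and there is in general no grand coupling that realises every pairwise SSM bound simultaneously. The technical heart of the proof is therefore to upgrade pairwise SSM into a bound on the optimal grand-coupling quantity $\gamma$ from~\eqref{eq:def-gamma}, applied to the entire family of shield marginals indexed by $\tau \in \Omega_{\partial \Lambda_n}$. Concretely, one needs to control $\sum_\sigma \min_{\tau} P^\tau_{\mathrm{shield}}(\sigma)$ uniformly, not merely pairwise total variations; this is precisely why the shield thickness must be taken as $\Theta(\log n)$ rather than $O(1)$, so that the exponential SSM decay absorbs the union-bound cost of the exponentially many configurations on the inner face of the shield. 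Carrying this uniform-in-$\tau$ control through all recursion levels, while preserving each marginal $P^\tau_{\Lambda_n}$ exactly, is where the construction becomes delicate and where most of the work of the proof should lie.
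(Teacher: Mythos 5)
Your high-level reduction (bound $\lambda_{\pi_n}(\tau,A)$ by constructing an influence structure $\mathcal{I}_v$ so that each boundary vertex has expected influence $O(\log^d n)$) is a reasonable reformulation, and you correctly flag that the crux is upgrading pairwise SSM into control on the grand-coupling quantity $\gamma$. But the mechanism you propose for that upgrade does not work, and the multi-scale peeling architecture you layer on top introduces further unresolved problems.

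The union-bound idea is the main gap. You suggest taking $\ell = \Theta(\log n)$ so that ``exponential SSM decay absorbs the union-bound cost of the exponentially many configurations on the inner face of the shield.'' But the inner face of a width-$\ell$ shield peeled off $\Lambda_n$ has $\Theta(n^{d-1})$ sites, hence $|S|^{\Theta(n^{d-1})}$ configurations, and a union bound over those would force $\ell = \Omega(n^{d-1})$, not $\Theta(\log n)$. The paper avoids a union bound over configurations entirely: the correct upgrade is the passage from strong spatial mixing to \emph{ratio} strong spatial mixing (Proposition~\ref{prop:SSM-implies-ratio-SSM}), which is a two-step conditional coupling argument, not a union bound. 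Ratio SSM gives the pointwise estimate $P^{\tau'}_{V,U}(\omega) \ge (1-|U|\rho_*(r))\,P^{\tau}_{V,U}(\omega)$, and then $\gamma(V,U) = \sum_\omega \min_\tau P^\tau_{V,U}(\omega) \ge (1-|U|\rho_*(r)) \sum_\omega P^{\tau_0}_{V,U}(\omega) = 1 - |U|\rho_*(r)$ with no configuration count appearing at all. Without some substitute for this step, your construction has no control on the coalescence probability of the shield's grand coupling.

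Two further problems with the peeling recursion as stated. First, the optimal grand coupling of the shield marginal does not produce a ``sparse uncoalesced defect''; it guarantees coalescence of the \emph{whole} inner face with probability $\ge \gamma$, and when coalescence fails the defect can a priori be the entire face, so ``sparseness'' is asserted rather than constructed. The paper achieves localization by explicitly partitioning the annulus into small pieces $L_i$ (faces of a grid of cubes of side $\Theta(\log n)$), coupling them one at a time in a carefully chosen order, so that a coalescence failure on one $L_i$ can only propagate to a bounded set of nearby $L_{i'}$. Second, you assert that defects ``shrink at a geometric rate'' across recursion levels, but offer no mechanism; each subsequent shell has the same thickness $\ell$ and the same SSM bound, so without an argument, the defect density should stay roughly constant rather than contract. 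In fact the paper's proof does not recurse into the interior at all: the entire bulk $U = \Lambda_{n-r}$ is coupled in one optimal grand coupling (where ratio SSM gives $\gamma(\Lambda_n,\Lambda_{n-r}) \ge 1 - |\Lambda_{n-r}|\rho_*(r)$), and only the single annulus $\Lambda_n \setminus \Lambda_{n-r}$ is decomposed into the ordered faces $L_1,\dots,L_m$. So the recursion you propose is both unnecessary and, as sketched, unjustified.
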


Part~\eqref{thm:main-b} of Theorem~\ref{thm:main} is an immediate consequence of the two theorems above.
The proof of Theorem~\ref{thm:contraction-implies-ffiid} is given in Section~\ref{sec:contraction-implies-ffiid}.
In Section~\ref{sec:ratio-SSM}, we prove that exponential strong spatial mixing implies exponential ratio strong spatial mixing. We then use this in Section~\ref{sec:grand-coupling} to prove Theorem~\ref{thm:grand-coupling-exists}.

\subsection{Contraction implies \ffiid\ with exponential tails}
\label{sec:contraction-implies-ffiid}

It is instructive to consider a more general setting than that of Theorem~\ref{thm:contraction-implies-ffiid}.
Let $\psi \colon 2^S \setminus \{\emptyset\} \to [0,\infty)$ be any function satisfying
\begin{equation}\label{eq:psi-monotone}
\psi(A) \le \psi(B) \qquad\text{ for any non-empty }A \subset B \subset S
\end{equation}
and
\begin{equation}\label{eq:psi-zero}
\psi(A)=0 \iff |A|=1 \qquad\text{ for any non-empty }A \subset S .
\end{equation}
We think of $\psi$ as a measure of how much uncertainty there is about the value at a vertex. The simplest such $\psi$ (and the one which is relevant for Theorem~\ref{thm:contraction-implies-ffiid} and Theorem~\ref{thm:grand-coupling-exists}) is $\psi_1$ defined by
\[ \psi_1(A) := \1_{\{|A|>1\}} .\]

For finite $V \subset \Z^d$ and $\eta \in (2^S)^{\partial V}$, denote
\[ \Omega_{\partial V,\eta} := \big\{ \tau \in \Omega_{\partial V} : \tau_v \in \eta_v\text{ for all }v \in \partial V \big\} .\]
For a grand coupling $\pi$ of $(P^\tau_V)_{\tau \in \Omega_{\partial V}}$ and $\eta \in (2^S)^{\partial V}$ such that $\Omega_{\partial V,\eta}\neq \emptyset$, define
\begin{equation}\label{eq:lambda-def}
 \lambda_{\pi,\psi}(\eta) := \frac{1}{|V|} \sum_{v \in V} \pi\Big[\psi\big(\big\{\omega^\tau_v : \tau \in \Omega_{\partial V,\eta} \big\}\big)\Big] .
\end{equation}
Note that $\lambda_{\pi,\psi_1}(\eta) = \lambda_\pi(\tau,A)$, where $A := \{ v \in \partial V : |\eta_v|>1 \}$ and $\tau \in \Omega_{\partial V}$ is any element satisfying $\eta_v=\{\tau_v\}$ for all $v \in \partial V \setminus A$.
Thus, the following is a generalization of Theorem~\ref{thm:contraction-implies-ffiid}.

\begin{thm}\label{thm:finitary-coding-kappa}
	Let $X$ be a translation-invariant Markov random field with specification $P$.
	If for some finite set $V\subset \Z^d$, some grand coupling $\pi$ of $(P^\tau_V)_{\tau \in \Omega_{\partial V}}$ and some $\epsilon>0$,
			\begin{equation}\label{eq:good-coupling-prob3}
			\lambda_{\pi,\psi}(\eta) \le \frac{1-\epsilon}{|\partial V|} \sum_{v \in \partial V} \psi(\eta_v) \qquad\text{for all }\eta \in (2^S)^{\partial V}\text{ such that }\Omega_{\partial V,\eta} \neq \emptyset,
			\end{equation}
			then $X$ is \ffiid\ with exponential tails.
\end{thm}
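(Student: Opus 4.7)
The plan is to instantiate the coupling-from-the-past machinery of Section~\ref{sec:cftp} with the \emph{constant} parameters $\Delta_n \equiv V$, $p_n \equiv \tfrac12$, $\pi_n \equiv \pi$ for all $n \ge 1$. Then $r_n = \diam(V) + 1$ is constant, so by Proposition~\ref{prop:coding-radius} the coding radius satisfies $R \le 2(\diam(V)+1)\cdot T_\zero$, and it suffices to prove that $T_\zero$ has exponential tails. With constant parameters, $(f_n)_{n \ge 1}$ is an \iid\ sequence of random functions, so $\omega^n = f_1 \circ \cdots \circ f_n$ and the forward composition $\phi^n := f_n \circ \cdots \circ f_1$ are equal in distribution; consequently $T_\zero \eqd \tilde T_\zero := \min\{n : \phi^n(\cdot)_\zero \text{ is constant}\}$.

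To control $\tilde T_\zero$, I would introduce a $(2^S)^{\Z^d}$-valued \emph{enveloping process} $(\eta_n)_{n \ge 0}$. Set $\eta_0(v) := S$, and inductively let $\eta_n(v) := \eta_{n-1}(v)$ when $U_{v,n} = \emptyset$, while when $U_{v,n} = \{u\}$,
\[ \eta_n(v) := \{B^\tau_{u,n}(v-u) : \tau \in \Omega_{\partial V,\eta^u_{n-1}}\}, \qquad \eta^u_{n-1}(w) := \eta_{n-1}(u+w) . \]
A direct induction using~\eqref{eq:dynamics} shows $\phi^n(\xi)_v \in \eta_n(v)$ for every $\xi \in \Omega$, so $\tilde T_\zero \le \min\{n : |\eta_n(\zero)|=1\}$. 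Crucially, $(\eta_n)$ is translation-invariant in distribution, and $\eta_{n-1}$ is independent of the step variables $(A_{\cdot,n},B_{\cdot,n})$, so the hypothesis~\eqref{eq:good-coupling-prob3} can be applied conditionally on $\eta_{n-1}$.

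The heart of the proof is a geometric contraction of $\Psi_n := \E[\psi(\eta_n(\zero))]$. Because chosen vertices at time $n$ are pairwise separated by more than $\diam(V)$, the blocks $u+V$ over chosen $u$'s are pairwise disjoint, so $\Pr(U_{\zero,n}=\emptyset) = 1-\rho|V|$ and $\Pr(U_{\zero,n}=\{u\}) = \rho$ for each $u \in -V$, where $\rho := \Pr(\zero$ is chosen$)>0$. Summing over $u \in -V$ and substituting $v' := \zero - u \in V$, translation invariance of $\eta_{n-1}$ converts
\[ \sum_{u \in -V} \E[\psi(\eta_n(\zero)) \mid U_{\zero,n}=\{u\}] = |V| \cdot \E[\lambda_{\pi,\psi}(\eta_{n-1}|_{\partial V})], \]
and applying~\eqref{eq:good-coupling-prob3} inside the expectation, together with $\E[\psi(\eta_{n-1}(w))]=\Psi_{n-1}$, gives $\E[\lambda_{\pi,\psi}(\eta_{n-1}|_{\partial V})] \le (1-\epsilon)\Psi_{n-1}$. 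Combining,
\[ \Psi_n \le (1-\rho|V|)\Psi_{n-1} + \rho|V|(1-\epsilon)\Psi_{n-1} = (1-\rho|V|\epsilon)\,\Psi_{n-1}, \]
so $\Psi_n \le \Psi_0 \alpha^n$ with $\alpha := 1-\rho|V|\epsilon \in (0,1)$. Since $\psi_{\min} := \min\{\psi(A): A \subset S,\, |A|>1\}>0$ by~\eqref{eq:psi-zero}, Markov's inequality yields $\Pr(|\eta_n(\zero)|>1) \le \Psi_n/\psi_{\min} \le C\alpha^n$, and hence $\Pr(T_\zero > n) \le C\alpha^n$, as required.

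The step I expect to require the most care is the ``reindexing'' identity in the third paragraph, which trades the sum over chosen $u \in -V$ for the sum over $w \in \partial V$ implicit in~\eqref{eq:good-coupling-prob3}. Three ingredients must be aligned: translation invariance of the joint law of $\eta_{n-1}$ and the chosen point process, independence of $U_{\zero,n}$ from $\eta_{n-1}$, and the fact that~\eqref{eq:good-coupling-prob3} is assumed for \emph{every} feasible $\eta$, so may be applied to the random boundary $\eta_{n-1}|_{\partial V}$ inside an outer expectation.
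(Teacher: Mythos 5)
Your proof is correct and follows the paper's approach: same constant parameters $\Delta_n \equiv V$, $p_n \equiv 1/2$, $\pi_n \equiv \pi$, same reduction via Proposition~\ref{prop:coding-radius} and the distributional identity $\omega^n \eqd f_n \circ \cdots \circ f_1$, and the same exponential decay of $\E[\psi(\cdot)]$ of a set-valued process at the origin via hypothesis~\eqref{eq:good-coupling-prob3}. The only difference is cosmetic: the paper tracks $Z_n(v) := \{(f_n \circ \cdots \circ f_1)(\xi)_v : \xi \in \Omega\}$ and invokes monotonicity~\eqref{eq:psi-monotone} once to replace the boundary conditions actually realized by some $\xi$ with all of $\Omega_{\partial V,(Z_n(\cdot))}$, whereas your explicit envelope $\eta_n \supset Z_n$ satisfies that recursion by construction (which incidentally makes~\eqref{eq:psi-monotone} unnecessary), so the two are the same argument written in slightly different coordinates.
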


\begin{proof}
	Let $V$, $\pi$ and $\epsilon$ be as in the statement of the theorem.
	Set $\Delta_n := V$, $p_n = 1/2$ and $\pi_n := \pi$ for all $n \ge 1$.
In light of Proposition~\ref{prop:coding-radius}, the theorem will follow once we show that $T_v$ has exponential tails.
Observe that $\omega^n=f_1 \circ\dots\circ f_n$ has the same distribution as $f_n \circ\dots\circ f_1$. Thus,
\[ \Pr(T_v > n) = \Pr(\omega^n(\cdot)_v\text{ is not constant}) = \Pr((f_n \circ\dots\circ f_1)(\cdot)_v\text{ is not constant}) .\]
Denote
\[ Z_n(v) := \big\{ (f_n \circ\dots\circ f_1)(\xi)_v : \xi \in \Omega \big\}\qquad\text{and}\qquad \Psi_n := \psi(Z_n(\zero)) .\]
By~\eqref{eq:psi-zero}, we have
\[ \Pr(T_\zero > n) = \Pr(|Z_n(\zero)| \neq 1) = \Pr(\Psi_n>0) .\]
Since $\psi$ takes finitely many values, it remains to show that $\E[\Psi_n]$ decays exponentially in $n$.

Let $\alpha := \Pr(A'_{v,n}=1)$ denote the probability that a vertex is chosen at any given time and let $\beta := \Pr(U_{v,n} \neq \emptyset) = \alpha |V|$ denote the probability that it is updated at any given time.
Let $\cF_n$ denote the $\sigma$-algebra generated by $\{ A_{v,i},B_{v,i} \}_{v \in \Z^d, 1 \le i \le n}$.
Then
\begin{align*}
\E[\Psi_{n+1} \mid \cF_n]
 &= \E\Big[ \E\big[\Psi_{n+1} \mid U_{\zero,n+1},~\cF_n\big] \mid \cF_n\Big] \\
 &= (1-\beta) \E\big[\Psi_{n+1} \mid U_{\zero,n+1}=\emptyset,~\cF_n\big] + \alpha \sum_{u \in V} \E\big[\Psi_{n+1} \mid U_{\zero,n+1}=\{-u\},~\cF_n\big] \\
 &= (1-\beta) \Psi_n + \alpha \sum_{u \in V} \pi\Big[\psi\big(\big\{\omega^\tau_u : \tau=((f_n \circ\dots\circ f_1)(\xi)_{w-u})_{w \in \partial V}\text{ for some }\xi \in \Omega \big\}\big) \Big] .
\end{align*}
Thus, by the definition of $Z_n$ and by~\eqref{eq:psi-monotone},
\[ \E[\Psi_{n+1} \mid \cF_n]
 \le (1-\beta) \Psi_n + \alpha \sum_{u \in V} \pi\Big[\psi\big(\big\{\omega^\tau_u : \tau \in \Omega_{\partial V,(Z_n(w-u))_{w \in \partial V}} \big\}\big) \Big] .\]
Using that the distribution of $Z_n$ is translation-invariant, \eqref{eq:lambda-def} and~\eqref{eq:good-coupling-prob3}, we obtain
\begin{align*}
\E[\Psi_{n+1}]
= \E \big[ \E[\Psi_{n+1} \mid \cF_n] \big]
&\le (1-\beta) \E[\Psi_n] + \beta \E \Big[\lambda_{\pi,\psi}\big(Z_n|_{\partial V}\big)\Big] \\
&\le (1-\beta) \E[\Psi_n] + \frac{\beta (1-\epsilon)}{|\partial V|} \sum_{v \in \partial V} \E\big[\psi\big(Z_n(v)\big)\big] = (1 - \epsilon\beta ) \E[\Psi_n] .
\end{align*}
Thus, $\E[\Psi_n]$ decays exponentially in $n$.
\end{proof}

\subsection{Ratio strong spatial mixing}
\label{sec:ratio-SSM}

Here we prove that strong spatial mixing implies ratio strong spatial mixing.
The proof is essentially contained in~\cite[Theorem~3.3]{alexander1998weak}, where it is shown (in a slightly more general setting than Markov random fields) that exponential weak spatial mixing implies exponential ratio weak spatial mixing (see also~\cite[Theorem~3.10]{alexander2004mixing} and~\cite{martinelli1999lectures}). We present a short proof with some simplifications.

We say that a Markov random field and its specification satisfy \emph{ratio strong spatial mixing} with rate function $\rho$ if, for any finite sets $U \subset V \subset \Z^d$,
\begin{equation}\label{eq:ratio-SSM-def}
\max_{\omega \in \Omega_U} \left(1- \frac{P^{\tau'}_{V,U}(\omega)}{P^\tau_{V,U}(\omega)} \right) \le |U| \cdot \rho(\dist(U,\Sigma_{\tau,\tau'})) \qquad\text{for any }\tau,\tau' \in \Omega_{\partial V},
\end{equation}
where the maximum is only over those $\omega$ having $P^\tau_{V,U}(\omega)>0$, and $\Sigma_{\tau,\tau'}$ was defined just after~\eqref{eq:SSM-def}.
Let $s_r$ be the number of vertices $v \in \Z^d$ such that $|v|=r$, and note that $s_r = \Theta(r^{d-1})$ as $r \to \infty$.
Given a rate function~$\rho$, define the rate function $\rho_*$ by
\begin{equation}\label{eq:def-rho-star}
\rho_*(r) := \begin{cases} \infty &\text{if $r=1$}\\3s_{\lfloor r/2 \rfloor} \cdot \sqrt{\rho\big(\big\lfloor \frac r2 \big\rfloor\big)} &\text{if }r \ge 2 \end{cases} . .
\end{equation}
Formally, $\rho_*(1)$ is required to be finite, but this is clearly not important (it could instead be taken to equal, say, $\max\{1,\rho^*(2)\}$), and we write $\infty$ just to emphasize that~\eqref{eq:ratio-SSM-def} has no content when $\dist(U,\Sigma_{\tau,\tau'})=1$.
This is not a mere artifact of the proof, and cannot be avoided in general.
Indeed, there are models with hard constraints (e.g., proper $q$-colorings with large $q$), which satisfy strong spatial mixing with a rate function $\rho$ with arbitrarily small $\rho(1)$, and for which the left-hand side of~\eqref{eq:ratio-SSM-def} equals 1 when $\dist(U,\Sigma_{\tau,\tau'})=1$, due to the hard constraints.

\begin{prop}\label{prop:SSM-implies-ratio-SSM}
	If a Markov random field satisfies strong spatial mixing with rate function $\rho$, then it satisfies ratio strong spatial mixing with rate function $\rho_*$.
\end{prop}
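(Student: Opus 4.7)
The plan is to prove the proposition in two main steps: first, reduce to the single-site case $|U|=1$ by a telescoping product over the sites of $U$, and second, establish the single-site ratio bound using the Markov property with an intermediate region of radius $\lfloor r/2\rfloor$ around $u$, together with the strong spatial mixing hypothesis.

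For the reduction, enumerate $U=\{u_1,\dots,u_k\}$, write $U_i=\{u_1,\dots,u_i\}$, and apply the chain rule
\[
\frac{P^{\tau'}_{V,U}(\omega)}{P^{\tau}_{V,U}(\omega)} \;=\; \prod_{i=1}^{k}\frac{q_i^{\tau'}}{q_i^{\tau}}, \qquad q_i^{\sigma} \;:=\; \Pr\bigl(X_{u_i}=\omega_{u_i}\mid X_{\partial V}=\sigma,\,X_{U_{i-1}}=\omega_{U_{i-1}}\bigr).
\]
By the Markov property, $q_i^{\sigma}$ is a single-site marginal for the field on $V\setminus U_{i-1}$ with boundary condition obtained by gluing $\sigma$ on $\partial V$ with $\omega_{U_{i-1}}$ on $U_{i-1}$. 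The disagreement set between the two combined boundaries is still $\Sigma_{\tau,\tau'}\subset\partial V$, at distance at least $r:=\dist(U,\Sigma_{\tau,\tau'})$ from $u_i\in U$. Granting the single-site bound $q_i^{\tau'}/q_i^{\tau}\geq 1-\rho_*(r)$, Bernoulli's inequality gives $P^{\tau'}_{V,U}(\omega)/P^{\tau}_{V,U}(\omega)\geq (1-\rho_*(r))^{|U|}\geq 1-|U|\rho_*(r)$, as required.

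For the single-site case $U=\{u\}$, I set $r':=\lfloor r/2\rfloor$ and take the intermediate region $W:=\{v\in V:\dist(v,u)\leq r'\}$, whose boundary splits as $\partial W=(\partial W\cap V)\sqcup(\partial W\cap\partial V)$. Since $\dist(\partial W,\Sigma_{\tau,\tau'})\geq r-r'-1$, the configurations $\tau$ and $\tau'$ agree on $\partial W\cap\partial V$. The Markov property then gives the decomposition
\[
P^{\sigma}_{V,u}(s)\;=\;\sum_{\xi}\mu_{\sigma}(\xi)\,g(\xi), \qquad \mu_{\sigma}:=P^{\sigma}_{V,\partial W\cap V}, \qquad g(\xi):=P^{(\xi,\tau|_{\partial W\cap\partial V})}_{W,u}(s)\in[0,1],
\]
where $g$ is the same for $\sigma\in\{\tau,\tau'\}$ because it only depends on the common part of the outer boundary. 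Strong spatial mixing applied to the marginal on $\partial W\cap V\subset V$ then produces
\[
\varepsilon\;:=\;\|\mu_{\tau}-\mu_{\tau'}\|\;\leq\;|\partial W\cap V|\cdot\rho(r-r'-1)\;\leq\;3s_{r'}\rho(r'),
\]
using $|\partial W\cap V|\leq 3s_{r'}$ for the lattice ball of radius $r'$.

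The final and hardest step is to upgrade the total-variation bound $|P^{\tau}_{V,u}(s)-P^{\tau'}_{V,u}(s)|\leq\varepsilon$ to the ratio bound $1-P^{\tau'}_{V,u}(s)/P^{\tau}_{V,u}(s)\leq\rho_*(r)=3s_{r'}\sqrt{\rho(r')}$, and this is where the main difficulty lies. The key observation is a Cauchy--Schwarz estimate that exploits $0\leq g\leq 1$: since $\int g^{2}\,d\mu_\tau\leq\int g\,d\mu_\tau=P^{\tau}_{V,u}(s)$ and $\|(\mu_\tau-\mu_{\tau'})^{+}\|_{1}\leq\varepsilon/2$, one obtains
\[
P^{\tau}_{V,u}(s)-P^{\tau'}_{V,u}(s)\;\leq\;\int g\,d(\mu_\tau-\mu_{\tau'})^{+}\;\leq\;\sqrt{P^{\tau}_{V,u}(s)\cdot\varepsilon/2},
\]
so that $1-P^{\tau'}/P^{\tau}\leq\sqrt{\varepsilon/(2P^{\tau}_{V,u}(s))}$, and this is $\leq\rho_*(r)$ as soon as $P^{\tau}_{V,u}(s)\gtrsim 1/s_{r'}$. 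The remaining regime where $P^{\tau}_{V,u}(s)$ is even smaller is the main obstacle: there, assuming $\rho_*(r)<1$ (the conclusion being vacuous otherwise), one must separately show $P^{\tau'}_{V,u}(s)\geq(1-\rho_*(r))P^{\tau}_{V,u}(s)$, which uses that $P^{\tau}_{V,u}(s)>0$ forces $\mu_\tau(\{g>0\})\geq P^{\tau}_{V,u}(s)$ and then the total-variation gap together with the Markov structure to compare $\mu_{\tau'}$ and $\mu_\tau$ on $\{g>0\}$. The factor $3s_{r'}$ and the square root in $\rho_*$ are precisely what this balance demands, while the convention $\rho_*(1)=\infty$ absorbs the unavoidable failure at the smallest scale caused by hard constraints.
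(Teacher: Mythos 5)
Your overall strategy parallels the paper's: introduce an intermediate shell at distance $\lfloor r/2\rfloor$, observe that the boundary conditions agree on the outer part of the shell's boundary, apply strong spatial mixing to the shell to get a total-variation bound, and then upgrade the total-variation bound to a ratio bound. The chain-rule reduction to single sites is a legitimate alternative to the paper's approach (which couples directly on the whole of $U$), and the feasibility of the glued boundary conditions can be chased inductively once one assumes $\rho_*(r)<1$. That part is fine, though you should take $W$ to be the ball of radius strictly less than $\lfloor r/2\rfloor$ (rather than $\le\lfloor r/2\rfloor$), as otherwise $\partial W\cap\partial V$ can meet $\Sigma_{\tau,\tau'}$ when $r=2$, a case where $\rho_*(2)$ is finite and the statement has content.

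The genuine gap is in the last step, the ``hardest step'' you flag yourself. The Cauchy--Schwarz estimate
\[
P^{\tau}_{V,u}(s)-P^{\tau'}_{V,u}(s)\le\int g\,d(\mu_\tau-\mu_{\tau'})^{+}\le\sqrt{P^{\tau}_{V,u}(s)\cdot\varepsilon}
\]
(note that $\|(\mu_\tau-\mu_{\tau'})^{+}\|_1=\varepsilon$, not $\varepsilon/2$, since total variation is already half the $L^1$ distance) indeed gives the ratio bound when $P^{\tau}_{V,u}(s)\gtrsim 1/s_{r'}$. But the ``remaining regime'' you describe for small $P^{\tau}_{V,u}(s)$ is not something you can recover from a total-variation bound and $0\le g\le1$ alone: if $g=\mathbf 1_A$ with $\mu_\tau(A)=a\le\varepsilon$ and $\mu_{\tau'}(A)=0$, then $\|\mu_\tau-\mu_{\tau'}\|\le\varepsilon$ is consistent, yet $\int g\,d\mu_{\tau'}/\int g\,d\mu_\tau=0$. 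Saying ``use the total-variation gap together with the Markov structure to compare $\mu_{\tau'}$ and $\mu_\tau$ on $\{g>0\}$'' is a placeholder, not an argument, and it is precisely the place where a second, nontrivial use of strong spatial mixing is needed. The paper closes this by building a coupling $(\omega,\sigma)$ of $P^{\tau}_V$ and $P^{\tau'}_V$ in stages (shell $B$, then interior $W$, then outside), defining $G:=\Pr(\omega_B\ne\sigma_B\mid\omega_B)$, and splitting $\Pr(\omega_B\ne\sigma_B\mid\omega_U)$ into three terms controlled by Markov's inequality, by a \emph{second} application of strong spatial mixing that shows $\Pr(\omega_B\in\cdot\mid\omega_U)$ is close to $\Pr(\omega_B\in\cdot)$ (this is what gives a bound uniform in $\omega_U$, even when $\omega_U$ is an unlikely value), and by the conditional independence $\omega_U\perp\sigma_B\mid\omega_B$. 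Without that second application of spatial mixing, the estimate cannot be made uniform over small-probability $\omega_U$, so the proposal as written does not prove the proposition.
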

\begin{proof}
	Let $U \subset V \subset \Z^d$ be finite and let $\tau,\tau' \in \Omega_{\partial V}$. Denote $r := \dist(U,\Sigma_{\tau,\tau'})$ and $\epsilon := |U| \cdot \rho_*(r)$. Let $\omega$ and $\sigma$ denote samples from $P^\tau_V$ and $P^{\tau'}_V$, respectively.
We must show that
\[ \Pr(\sigma_U=\eta) \ge (1-\epsilon)\cdot \Pr(\omega_U=\eta) \qquad\text{ for all }\eta \in \Omega_U .\]
To this end, it suffices to show that one may couple $\omega$ and $\sigma$ so that
\begin{equation}\label{eq:H-prob}
\Pr\big(\omega_U=\sigma_U \mid \omega_U\big) \ge 1-\epsilon \qquad\text{almost surely} .
\end{equation}

Let us explain how to construct such a coupling. Since~\eqref{eq:H-prob} holds trivially when $\epsilon \ge 1$, we may assume that $\epsilon<1$. In particular, we have that $r_* :=\lfloor r/2 \rfloor \ge 1$. Denote
\[ B := \{ v \in V : \dist(v,U) = r_* \}\quad\text{and}\quad W := \{ v \in V : \dist(v,U) < r_* \} .\]
Observe that
\begin{equation}\label{eq:B-W-props}
 U \subset W, \qquad \partial W \subset B \cup (\partial V \setminus \Sigma_{\tau,\tau'}), \qquad \dist(B, U \cup \Sigma_{\tau,\tau'}) \ge r_* .
\end{equation}
We first sample $(\omega_B,\sigma_B)$ from an optimal coupling of $P^\tau_{V,B}$ and $P^{\tau'}_{V,B}$.
Next, we sample $(\omega_W,\sigma_W)$ from an optimal coupling of $P^\tau_V(\omega_W \in \cdot \mid \omega_B)$ and $P^{\tau'}_V(\sigma_W \in \cdot \mid \omega'_B)$.
Finally, we independently sample $\omega_{V \setminus (B \cup W)}$ from $P^\tau_V(\omega_{V \setminus (B \cup W)} \in \cdot \mid \omega_B)$ and $\sigma_{V \setminus (B \cup W)}$ from $P^{\tau'}_V(\sigma_{V \setminus (B \cup W)} \in \cdot \mid \sigma_B)$.
It is straightforward to verify that this indeed yields a sample $(\omega,\sigma)$ whose marginals are $P^\tau_V$ and $P^{\tau'}_V$.

\begin{figure}
	\centering
	\begin{subfigure}[t]{.4\textwidth}
		\centering
		\includegraphics[scale=0.65]{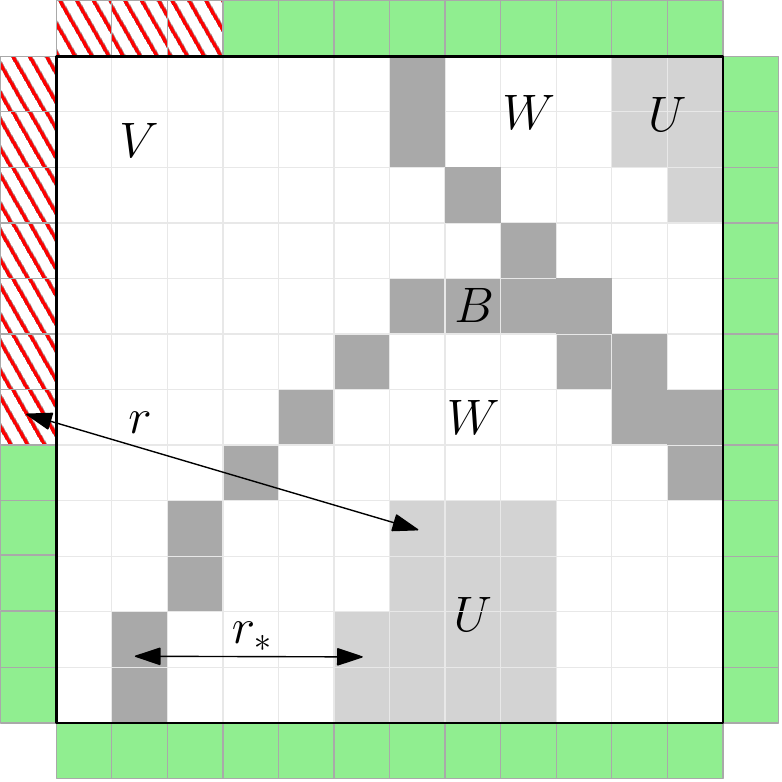}
		\caption{}
		\label{fig:ratio-mixing-a}
	\end{subfigure}%
	\begin{subfigure}{15pt}
		\quad
	\end{subfigure}%
	\begin{subfigure}[t]{.5\textwidth}
		\centering
		\includegraphics[scale=0.65]{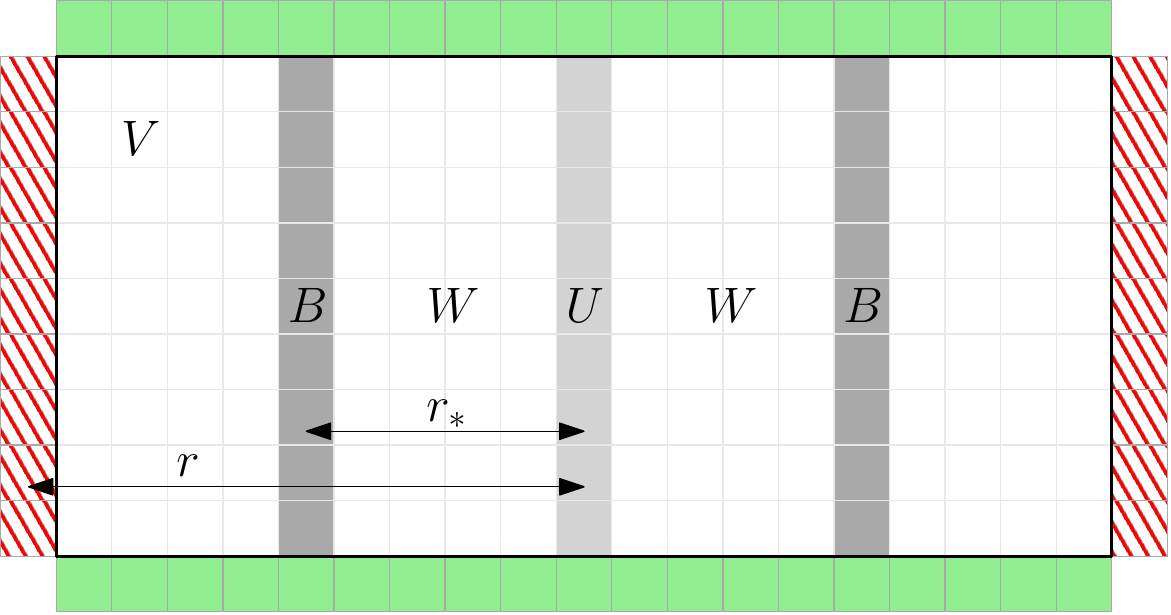}
		\caption{}
		\label{fig:ratio-mixing-box}		
	\end{subfigure}
	\caption{The set $\Sigma_{\tau,\tau'}$ of vertices in $\partial V$ where the two boundary conditions $\tau$ and~$\tau'$ disagree is depicted in dashed red, while the vertices of agreement are depicted in green. Strong spatial mixing allows one to couple samples from $P^\tau_V$ and $P^{\tau'}_V$ so that they agree on $B$ with high probability. Since $B$ is far from both $U$ and $\Sigma_{\tau,\tau'}$, using strong spatial mixing again, one may deduce that the densities of $P^\tau_{V,U}$ and $P^{\tau'}_{V,U}$ are similar as in~\eqref{eq:ratio-SSM-def}.}
	\label{fig:ratio-mixing}
\end{figure}

It remains to check that this coupling satisfies~\eqref{eq:H-prob}.
Note that, by construction and~\eqref{eq:B-W-props},
\[ \{\omega_B = \sigma_B\} \subset \{\omega_W = \sigma_W\} \subset \{\omega_U = \sigma_U\} .\]
This is the only use we make of the optimality of the coupling chosen in the second step of the construction.
Thus, it suffices to show that
\begin{equation}\label{eq:H-prob2}
\Pr\big(\omega_B=\sigma_B \mid \omega_U\big) \ge 1-\epsilon \qquad\text{almost surely} .
\end{equation}
Denote
\[ G := \Pr\big(\omega_B \neq \sigma_B \mid \omega_B\big) .\]
Denote $s := \sqrt{\rho(r_*)}$ and note that $s|B| \le \epsilon/3$, since $|B| \le s_{r_*}|U|$.
By Markov's inequality, the first step in the construction of the coupling, and strong spatial mixing,
\begin{equation}\label{eq:G-bound1}
\Pr(G \ge s) \le \frac{\E[G]}{s} = \frac{\Pr(\omega_B \neq \sigma_B)}{s} = \frac{\|P^\tau_{V,B}-P^{\tau'}_{V,B}\|}{s} \le \frac{|B| \cdot \rho(r_*)}{s} = s|B| \le \epsilon/3 .
\end{equation}
Using again strong spatial mixing, and since $s \le 1$, we similarly obtain, almost surely,
\begin{equation}\label{eq:G-bound2}
\big|\Pr(G \ge s \mid \omega_U) - \Pr(G \ge s)\big| \le \big\|\Pr(\omega_B \in \cdot \mid \omega_U) - \Pr(\omega_B \in \cdot)\big\| \le s^2|B| \le \epsilon/3 .
\end{equation}
Note that, by construction, $\omega_U$ and $\sigma_B$ are conditionally independent given $\omega_B$. Indeed, this follows from the fact that, given $(\omega_B,\sigma_B)$, the distribution of $\omega_W$ is almost surely $P^\tau_V(\omega_W \in \cdot \mid \omega_B)$. Thus, almost surely,
\[ \Pr\big(G \le s,~\omega_B \neq \sigma_B \mid \omega_U\big)
 \le \Pr\big(\omega_B \neq \sigma_B \mid G \le s,~\omega_U\big) = \E\big[G \mid G \le s,~\omega_U\big] \le s \le \epsilon/3 .\]
Putting this together with~\eqref{eq:G-bound1} and~\eqref{eq:G-bound2} yields~\eqref{eq:H-prob2}.
\end{proof}

\subsection{Existence of a contracting grand coupling}
\label{sec:grand-coupling}

Theorem~\ref{thm:grand-coupling-exists} follows by taking $r$ to be a large multiple of $\log n$ in the next proposition. Recall the definition of $\rho_*$ from~\eqref{eq:def-rho-star}.

\begin{prop}\label{prop:grand-coupling-exists}
	Let $P$ be a specification satisfying strong spatial mixing with rate function $\rho$.
	Then, for every $1 \le r \le n/4$, there exists a grand coupling $\pi_n$ of $(P^\tau_{\Lambda_n})_{\tau \in \Omega_{\partial \Lambda_n}}$ such that
	\begin{equation}\label{eq:grand-coupling}
	\lambda_{\pi_n}(\tau,A) \le \rho_*(r) + C_d r^d \cdot \frac{|A|}{|\Lambda_n|} \qquad\text{for all $\tau \in \Omega_{\partial \Lambda_n}$ and $A \subset \partial \Lambda_n$},
	\end{equation}
	where $C_d$ is a positive constant depending only on the dimension.
\end{prop}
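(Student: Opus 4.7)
The plan is to upgrade the hypothesis to ratio strong spatial mixing via Proposition~\ref{prop:SSM-implies-ratio-SSM} and then construct a grand coupling $\pi_n$ such that, after decomposing the sum defining $\lambda_{\pi_n}(\tau,A)$ according to the distance of $v$ to $A$, the ``far'' vertices contribute the $\rho_*(r)$ term and the ``near'' vertices contribute the $C_d r^d|A|/|\Lambda_n|$ term. Concretely, ratio SSM with rate~$\rho_*$ applied with $U=\{v\}$, $V=\Lambda_n$ and any $\tau'$ agreeing with $\tau$ off $A$ (so that $\Sigma_{\tau,\tau'}\subset A$) yields, for $\dist(v,A)\ge r$,
\[ P^{\tau'}_{\Lambda_n,v}(s)\;\ge\;(1-\rho_*(r))\,P^\tau_{\Lambda_n,v}(s) \quad \text{for every feasible } s. \]
Consequently, the family of vertex-marginals $(P^{\tau'}_{\Lambda_n,v})_{\tau'=\tau\text{ off }A}$ admits a grand coupling with total coincidence probability at least $1-\rho_*(r)$, with $(1-\rho_*(r))P^\tau_{\Lambda_n,v}$ serving as a common sub-probability component. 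The target is a single coupling $\pi_n$ of the joint distributions $(P^\tau_{\Lambda_n})_\tau$ whose induced marginal at every $v$ realizes this near-optimal coincidence simultaneously for every $(\tau,A)$ with $v$ far from $A$.

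Once such a $\pi_n$ has been built, the claimed bound is immediate from
\[ \lambda_{\pi_n}(\tau,A) \;=\; \frac{1}{|\Lambda_n|}\sum_{v\in\Lambda_n}\pi_n\bigl(E^{\tau,A}_v\bigr),\qquad E^{\tau,A}_v:=\bigl\{\omega^{\tau'}_v\ne\omega^\tau_v \text{ for some } \tau'\in\Omega_{\partial\Lambda_n},\; \tau'_{\partial\Lambda_n\setminus A}=\tau_{\partial\Lambda_n\setminus A}\bigr\}, \]
split according to $\dist(v,A)\lessgtr r$. The near set $\{v:\dist(v,A)<r\}\subset\bigcup_{u\in A}(u+\Lambda_r)$ has cardinality at most $|A|\cdot|\Lambda_r|\le C_d r^d|A|$, contributing the $C_d r^d|A|/|\Lambda_n|$ summand via the trivial bound $\pi_n(E^{\tau,A}_v)\le 1$; the remaining far vertices contribute at most $\rho_*(r)$ each, summing to $\rho_*(r)$.

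\textbf{Construction and main obstacle.} I would construct $\pi_n$ by a sequential / monotone scheme: enumerate the vertices of $\Lambda_n$ as $v_1,\dots,v_m$ (in an order to be chosen), draw iid $(U_k)_{k\le m}\sim\mathrm{Unif}[0,1]$, and for each $\tau$ set $\omega^\tau_{v_k}:=F^\tau_k(U_k)$, the inverse CDF (with a fixed linear order on $S$) of the conditional $P^\tau_{\Lambda_n}\bigl(\omega_{v_k}=\cdot\mid(\omega_{v_j}=\omega^\tau_{v_j})_{j<k}\bigr)$. This is manifestly a valid grand coupling of $(P^\tau_{\Lambda_n})_\tau$, and the inverse-CDF construction ensures that, when the conditional densities at step $k$ are within a factor $(1\pm\rho_*(r))$ of one another, the values $(\omega^{\tau'}_{v_k})_{\tau'}$ coincide with probability at least $1-|S|\rho_*(r)$ (the $|S|$-factor absorbable into $\rho_*$). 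The main obstacle—precisely the difficulty flagged in Remark~\ref{rem:optimal-couplings}—is to guarantee this ratio bound in the \emph{conditional} specification at $v_k$: the histories $(\omega^{\tau'}_{v_j})_{j<k}$ across the subfamily $\{\tau'=\tau\text{ off }A\}$ may diverge at sites $v_j$ close to $A$, and once they do, the effective disagreement set for the conditional problem at $v_k$ grows beyond $A$ and the ratio bound fails. The resolution is geometric: one chooses the enumeration—or, more plausibly, replaces the single-site update by a block update on blocks of diameter $\sim r$—so that for $v_k$ at distance $\ge r$ from $A$ the relevant history only involves sites whose Markov neighborhoods remain outside $A$; an induction on the sequential steps then propagates the $\rho_*(r)$ estimate to every such $v_k$, while the $C_d r^d$ factor arises from the geometric accounting of the $r$-thickening of $A$.
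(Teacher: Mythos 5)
Your high-level plan—use ratio strong spatial mixing (Proposition~\ref{prop:SSM-implies-ratio-SSM}) and split the sum over $v$ according to $\dist(v,A)\lessgtr r$, charging the near vertices the trivial bound and the far vertices $\rho_*(r)$—is the right shape, and the near-vertex accounting ($|A|\cdot|\Lambda_r|\le C_dr^d|A|$) is fine. But the far-vertex claim is exactly the hard part, and your proposal does not establish it. You need a \emph{single} grand coupling $\pi_n$ of $(P^\tau_{\Lambda_n})_\tau$ whose induced marginal at every $v$, and for every pair $(\tau,A)$ with $\dist(v,A)\ge r$, realizes near-optimal coincidence for the subfamily $\cT=\{\tau':\tau'_{\partial\Lambda_n\setminus A}=\tau_{\partial\Lambda_n\setminus A}\}$. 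This is precisely the phenomenon flagged in Remark~\ref{rem:optimal-couplings}: a grand coupling need not induce good couplings on subcollections, and ratio SSM applied to the one-vertex marginals only tells you that \emph{some} coupling of $(P^{\tau'}_{\Lambda_n,v})_{\tau'\in\cT}$ is good—it does not produce a joint $\pi_n$ doing this simultaneously for all $v$, $\tau$, $A$.

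Your proposed construction (sequential inverse-CDF updates, or ``blocks of diameter $\sim r$'') doesn't close this gap. The assertion that one can order the updates ``so that for $v_k$ at distance $\ge r$ from $A$ the relevant history only involves sites whose Markov neighborhoods remain outside $A$'' cannot hold for all $A\subset\partial\Lambda_n$ simultaneously with a single fixed enumeration: once any site near $A$ has been resolved and its values under different $\tau'\in\cT$ diverge, the effective disagreement set for later conditional distributions enlarges beyond $A$, and the ratio-SSM hypothesis for the conditional law at $v_k$ no longer applies. You name this obstacle but then wave it away with ``an induction \ldots then propagates the $\rho_*(r)$ estimate''—that induction is the entire content of the proposition and is not supplied. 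The paper's proof handles it by a two-stage, geometrically explicit construction: first couple all boundary conditions at once on the inner box $U=\Lambda_{n-r}$ (ratio \emph{weak} spatial mixing makes this succeed with probability $\ge 1-|U|\rho_*(r)$, \emph{regardless} of $A$—this anchors the whole argument); then tile the annulus $V\setminus U$ by cubes of side $\Theta(r)\times\Theta(s)^{d-1}$ and couple the $(d-1)$-dimensional faces $L_1,\dots,L_m$ one at a time, conditionally on a short boundary tube $T_i$, using optimal couplings of the resulting subfamilies and a carefully chosen \emph{order} on the $L_i$ so that the ``descendant'' relation $i'\to i$ has bounded depth, keeping the set $\cL^v\cup\cC^v$ of sites contaminated by a single boundary disagreement at size $O(r^d)$. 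None of this structure—the inner anchor on $U$, the $T_i$-conditional optimal couplings, the ordering that bounds propagation—appears in your proposal, and without it the claimed far-vertex bound is unjustified.
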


\begin{proof}
Fix $1 \le r < s \le 4r \le n$ and denote $V := \Lambda_n$ and $U := \Lambda_{n-r}$.
In order to ease notation, we make the convention that whenever $\tau$ is used as an index with an unspecified domain, it runs over all elements in $\Omega_{\partial V}$.

Let $L_1,L_2,\dots,L_m \subset V \setminus U$ be disjoint (to be suitably chosen later).
The main idea in the construction of $\pi_n$ is to couple the configurations, in order, first on $U$, then on $L_1$, then on $L_2$, and so on, and finally on each connected component of $V \setminus (U \cup L_1 \cup \dots \cup L_m)$. See Figure~\ref{fig:coupling}.

Set $U_0 := U$ and $V_0 := V$, and for $1 \le i \le m$, denote
\[ U_i := U \cup L_1 \cup \cdots \cup L_i \qquad\text{and}\qquad V_i := V \setminus U_i .\]
For $1 \le i \le m$, denote
\[ T_i := \big\{ v \in \partial V_{i-1} : \dist(v,L_i) < s \big\} \]
and, for each $\eta \in \Omega_{T_i}$, denote
\[ \Omega_{\partial V_{i-1},\eta} := \big\{ \tau \in \Omega_{\partial V_{i-1}} : \tau_{T_i}=\eta \big \} . \]
Note that, for every $1 \le i \le m$,
\[ \{ \Omega_{\partial V_{i-1},\eta} \}_{\eta \in \Omega_{T_i}}\text{ is a partition of }\Omega_{\partial V_{i-1}} .\]
Let $Q$ be an optimal coupling of $(P^\tau_{V,U})_\tau$. For each $1 \le i \le m$ and $\eta \in \Omega_{T_i}$, let $Q_i^\eta$ be an optimal coupling of $(P^\tau_{V_i,L_i})_{\tau \in \Omega_{\partial V_{i-1},\eta}}$.
Let $Q_i$ be any coupling of $(P^\tau_{V_i,L_i})_{\tau \in \Omega_{\partial V_{i-1}}}$ with marginals $(Q_i^\eta)_{\eta \in \Omega_{T_i}}$, e.g., the product coupling.
Note that, by ratio strong (or weak) spatial mixing (which follows from the assumption and Proposition~\ref{prop:SSM-implies-ratio-SSM}),
\begin{equation}\label{eq:Q-coupling}
Q\left(\omega^\tau_U=\omega^{\tau'}_U\text{ for all }\tau,\tau'\right) \ge 1 - |U| \cdot \rho_*(r) ,
\end{equation}
and that, by ratio strong spatial mixing, for any $1 \le i \le m$ and $\eta \in \Omega_{T_i}$,
\begin{equation}\label{eq:Q_i-coupling}
Q_i\left(\omega^\tau_{L_i}=\omega^{\tau'}_{L_i}\text{ for all }\tau,\tau' \in \Omega_{\partial V_{i-1},\eta} \right) \ge 1 - |L_i| \cdot \rho_*(s) .
\end{equation}

The grand coupling $\pi_n$ is constructed as follows.
First, we sample $(\omega^\tau_U)_\tau$ from $Q$.
Next, we sample $(\omega^\tau_{L_i})_\tau$, for each $i=1,\dots,m$, one after another, as follows.
Let $i \in \{1,\dots,m\}$ and suppose we have already sampled $(\omega^\tau_{U_i})_\tau$.
We then sample $(\omega^\tau_{L_i})_\tau$ by setting $\omega^\tau_{L_i} := \sigma^{\omega^\tau_{U_i}}$ for all $\tau$, where $(\sigma^\tau)_\tau$ is sampled from $Q_i$, independently of anything else previously sampled.
Thus, at the end of this process, we have sampled $(\omega^\tau_{U_m})_\tau$.
To complete the construction, for each connected component $\cC$ of $V_m$, we independently sample $(\omega^\tau_\cC)_\tau$ from an optimal coupling of $(P^\tau_{V,\cC}(\cdot \mid \omega^\tau_{V_m}))_\tau$.
Define $\pi_n$ to be the distribution of the sample $(\omega^\tau)_\tau$ thus obtained.
It is straightforward to check that $\pi_n$ is a grand coupling of $(P^\tau_V)_\tau$.

\begin{figure}
	\centering
	\begin{subfigure}[t]{.45\textwidth}
		\centering
		\includegraphics[scale=1]{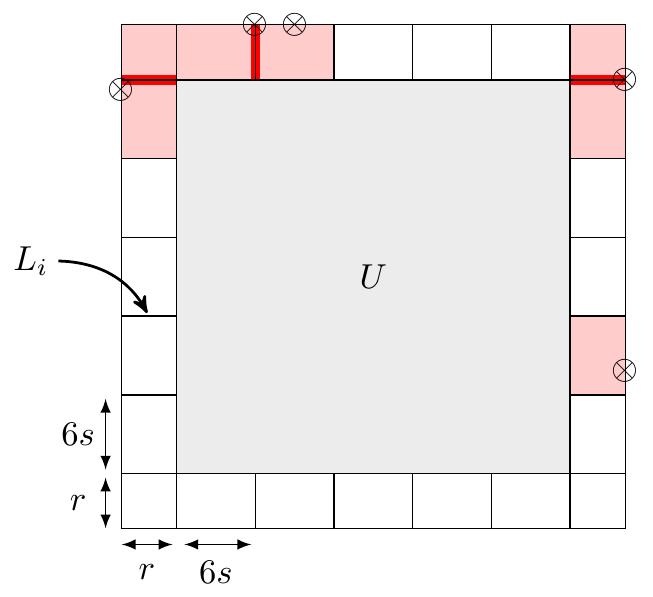}
		\caption{$d=2$.}
		\label{fig:coupling-a}
	\end{subfigure}%
	\begin{subfigure}{20pt}
		\quad
	\end{subfigure}%
	\begin{subfigure}[t]{.45\textwidth}
		\centering
		\includegraphics[scale=1]{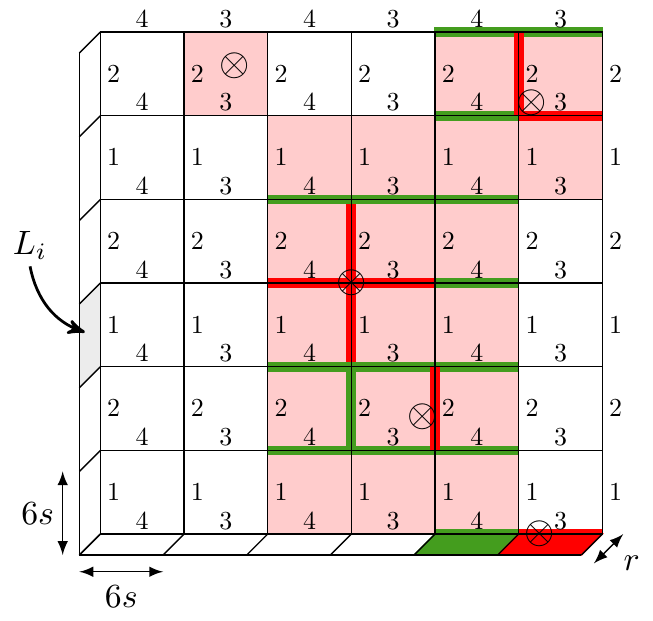}
		\caption{$d=3$; one of six slabs.}
		\label{fig:coupling-b}
	\end{subfigure}
	\caption{The set $V \setminus U$ is divided into small cubes having sides of length~$r$ or $s \le s' \le 10s$ (with $s'$ taken to be $6s$ in the figure).
		The faces of the cubes correspond to the $L_i$ (except for those faces which are contained in $U$ or $\partial V$) and the numbers adjacent to them indicate their relative order (the order is irrelevant when $d=2$).
		The elements of $A$ (the set of disagreement boundary vertices), along with circles of radius $s$ around them, are depicted by $\otimes$.
		We wish to couple samples $(\omega^\tau)_\tau$ from $(P^\tau_V)_\tau$, under all possible boundary conditions $\tau \in \Omega_{\partial V}$, in such a way that, for the set $\cT$ of boundary conditions which agree with a fixed configuration on $\partial V \setminus A$, with high probability, all samples in $(\omega^\tau)_{\tau \in \cT}$ coincide on the region of $V$ that is far away from $A$.
		Furthermore, we wish to do this in a manner which does not depend on $\cT$ or $A$.
		We do so by first coupling $(\omega^\tau)_\tau$ on $U$, then on $L_1$, then on $L_2$, and so forth, and finally on each of the remaining connected components.
		The $L_i$ for which $i \in I^A$ (i.e., for which $\dist(L_i,A) < s$) are directly affected by the disagreement on $A$ (these faces are depicted in red and are precisely those that have a circle $\otimes$ touching them), and we cannot hope to successfully couple all samples there.
		As this effect propagates, the $L_i$ for which $i \in \cI^A$ (i.e., for which $i' \to i$ for some $i' \in I^A$) but $i \notin I^A$ are also indirectly affected by the disagreement on $A$ (these faces are depicted in green).
		The connected components of $V_m$ that are contained in $\cC^A$ (i.e., components whose boundaries touch $A$ or such an affected $L_i$) are also directly or indirectly affected (these are depicted in pink). A well-chosen order of the $L_i$ ensures that the total effect of the disagreement on $A$ is not too large.}
	\label{fig:coupling}
\end{figure}

Let us observe some simple properties that follow from this construction.
First, by~\eqref{eq:Q-coupling},
\begin{equation}\label{eq:coupling-E-prob}
\pi_n(E) \ge 1 - |U| \cdot \rho_*(r), \qquad\text{where }E := \Big\{\omega^\tau_U=\omega^{\tau'}_U\text{ for all }\tau,\tau' \Big\} .
\end{equation}
For $\cT \subset \Omega_{\partial V}$, let $\Sigma_\cT$ be the subset of $\partial V$ on which some boundary conditions in $\cT$ do not agree, and let $E^\cT_i$ be the event that all boundary conditions in $\cT$ are coupled on $L_i$, i.e.,
\[ \Sigma_\cT := \Big\{ v \in \partial V : \tau_v \neq \tau'_v\text{ for some }\tau,\tau' \in \cT \Big\} ,\qquad E_i^\cT := \Big\{\omega^\tau_{L_i}=\omega^{\tau'}_{L_i}\text{ for all }\tau,\tau'\in\cT \Big\} .\]
By~\eqref{eq:Q_i-coupling}, for any $1 \le i \le m$ and $\cT \subset \Omega_{\partial V}$ such that $\dist(L_i,\Sigma_\cT) \ge s$,
\begin{equation}\label{eq:coupling-E_i-prob}
\pi_n\Big(E_i^\cT \mid (\omega^\tau_{U_i})_\tau\Big) \ge 1 - |L_i| \cdot \rho_*(s) \qquad\text{almost surely on the event }E \cap \bigcap_{\substack{1 \le i'<i\\\dist(L_i,L_{i'})<s}} E_{i'}^\cT .
\end{equation}
In addition, for any connected component $\cC$ of $V_m$,
\begin{equation}\label{eq:coupling-holes}
\omega^\tau_\cC = \omega^{\tau'}_\cC \qquad\text{almost surely for any }\tau,\tau'\text{ such that }\tau_{\partial \cC} = \tau'_{\partial \cC}.
\end{equation}
Equations~\eqref{eq:coupling-E-prob}, \eqref{eq:coupling-E_i-prob} and~\eqref{eq:coupling-holes} summarize the properties we need from the construction of $\pi_n$. 
We claim that, for a suitable choice of $L_1,\dots,L_m$, $\pi_n$ satisfies~\eqref{eq:grand-coupling}.
In fact, there are many possible such choices, and any choice with, say, property~\eqref{eq:small-effect} below will suffice.

For $v \in \partial V$, denote
\[ I^v := \big\{ 1 \le i \le m : \dist(v,L_i) < s \big\} .\]
Say that $i$ is a \emph{child} of $i'$ if $i>i'$ and $\dist(L_i,L_{i'}) < s$. Write $i' \to i$ if $i$ is a descendant (child, grandchild, etc.) of $i'$ or if $i=i'$.
Denote
 \[ \cI^v := \big\{ 1 \le i \le m : i' \to i\text{ for some }i' \in I^v \big\} .\]
We think of $I^v$ as indicating those $L_i$ which are directly affected by $v$, and $\cI^v \setminus I^v$ as those which are indirectly affected (as the effect of $v$ may propagate).
Denote
\[ \cL^v := \bigcup_{i \in \cI^v} L_i .\]
Let $\cC^v$ be the union of the connected components of $V_m$ which are adjacent to $\cL^v \cup \{v\}$. Then we shall show that the following suffices to ensure that $\pi_n$ satisfies~\eqref{eq:grand-coupling}:
\begin{equation}\label{eq:small-effect}
 |\cL^v \cup \cC^v| \le C_d r^d \qquad\text{for all }v \in \partial V .
\end{equation}
Intuitively, this property implies that the total negative effect caused by a disagreement at a single vertex $v \in \partial V$ cannot be too large, since~\eqref{eq:small-effect} provides a bound on the number of vertices affected (via the propagation of failed coupling events) by any such disagreement.

A choice of $L_1,\dots,L_m$ which satisfies this property can be obtained as follows (we give an informal description; see also Figure~\ref{fig:coupling}). In each of the $2d$ directions, there is a ``thick $(d-1)$-dimensional slab'' (of side-length $2(n-r)$ and of thickness~$r$) between the faces of $U$ and $V$ in that direction. We divide each such slab into thick $(d-1)$-dimensional cubes of side-length between $s$ and $10s$ (e.g., $[0,r] \times [0,s]^{d-1}$). We then continue to slabs of one less dimension: there are $\binom{2d}{2}$ ``thick $(d-2)$-dimensional slabs'' (of side-length $2(n-r)$ in $d-2$ of the dimensions and thickness $r$ in the two other dimensions), each of which is divided into thick $(d-2)$-dimensional cubes of side-length between $s$ and $10s$ (e.g., $[0,r]^2 \times [0,s]^{d-2}$). We continue this process to lower-dimensional slabs in a similar manner. At the end of the process, we obtain a partition of $V\setminus U$ into cubes of the form $[0,r]^k \times [0,s']^{d-k}$ with $1 \le k \le d$ and $s \le s' \le 10s$ (up to translations and coordinate permutations).
The $(d-1)$-dimensional faces of these cubes (where, for example, $[0,r]^k \times [0,s]^{d-k-1} \times \{0\}$ is such a face of $[0,r]^k \times [0,s]^{d-k}$) will be the $\{L_i\}_i$ (except for those faces which are contained in $U$ or $\partial V$, that is, we only consider faces which ``connect'' between $U$ and $\partial V$), chosen in a suitable order. Such an order is, for instance, obtained as follows.
Associate to each face the index $z \in \{1,\dots,2d\}$ of the zero-length coordinate and the set $J \subset \{1,\dots,2d\}$ of coordinates in which the face has side-length at least $s$ (e.g., $[0,s]^k \times [0,r]^{d-k-1} \times \{0\}$ has $z=2d$ and $J=\{1,\dots,k\}$). Note that the adjacency relation induced on the set of faces with a given $J$ and $z$ is that of (disjoint copies of) a $|J|$-dimensional cube. We let $p=(p_1,\dots,p_{|J|})$ denote the face's position in this $|J|$-dimensional grid (e.g., if $[0,s]^2 \times [0,r] \times \{0\}$ has $p=(0,0)$ then $[s,2s] \times [5s,6s] \times [0,r] \times \{0\}$ has $p=(1,5)$).
The faces are then ordered first in decreasing order according to $|J|$ (i.e., in increasing order according to the number of sides of length $r$), then in (say, increasing) order according to $z$ and then in lexicographical order according to $(p_1\text{ mod }2,\dots,p_{|J|}\text{ mod }2)$.
We complete this partial order to a linear order in an arbitrary manner.
This choice of ordering ensures that $i' \to i$ implies that the ``grid-distance'' between the faces $L_i$ and $L_{i'}$ (i.e., the $\ell_1$-distance between the $p$'s corresponding to the two faces) is at most $C'_d$.
In particular, since it is clear that $|I^v| \le C''_d$, we have $|\cI^v| \le C'''_d$. Since the faces have size at most $r(10s)^{d-2}$ and the connected components of $V_m$ have size at most $r(10s)^{d-1}$, we have $|\cL^v \cup \cC^v| \le C_d r^d$, as required.

The above construction is particularly simple (even somewhat degenerate) in two dimensions (see Remark~\ref{rem:two-dimensional-proof} below). However, writing it down precisely for $d \ge 3$ is a bit cumbersome and we omit the details.

We proceed to show that~\eqref{eq:small-effect} implies that $\pi_n$ satisfies~\eqref{eq:grand-coupling}. Towards showing this, let $(\omega^\tau)_\tau$ be sampled from $\pi_n$, let $\tau \in \Omega_{\partial V}$ and $A \subset \partial V$, and define
\[ \cT := \big\{ \tau' \in \Omega_{\partial V} : \tau_{\partial V \setminus A}=\tau'_{\partial V \setminus A} \big\} \qquad\text{and}\qquad \cW := \Big\{ v \in V : \omega^\tau_v=\omega^{\tau'}_v\text{ for all }\tau' \in \cT \Big\}.\]
We must show that
\[ \E|\cW| \ge |V| - |A| \cdot C_d r^d - |V| \cdot \rho_*(r) .\]
This will follow if we show that, for some $W \subset V$,
\begin{equation}\label{eq:W}
|W| \ge |V| - |A| \cdot C_d r^d \qquad\text{and}\qquad \Pr(W \subset \cW) \ge 1 - |V| \cdot \rho_*(r) .
\end{equation}
Denote $I^A := \bigcup_{v \in A} I^v$, $\cI^A := \bigcup_{v \in A} \cI^v$, $\cL^A := \bigcup_{v \in A} \cL^v$ and $\cC^A := \bigcup_{v \in A} \cC^v$.
Let $\cC_1,\dots,\cC_k$ be the connected components of $V_m$ which are not adjacent to $\cL^A \cup A$. Note that 
\[ \bigcup_{j=1}^k \cC_j = V_m \setminus \cC^A .\]
Set
\[ W := V \setminus (\cL^A \cup \cC^A) \]
and note that~\eqref{eq:small-effect} implies the first part of~\eqref{eq:W}.
By~\eqref{eq:coupling-holes}, we have
\[ E \cap \bigcap_{i \in I} E_i^\cT \subset \big\{ W \subset \cW \big\} ,\qquad\text{where }I := \{1,\dots,m\} \setminus \cI^A .\]
Since $\Pr(E^c) \le |U| \cdot \rho_*(r)$ by~\eqref{eq:coupling-E-prob}, the second part of~\eqref{eq:W} will follow if we show that, for any $i \in I$,
\[ \Pr\Big((E_i^\cT)^c \mid E\text{ occurs and }E_{i'}\text{ occurs for all }i' \in I\text{ with }i'<i\Big) \le |L_i| \cdot \rho_*(s) .\]
Indeed, this follows from~\eqref{eq:coupling-E_i-prob}, since $\dist(L_i,A) \ge s$ and $\Sigma_\cT \subset A$, and since, if $1 \le i'<i$ satisfies $\dist(L_i,L_{i'})<s$, then $i' \in I$ (otherwise $i'' \to i' \to i$ for some $i'' \in I^A$, implying that $i \in \cI^A$).
\end{proof}

\begin{remark}\label{rem:two-dimensional-proof}
	In the two-dimensional case, the proof of Proposition~\ref{prop:grand-coupling-exists} is slightly simplified due to the fact that the constructed collection $\{L_i\}_i$ satisfies $\dist(L_i,L_{i'}) \ge s$ whenever $i \neq i'$, except when $L_i$ and $L_{i'}$ are both at one of the same four corners (see Figure~\ref{fig:coupling-a}). In fact, one may just as well remove the eight $L_i$ which are at the corners so that this holds for all $i \neq i'$. Thus, regardless of the order in which $(L_i)_i$ is chosen, we have $i' \to i$ if and only if $i'=i$, and the event on the right-hand side of~\eqref{eq:coupling-E_i-prob} is just $E$. On the other hand, although these are convenient simplifications, they are not of significant importance. Instead, what is important in two dimensions is that in order to deduce~\eqref{eq:Q_i-coupling}, we only need exponential strong spatial mixing for a restricted class of sets $(U,V)$ and boundary conditions $(\tau,\tau')$, that is, we only need to assume that~\eqref{eq:SSM-def} holds for certain $(U,V)$ and $(\tau,\tau')$. Indeed, the proof of Proposition~\ref{prop:SSM-implies-ratio-SSM} reveals that if one only assumes exponential strong spatial mixing for $(U,V)=(\{0\}\times S_r,S_r \times S_r)$ and $(\tau,\tau')$ for which $\Sigma_{\tau,\tau'} \subset \{-r-1,r+1\}\times S_r$, where $S_r := \{-r,\dots,r\}$, then one obtains as a consequence exponential ratio strong spatial mixing for $(U,V)=(\{0\}\times S_r,S_{2r+1} \times S_r)$ and $(\tau,\tau')$ for which $\Sigma_{\tau,\tau'} \subset \{-2r-2,2r+2\}\times S_r$ (see Figure~\ref{fig:ratio-mixing-box}).	
	Finally, this is all that is needed to obtain~\eqref{eq:Q_i-coupling}, so that the conclusion of Proposition~\ref{prop:grand-coupling-exists} remains true under this weaker hypothesis.
\end{remark}

\section{An obstruction for \ffiid}
\label{sec:no-ffiid}

In this section, we prove Theorem~\ref{thm:no-ffiid}.

\begin{proof}[Proof of Theorem~\ref{thm:no-ffiid}]
Let $X$ and $X'$ be as in the theorem.
Denote the distributions of $X$ and $X'$ by $\mu$ and $\nu$.
We write $\omega$ for a generic random element of $\Omega$.
Let $A$ be a cylinder event satisfying $\mu(A) < \nu(A)$, which is determined by the values of $\omega$ on a box $\Lambda_k$.
For $n>k$, denote
\[ Z_n := \frac{1}{|\Lambda_{n-k}|} \sum_{v \in \Lambda_{n-k}} \1_A\big((\omega_{u+v})_{u \in \Z^d}\big) .\]
Note that $Z_n$ is measurable with respect to $\omega_{\Lambda_n}$.
By the pointwise ergodic theorem,
\begin{equation}\label{eq:A-has-prob-1}
 \mu\big(Z_n \to \mu(A)\text{ as }n \to \infty\big)=1 \qquad\text{and}\qquad \nu\big(Z_n \to \nu(A)\text{ as }n \to \infty\big)=1 .
\end{equation}
Assume towards a contradiction that $X$ is \ffiid.
Then the random field $W=(W_v)_{v \in \Z^d}$ defined by $W_v :=\1_A\big((X_{u+v})_{u \in \Z^d}\big)$ is also \ffiid, so that the convergence in the ergodic theorem occurs at an exponential rate for $W$~\cite{bosco2010exponential} (see also~\cite{van1999existence,marton1994}). Hence, denoting $a := \frac12(\mu(A)+\nu(A))$,
\[ \mu(Z_n \ge a) \le Ce^{-2cn^d} \qquad\text{for some }C,c>0\text{ and for all }n>k .\]
By Markov's inequality,
\[ \Pr\big(X_{\partial \Lambda_n} \in T_n\big) \le Ce^{-cn^d}, \qquad\text{where }T_n := \Big\{ \tau \in \Omega_{\partial \Lambda_n} : P^\tau_{\Lambda_n}(Z_n \ge a) \ge e^{-cn^d} \Big\} .\]
Thus, since $\Pr\big(X'_{\partial \Lambda_n} \in T_n\big)=1-o(1)$ by~\eqref{eq:A-has-prob-1}, we have reached a contradiction to~\eqref{eq:no-ffiid-assumption}.
\end{proof}

\bibliographystyle{amsplain}
\bibliography{library}

\end{document}